\documentclass[11pt]{article}
\usepackage{graphicx}
\usepackage{amsmath}
\usepackage{amssymb}
\usepackage{theorem}
\usepackage[pdftex]{color}
\usepackage{soul}
\usepackage{enumerate}
\usepackage{todonotes}

   \usepackage{hyperref}

\sloppy

\pagestyle{plain}

\numberwithin{equation}{section}

\textheight=8.5in
\textwidth=6.0in
\addtolength{\topmargin}{-.3in}
\addtolength{\oddsidemargin}{-.25in}

\newtheorem{thm}{Theorem}[section]
\newtheorem{ltheorem}{Theorem} 
\newtheorem{lemma}[thm]{Lemma}
\newtheorem{prop}[thm]{Proposition}
\newtheorem{cor}[thm]{Corollary}
{\theorembodyfont{\rmfamily}
\newtheorem{defn}[thm]{Definition}

\newtheorem{rmk}[thm]{Remark}
}

\DeclareFontFamily{OML}{rsfs}{\skewchar\font'177}
\DeclareFontShape{OML}{rsfs}{m}{n}{ <5> <6> rsfs5 <7> <8> <9> rsfs7
  <10> <10.95> <12> <14.4> <17.28> <20.74> <24.88> rsfs10 }{}
\DeclareMathAlphabet{\mathfs}{OML}{rsfs}{m}{n}

\newcommand{\qed}{\hfill \mbox{\raggedright \rule{.07in}{.1in}}}
 
\newenvironment{proof}{\vspace{1ex}\noindent{\bf
Proof}\hspace{0.5em}}{\hfill\qed\vspace{1ex}}
\newenvironment{pfof}[1]{\vspace{1ex}\noindent{\bf Proof of
#1}\hspace{0.5em}}{\hfill\qed\vspace{1ex}}

\newcommand{\R}{{\mathbb R}}

\newcommand{\Z}{{\mathbb Z}}

\newcommand{\bbS}{{\mathbb S}}

\newcommand{\cC}{{\mathcal C}}
\newcommand{\cN}{{\mathcal N}}
\newcommand{\cR}{{\mathcal R}}

\newcommand{\mfC}{{\mathfs C}}
\newcommand{\mfD}{{\mathfs D}}
\newcommand{\mfG}{{\mathfs G}}
\newcommand{\mfS}{{\mathfs S}}

\newcommand{\tS}{{\widetilde S}}

\newcommand{\hj}{{\hat j}}
\newcommand{\hE}{{\widehat E}}
\newcommand{\hW}{{\widehat W}}

\newcommand\x{{\bf x}}

\newcommand{\eps}{\epsilon}
\newcommand{\ve}{\varepsilon}
\newcommand{\Deg}{\operatorname{Deg}}
\newcommand{\Reg}{\operatorname{Reg}}

\newcommand{\Lip}{\operatorname{Lip}}
\newcommand{\dist}{\operatorname{dist}}
\newcommand{\Int}{\operatorname{int}}

\newcommand{\SMALL}{\textstyle}
\title{Superdiffusive central limit theorems for \\ geodesic flows on nonpositively curved surfaces}

\author{Yuri Lima
\thanks{Instituto de Matemática e Estatística, Universidade de São Paulo, Rua do Matão, 1010, Cidade Universitária, 05508-090, São Paulo -- SP, Brazil.
Email: yurilima@gmail.com}
\and Carlos Matheus
\thanks{CNRS \& \'Ecole Polytechnique, CNRS (UMR 7640), 91128, Palaiseau, France.
Email: carlos.matheus@math.cnrs.fr}
 \and Ian Melbourne
  \thanks{Mathematics Institute, University of Warwick, Coventry CV4 7AL, United Kingdom.
 Email: i.melbourne@warwick.ac.uk
}}

\date{11 December 2025}

\begin{document}

\maketitle

\begin{abstract}
We prove a nonstandard central limit theorem and weak invariance principle,
with superdiffusive normalisation $(t\log t)^{1/2}$, for geodesic flows on a class of nonpositively curved
surfaces with flat cylinder. We also prove that correlations decay at rate~$t^{-1}$.
An important ingredient of the proof, which is of independent interest, is an improved
results on the regularity of the stable/unstable foliations induced by the Green bundles.
\end{abstract}

\tableofcontents

 \section{Introduction} 
 \label{sec-intro}

It is well-known that geodesic flows on negatively curved closed manifolds have very strong statistical properties.
Ergodicity with respect to volume was proved for surfaces in~\cite{Hopf39} and for general dimension in~\cite{Anosov67}. The Bernoulli property was shown in~\cite{OrnsteinWeiss73,Ratner74}. Statistical limit laws such as the central limit theorem (CLT), the weak invariance principle (WIP), also known as the functional CLT, and the almost sure invariance principle quickly followed~\cite{DenkerPhilipp84,Ratner73}.
In major breakthroughs, exponential decay of correlations was established in~\cite{Dolgopyat98} for surfaces and~\cite{Liverani04} in general dimension.

In contrast, there are few results on statistical properties beyond the Bernoulli property for 
geodesic flows on nonpositively curved manifolds when there exist points of zero curvature.
In~\cite{LMM24}, we initiated the systematic study of such properties for geodesic flows.
In particular, we considered a class of closed surfaces with one flat geodesic
and proved polynomial decay of correlations and various statistical limit laws including the CLT and WIP
(with standard normalisation).
In this paper, we provide examples of geodesic flows on nonpositively curved surfaces
that satisfy a CLT and WIP with \emph{nonstandard normalisation}.

Our approach to studying geodesic flows on nonpositively curved manifolds resembles that for (semi)dispersing billiards~\cite{ChernovMarkarian}.
General results on statistical limit laws and decay of correlations seem infeasible,
but it is possible to analyse interesting classes of examples.
Moreover, whereas rigorous results for dispersing billiards have been restricted to planar billiards due to the complicated structure of singularities under iteration, there is in principle no such restriction for geodesic flows since they are smooth.
On the other hand, the prerequisites for the study of dispersing billiards are firmly established. 
For geodesic flows on nonpositively curved manifolds, the corresponding prerequisites are still under development, so for the moment we focus on surfaces too.
A case in point is the smoothness of the foliations induced by the Green bundles and the necessity
to extend existing results such as those in~\cite{GerberNitica99,GerberWilkinson99}, see Section~\ref{sec:GW}.

\vspace{1ex}
The class of geodesic flows that we study in this paper is as follows.
Let $r\in[5,\infty)$ and fix $0<L<\ve_0$.
Define $\cR$ to be the surface of revolution
with profile~$\xi$ given by
$\xi(s)=1$ for $|s|\leq L$ and $\xi(s)=1+(|s|-L)^r$ for $L\leq |s|\leq \ve_0$.
We form 
a closed $C^4$ Riemannian surface $S$ of nonpositive curvature by isometrically gluing two negatively curved surfaces with boundary to the circles $\beta_\pm$ bounding $\cR$,
see Figure~\ref{fig:surface-intro}.
We call $S$ a \emph{surface with flat cylinder $\cC=[-L,L]\times\bbS^1$}, where
$\bbS^1=\R/(2\pi\Z)$.
Away from~$\cC$ the surface $S$ has negative curvature.

\begin{figure}[hbt!]
\centering
\def\svgwidth{11cm}
   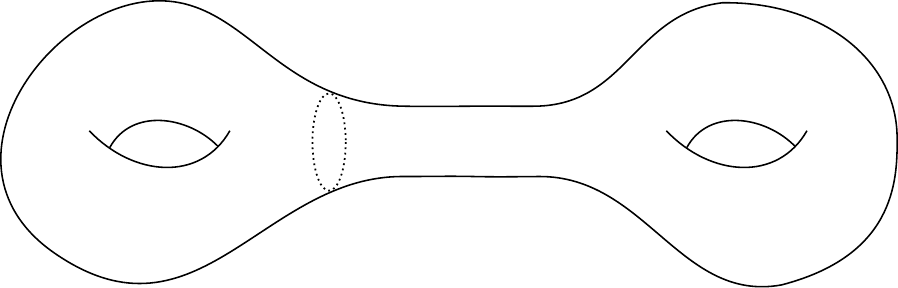
\caption{Surface
with flat cylinder $\cC$.
The region $\cR$ between the two curves $\beta_\pm$ is a surface of revolution
with profile $\xi$.}
\label{fig:surface-intro}
\end{figure}

Let $g_t:M\to M$ denote the geodesic flow on the unit tangent bundle
$M=T^1S$, and let $\mu$ denote the normalised Riemannian volume on the 
three-dimensional manifold $M$. Then $\mu$ is an ergodic $g_t$-invariant
probability measure \cite{Pesin77b}. 

The flat cylinder $\cC$ is foliated by two families of periodic orbits with clockwise/counterclockwise orientation.
Let $v:M\to\R$ be H\"older with $\int_M v\,d\mu=0$. We define $I_v=\alpha_0^2+\alpha_\pi^2$ where
$\alpha_0$ and $\alpha_\pi$ are the averages of $v$ over these two families of periodic orbits.
(The notation derives from the explicit formulas~\eqref{eq:alpha} given in Section~\ref{sec:WIP}.)
For $t>0$, define $v_t=\int_0^t v\circ g_s\,ds:M\to\R$.
Our first main result is a CLT for $v_t$ with standard normalisation $t^{1/2}$ or nonstandard normalisation $(t\log t)^{1/2}$ depending on whether $I_v=0$ or $I_v>0$.

\begin{ltheorem}[Nonstandard/Standard CLT] \label{thm:CLT}
If $I_v>0$, then $v$ satisfies a nonstandard CLT. That is,
$(t\log t)^{-1/2}v_t \to_d N(0,\sigma_v^2)$ as $t\to\infty$,
where $\sigma_v^2=b_SI_v>0$ and $b_S$ is a positive constant depending only on the surface $S$.

If $I_v=0$, then there exists $\sigma^2\ge0$, typically nonzero, such that
$v$ satisfies a standard CLT. That is,
$t^{-1/2}v_t \to_d N(0,\sigma^2)$ as $t\to\infty$.
\end{ltheorem}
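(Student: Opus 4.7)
Following the strategy of~\cite{LMM24} and the nonstandard CLT for the Bunimovich stadium due to B\'alint and Gou\"ezel, I realise $g_t$ as a suspension semiflow over a uniformly hyperbolic base map $F:X\to X$ with roof $\varphi:X\to(0,\infty)$. The cross-section is placed just outside the flat cylinder~$\cC$, so that $\varphi(x)$ is essentially the time that the $g_t$-orbit of $x$ spends inside $\cC$ before returning. The improved regularity of the Green bundle foliations proved in Section~\ref{sec:GW} allows one to verify Young's tower conditions for $F$, and in particular to deduce exponential decay of correlations for H\"older observables on $(X,\mu_X)$.

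The key step is the tail analysis of $\varphi$ and of the induced observable $\hat v(x)=\int_0^{\varphi(x)}v(g_sx)\,ds$. An orbit entering $\cC$ with small angle $\theta$ to the generator circles traverses $\cC$ in time of order $1/|\theta|$, and the polynomial vanishing of the curvature at $|s|=L$ contributes corrections that do not affect the leading order. A direct geometric computation yields
\[
\mu_X(\varphi>n)\sim cn^{-2}\quad(n\to\infty),
\]
with two contributing wings corresponding to the clockwise/counterclockwise orientations of near-periodic orbits in $\cC$. By H\"older continuity of $v$, on the long-return set one has $\hat v(x)=\alpha_\pm(x)\varphi(x)+O(1)$, where $\alpha_\pm(x)\in\{\alpha_0,\alpha_\pi\}$ according to the orientation of the periodic orbit shadowed by $x$. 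Consequently $\hat v$ inherits the $n^{-2}$ tail, with its two wings weighted by $\alpha_0^2$ and $\alpha_\pi^2$.

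Assuming $I_v>0$, the nonstandard CLT for hyperbolic systems with observables having such ``boundary-tail'' behaviour (in the lineage of B\'alint--Gou\"ezel and subsequent suspension-flow refinements) gives
\[
(n\log n)^{-1/2}\textstyle\sum_{k=0}^{n-1}\hat v\circ F^k\to_d N(0,\tilde\sigma^2),\qquad\tilde\sigma^2=c'(\alpha_0^2+\alpha_\pi^2),
\]
with $c'$ an explicit structural constant of $S$. The passage from this discrete-time statement to the continuous-time flow CLT is routine since $\bar\varphi:=\int\varphi\,d\mu_X<\infty$ and the contribution of the incomplete last return is $o_p((t\log t)^{1/2})$; one obtains the desired superdiffusive CLT with $\sigma_v^2=b_SI_v$ and $b_S=c'/\bar\varphi$.

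When $I_v=0$ the leading term $\alpha_\pm\varphi$ of $\hat v$ vanishes, a finer expansion shows $\hat v\in L^2(X)$, and the standard CLT for hyperbolic suspension flows with square-integrable induced observable (as in~\cite{LMM24}) gives $t^{-1/2}v_t\to_d N(0,\sigma^2)$ with the usual Green--Kubo variance. The principal obstacle is the sharp tail analysis: one must pin down the precise $n^{-2}$ tail of $\varphi$ with its correct coefficient, and establish the approximation $\hat v\approx\alpha_\pm\varphi$ with remainder uniformly $O(1)$ on long returns. This requires uniform control on the deviation of orbits from the periodic orbits they shadow while crossing $\cC$, and makes essential use of the improved regularity of the Green bundles proved in Section~\ref{sec:GW}.
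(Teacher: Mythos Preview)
Your overall architecture is close to the paper's, but two genuinely nonroutine ingredients are missing.

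First, the nonstandard CLT you invoke as a black box (in the B\'alint--Gou\"ezel / B\'alint--Chernov--Dolgopyat lineage) requires, in addition to the tail asymptotic, a pair-decorrelation estimate of the form $\mu(R_0=k,\,R_0\circ f^n=\ell)\ll k^{-(2+\eps)}\ell^{-(2+\eps)}$ uniformly in $n$. This is not a consequence of exponential mixing of the tower (since $R_0$ is unbounded) and must be verified directly using the standard-families/$Z$-function machinery built on the Chernov axioms; the paper does this in Lemmas~\ref{lem:abstract-pairs} and~\ref{lem:pairs}. Your proposal does not mention this condition.

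Second, the passage from the discrete-time nonstandard CLT to the flow is \emph{not} routine. For the standard CLT this is indeed classical (Melbourne--T\"or\"ok), but with the $(n\log n)^{1/2}$ normalisation the random time change interacts nontrivially with the slowly varying $\log n$; among other things one needs a separate nonstandard CLT for the return time itself (Corollary~\ref{cor:RCLT}). The paper handles this by inserting an intermediate global Poincar\'e map $g=g_h:\Sigma\to\Sigma$ with \emph{bounded} H\"older roof $h$ (so $\Sigma\supset\Sigma_0$ includes a section $\Omega_0$ inside the cylinder), writing $f=g^R$ with $R:\Sigma_0\to\Z^+$ discrete, and invoking~\cite[Theorem~5.1]{LMMsub}, an abstract result devoted precisely to this lifting step. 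Finally, your remainder estimate $\hat v-\alpha_\pm\varphi=O(1)$ is too optimistic for general H\"older $v$: the paper only obtains $V-JR_\cC=O(R_\cC^{1-\eta})+O(R_\cN)\in L^p$ for some $p>2$ (Lemma~\ref{lem:V}), which is exactly the hypothesis that~\cite[Theorem~5.1]{LMMsub} consumes.
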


\begin{rmk} As in~\cite[Remark~4.6]{LMMsub}, the word ``typically'' is interpreted here in the very strong sense that $\sigma^2=0$ only within a closed subspace of infinite codimension amongst H\"older observables $v$ with $\int_M v\,d\mu=I_v=0$. See for example the discussion in~\cite[End of Section~4]{HM07}.

We note that the vanishing of $\int_Mv\,d\mu$ is ``without loss of generality'' since one can always centre the observable $v$ by considering $v-\int_Mv\,d\mu$.
The vanishing of $I_v$ is ``codimension two'' since $\alpha_0=\alpha_\pi=0$
may occur in generic two-parameter families. In contrast, the degenerate case $\sigma^2=0$ 
requires infinitely many coincidences beyond the constraint $I_v=0$.
\end{rmk}

In the case $I_v>0$, we also prove a nonstandard WIP (which implies the CLT).
Define the sequence of continuous functions 
\[
W_n\in C[0,1], \qquad W_n(t)=(n\log n)^{-1/2}v_{nt},\quad n\ge1.
\]
Since each function $W_n$ depends on the initial condition in the probability space $(M,\mu)$, we
can regard $W_n$ as a random element in the metric space $(C[0,1],\|\;\|_\infty)$.

\begin{ltheorem}[Nonstandard WIP] \label{thm:WIP}
If $I_v>0$, then $v$ satisfies a nonstandard WIP. That is,
$W_n \to_w W$ in $C[0,1]$ as $n\to\infty$,
where $W$ is a Brownian motion with variance $\sigma_v^2$. 
\end{ltheorem}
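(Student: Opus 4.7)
My plan is to reduce the WIP for the geodesic flow to a discrete-time WIP for a suitable Poincar\'e return map and then lift back to the flow. Since the $C[0,1]$-valued process $W_n$ converges weakly to Brownian motion iff the finite-dimensional distributions converge and the family $\{W_n\}$ is tight, my main task is to establish both of these for the discrete counterpart and then transfer.

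First I would fix a Poincar\'e cross-section $\Sigma$ transverse to $g_t$, chosen compatible with the symbolic/Young-tower structure already developed in~\cite{LMM24} and the companion paper~\cite{LMMsub}, so that the return map $F:\Sigma\to\Sigma$ with return time $\tau:\Sigma\to(0,\infty)$ is nonuniformly hyperbolic with good quasi-compact transfer operator properties. Set $V(x)=\int_0^{\tau(x)} v(g_s x)\,ds$. The point is that the orbits of $g_t$ which accumulate ``atypical'' contributions to $v_t$ are precisely those excursions that enter the flat cylinder $\cC$ and shadow one of the two families of periodic geodesics. For an excursion spending total time $T$ in $\cC$ close to the clockwise family (resp.\ counterclockwise family), the Hölder regularity of $v$ together with the uniform proximity to a periodic orbit gives
\[
\int_0^T v\circ g_s\,ds \;=\; T\alpha_0+O(1) \quad\text{(resp.}\;T\alpha_\pi+O(1)),
\]
where the $O(1)$ error is summable along the excursion. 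Hence the distribution of $V$ under the invariant measure on $\Sigma$ inherits a heavy tail from the distribution of excursion lengths.

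Next I would establish the polynomial tail for the excursion time into~$\cC$. The flat cylinder $\cC=[-L,L]\times\bbS^1$ carries a flat metric, so a geodesic entering near the boundary with transverse angle~$\theta$ stays inside $\cC$ for time of order $1/\theta$. Integrating against the invariant measure yields $\mu\{x:\tau(x)\ge T\}\sim c\,T^{-2}$, which sits exactly on the boundary of the finite-variance regime and generates the $\log$-correction. Combining with the shadowing estimate above, one obtains that $V\in L^p$ for all $p<2$ and that its truncated variance satisfies $\int V^2\mathbf{1}_{\{|V|\le T\}}\,d\mu\sim(b_S I_v)\log T$, the two orbit families contributing the independent constants $\alpha_0^2$ and $\alpha_\pi^2$ which sum to $I_v$.

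With this tail estimate in hand, the WIP for the discrete Birkhoff sums $S_n V=\sum_{k=0}^{n-1}V\circ F^k$ with normalisation $(n\log n)^{1/2}$ follows from the general functional limit theorems for Young towers (or Gibbs-Markov systems) with regularly-varying, infinite-but-marginally-so second moment; the relevant results are in the spirit of Aaronson--Denker and are available in the form needed, e.g., via martingale coboundary decompositions combined with a truncation argument. Finally, the passage from the discrete WIP to the WIP for the flow uses the standard suspension-semiconjugacy argument (as in Melbourne--T\"or\"ok), noting that the return time has finite mean so time normalisation is unchanged, and that $\log(n\bar\tau)$ and $\log n$ are asymptotically equivalent.

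The hard part, and where the bulk of the work will go, is quantifying the approximation of excursion contributions by $T\alpha_0$, $T\alpha_\pi$ with errors uniform enough to be negligible after dividing by $(n\log n)^{1/2}$; this in turn depends on the improved regularity of the Green-bundle foliations announced in the abstract and developed in Section~\ref{sec:GW}, since one needs the stable/unstable holonomies inside $\cC$ to be regular enough to couple typical orbits with the idealised periodic-orbit model. Once that input is in place, the tail calculation, the discrete WIP, and the lift to the flow should each be comparatively routine.
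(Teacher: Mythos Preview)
Your overall architecture---reduce to a Poincar\'e map, identify the heavy-tailed induced observable $V\approx T\alpha_{0/\pi}$, establish tail asymptotics, prove a discrete limit theorem, and lift to the flow---matches the paper's. But there is a genuine gap at the step you treat as routine.

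You write that the discrete nonstandard WIP ``follows from the general functional limit theorems for Young towers \dots\ in the spirit of Aaronson--Denker \dots\ via martingale coboundary decompositions combined with a truncation argument.'' No such off-the-shelf result applies here. Since $V\notin L^2$, the usual martingale-coboundary route is unavailable, and truncation forces you to control correlations of the truncated pieces uniformly in the truncation level. The paper does this (following B\'alint--Chernov--Dolgopyat) by proving a \emph{joint} tail estimate
\[
\mu_{\Sigma_0}(R_\cC=k,\ R_\cC\circ f^n=\ell)\ll k^{-3}\ell^{-3}
\]
uniformly in $n$ (Lemma~\ref{lem:pairs}), which feeds into the abstract nonstandard CLT of Section~\ref{sec:CLT} (condition~\eqref{eq:JR2}). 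This estimate is obtained via Chernov's standard-families machinery and is not a consequence of the one-variable tail $\mu(\tau\ge T)\sim cT^{-2}$ alone. Your proposal never mentions this ingredient; without it the discrete limit theorem does not follow.

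You also misplace where the Green-bundle regularity enters. The approximation $V=JR_\cC+O(R_\cC^{1-\eta})+O(R_\cN)$ (Lemma~\ref{lem:V}) is a Riemann-sum argument using only that $v$ is H\"older; it does not need the foliation results. The improved regularity (Theorems~\ref{thm:GW1} and~\ref{thm:GW2}) is instead needed to verify Chernov axioms (A5) and (A7), which in turn underpin the standard-families argument yielding the joint tail bound above. So the ``hard part'' is not the shadowing estimate but the decorrelation/pairs estimate, and the foliation regularity feeds in there, not where you indicated.

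Finally, note the paper's two-level structure: a global Poincar\'e map $g:\Sigma\to\Sigma$ with \emph{bounded} roof $h$, and a first-return map $f=g^R:\Sigma_0\to\Sigma_0$ to a subsection. The nonstandard CLT is proved for $JR_\cC$ and for $R$ on $\Sigma_0$; the passage to the flow WIP is then a single appeal to \cite[Theorem~5.1]{LMMsub}, which packages both the discrete-to-flow lift and the upgrade from CLT to WIP. Your single unbounded-roof section would require reworking that black box.
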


There are only a few situations in deterministic dynamical systems where 
the nonstandard CLT/WIP is known to hold.
The first example was for intermittent maps $f$ satisfying $f(x)=x+bx^{3/2}$ at a neutral fixed point $x=0$; see~\cite{Gouezel04} for the nonstandard CLT and~\cite{DedeckerMerlevede09} for the nonstandard WIP.
For various billiard examples, namely the infinite horizon Lorentz gas, Bunimovich stadium and billiards with cusps, we refer to~\cite{BalintChernovDolgopyat11,BalintGouezel06,LMMsub,SzaszVarju07}.

Our proofs of Theorems~\ref{thm:CLT} and~\ref{thm:WIP} closely follows abstract arguments in~\cite{BalintChernovDolgopyat11} and~\cite{LMMsub}, using the latter to reduce to a nonstandard CLT for a simpler class of observables, and using the former to establish this simplified nonstandard CLT.

Our final main result concerns the rate of decay of correlations for the flow $g_t$.

\begin{ltheorem} \label{thm:decay}
 Let $v,\,w:M\to\R$ be sufficiently smooth observables. Then there is a constant
$C>0$ such that 
$$
\left|\int_M v \cdot (w\circ g_t)\,d\mu-\int_Mv\,d\mu\int_Mw\,d\mu\right|
\leq C t^{-1}
\quad\text{for all $t>0$}.
$$
\end{ltheorem}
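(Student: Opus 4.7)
The plan is to reduce the continuous-time statement to a discrete-time renewal-theoretic computation. First, I would choose a smooth cross-section $\Sigma\subset M$ transverse to $g_t$ and lying outside the flat cylinder $\cC$, with first-return map $F\colon\Sigma\to\Sigma$ and roof function $\tau\colon\Sigma\to(0,\infty)$, so that $g_t$ is (up to a standard suspension construction) the suspension semiflow over $(F,\tau)$. Using the improved regularity of the Green foliations provided by Section~\ref{sec:GW}, together with the tower construction already used in~\cite{LMM24}, I would realise $F$ as the first-return map of an abstract invertible Young tower with exponential return-time tails \emph{at the tower level} (not at the level of $\tau$). The ergodic $F$-invariant measure $\mu_\Sigma$ obtained from $\mu$ then inherits a Gibbs--Markov--Young structure from this tower.

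The key quantitative input is the tail of $\tau$. A geodesic entering $\cC$ nearly tangentially to the periodic foliation spends a long time inside, the sojourn diverging as the transverse angle tends to zero. For the profile $\xi(s)=1+(|s|-L)^r$ with $r\ge 5$, a direct calculation using the Clairaut first integral on the surface of revolution $\cR$ should yield the asymptotic
\[
\mu_\Sigma(\tau>n)\sim c\,n^{-2}
\]
for some $c>0$ depending on $S$. This critical exponent is precisely the one compatible with the superdiffusive normalisation in Theorem~\ref{thm:CLT}: the $\log t$ growth of the variance corresponds, via Karamata-type renewal estimates, to the logarithmic divergence of the truncated second moment $\int_{\{\tau\le n\}}\tau^2\,d\mu_\Sigma\sim c'\log n$.

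With this Young tower structure and the tail $\mu_\Sigma(\tau>n)\sim c n^{-2}$ in hand, the $O(n^{-1})$ decay of correlations for the discrete-time return map follows from the operator renewal theory of Gou\"ezel and the refinements by Melbourne--Terhesiu. Transferring this to the flow and obtaining $O(t^{-1})$ then requires a Dolgopyat-type bound on twisted transfer operators $L_s$ with $s\in i\R$, uniformly on a horizontal strip, which controls the oscillatory integrals arising from the inverse-Laplace representation of the correlation function; this follows the template of~\cite{BalintChernovDolgopyat11} for the infinite-horizon Lorentz gas, whose decay regime exactly matches ours. The main obstacle is pinning down the \emph{exact} $n^{-2}$ asymptotic in step~(ii), rather than merely two-sided bounds: the decay rate is extremely sensitive to the tail, and any logarithmic correction in $\mu_\Sigma(\tau>n)$ would propagate to a $t^{-1}\log t$ factor in the bound and would simultaneously distort the variance in Theorem~\ref{thm:CLT}. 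A change of coordinates adapted to the Clairaut integral on $\cR$, combined with the uniform transverse regularity coming from Section~\ref{sec:GW}, should deliver the required sharp asymptotics.
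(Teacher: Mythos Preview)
Your overall strategy---Young tower with exponential tails for a Poincar\'e map, $n^{-2}$ tail for the roof, then a flow-decay theorem with Dolgopyat input---matches the paper's. But there is a genuine gap and a couple of confusions.

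The gap concerns \emph{which} tail you compute. The Clairaut calculation you describe yields the tail of the roof $\tau$ on the section $\Sigma$; this is Lemma~\ref{lem:RC}(1) in the paper, and it does give a clean power law with no logarithmic correction. But the flow-decay machinery of \cite{BBM19} consumes the tail of the \emph{induced} roof $\varphi=\sum_{\ell=0}^{\rho-1}\tau\circ F^\ell$ on the Young-tower base $Y$, where $\rho:Y\to\Z^+$ is the exponential-tail tower return. The paper explicitly flags this passage as nontrivial, and its solution reverses the logical order you propose: first prove the nonstandard CLT for the discrete return $R$ (Corollary~\ref{cor:RCLT}), then deduce $\mu_Y(\varphi_*>n)\sim\bar\tau\sigma_R^2 n^{-2}$ via \cite[Lemma~4.1]{LMMsub} (this is Proposition~\ref{prop:tail}). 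So the ``main obstacle'' is not the Clairaut asymptotic---that part is routine---but this lifting step, which your outline does not address.

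Two smaller points. First, your $F$ (the paper's $f=g_\Upsilon:\Sigma_0\to\Sigma_0$) has \emph{exponential} decay of correlations, since it is modelled by an exponential Young tower; the $O(n^{-1})$ map-level decay belongs to the global Poincar\'e map $g=g_h$ on a larger section with bounded roof (Theorem~\ref{thm:decaymap}), which you never introduce, so it is unclear what your ``$O(n^{-1})$ decay for the discrete-time return map'' refers to. Second, the flow result to invoke is \cite[Theorem~6.4]{BBM19}, not \cite{BalintChernovDolgopyat11} (the latter concerns limit theorems, not decay); the Dolgopyat-type hypothesis in \cite{BBM19} is ``absence of approximate eigenfunctions'', and the paper verifies it via the contact structure of the geodesic flow, which you do not mention.
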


\begin{rmk} An observable is ``sufficiently smooth'' if it is $C^k$ in the flow direction for some $k\ge0$ that depends only on the surface $S$. In 
particular, if we choose the surface $S$ to be $C^\infty$ away from $\cC$, then any observable that is $C^\infty$ on $M$ and flat near $\{s=\pm L\}$ is sufficiently smooth.

The analogous result for the infinite horizon Lorentz gas and Bunimovich stadia  was obtained in~\cite{BBM19}.
\end{rmk}

\begin{rmk} \label{rmk:lower}
 The upper bound in Theorem~\ref{thm:decay} is sharp in the sense that if $v$ is H\"older with $\int_Mv\,d\mu=0$ and $I_v\neq0$,  then 
\[
t\int_M v \cdot (v\circ g_t)\,d\mu\not\to0
\quad\text{as $t\to\infty$}.
\]
This is a consequence of the nonstandard limit law in Theorem~\ref{thm:CLT}.
See~\cite[Corollary~1.3]{BalintGouezel06} or~\cite[Proposition~9.14]{BBM19}.
\end{rmk}

The remainder of this paper is organised as follows.
In Section \ref{sec:surfaces}, we review known facts about the geometry and dynamics of 
geodesic flows on nonpositively curved surfaces, with special attention to surfaces with flat cylinder.
In Section \ref{sec:R-dynamics}, we make a systematic study of the dynamics of the geodesic flow on the surface of revolution $\cR$.
In Section~\ref{sec:GW}, we prove sufficient regularity of the stable/unstable foliations
induced by the Green bundles, partly extending~\cite{GerberWilkinson99}.
In Section~\ref{sec:f}, we construct a uniformly hyperbolic first return map $f:\Sigma_0\to\Sigma_0$ satisfying the Chernov axioms~\cite{Chernov-1999}, showing that $f$ is modelled by a Young tower with exponential tails~\cite{Young98}.

In Section~\ref{sec:CLT}, we recall and slightly extend an abstract result on the nonstandard CLT due to~\cite{BalintChernovDolgopyat11}.
In Section~\ref{sec:WIP}, we prove Theorems~\ref{thm:CLT} and~\ref{thm:WIP}.
Finally, in Section~\ref{sec:decay} we prove Theorem~\ref{thm:decay} as well as a related result, Theorem~\ref{thm:decaymap}, for a suitable global Poincar\'e map $g$.
The Chernov axioms are stated for convenience in Appendix~\ref{app:C}.

\vspace{-2ex}
\paragraph{Notation}
We use ``big O'' and $\ll$ notation interchangeably, writing $a_n=O(b_n)$ or $a_n\ll b_n$
if there are constants $C>0$, $n_0\ge1$ such that
$a_n\le Cb_n$ for all $n\ge n_0$.
We write $a_n\approx b_n$ if $a_n\ll b_n$ and $b_n\ll a_n$, and 
$a_n\sim b_n$ if $a_n/b_n\to1$.

\section{Nonpositively curved surfaces}\label{sec:surfaces}
In this section, we recall some known facts from differential geometry, especially
for geodesic flows on nonpositively curved surfaces and for surfaces of revolution.
Standard references are \cite{Ballmann,Eberlein}. 
We also give a precise description
of the class of surfaces considered in this article, and then describe their properties
that will be used in the sequel.

\subsection{Geodesic flows}\label{sec:geodesic-flows}

Let $S$ be a $C^4$ closed Riemannian surface. Let $M=T^1S$ be its unit tangent bundle,
which is a closed three dimensional Riemannian manifold. 
The \emph{geodesic flow} on $S$ is the flow $g_t:M\to M$ defined by
$g_t(x)=\gamma'_x(t)$, where $\gamma_x:\R\to S$ is the unique geodesic such that $\gamma'_x(0)=x$.
The volume form on $S$ induces a smooth $g_t$-invariant probability measure $\mu$ on $M$.

For $p\in S$, let $K(p)$ be the Riemannian curvature at $p$.
We assume that $S$ has nonpositive Riemannian curvature: $K(p)\leq 0$ for all $p\in S$. Then we have a partition
$M=\Deg\cup\Reg$ 
into the \emph{degenerate set}\footnote{The classical literature uses the terminology
``singular set'' instead of ``degenerate set'', but here we reserve the term
``singular'' for the dynamical setting of the Chernov axioms.} $\Deg$ and the \emph{regular set} $\Reg$ 
defined by
\begin{align*}
\Deg & =\{x\in M:K(\gamma_x(t))=0\text{ for all }t\in\R\},
\\ 
\Reg & =\{x\in M:K(\gamma_x(t))<0\text{ for some }t\in\R\}.
\end{align*}

The dynamical properties of $g_t$ are usually studied via Jacobi fields.
We refer the reader to \cite[Sect.\ 2.1]{LMM24} for their definition and an
exposition of their basic properties, including how they define invariant subspaces
and foliations. Below, we just recall some of their properties.

Using Jacobi fields that remain bounded in positive/negative time, we can define
$dg_t$-invariant one-dimensional subspaces $\hE^{s/u}_x\subset T_xM$ for
each $x\in M$.\footnote{We reserve the notation $E^{s/u}_x$ for the stronger
notion of stable/unstable
subspace in the sense of hyperbolic dynamics, as described in Section
\ref{sec:f}.}
These are the {\em Green bundles}, introduced in \cite{Green58}.
As in \cite{LMM24}, we refer to $\hE^{s/u}_x$ as the stable/unstable
subspaces of $x\in M$. These subspaces are orthogonal to  the one-dimensional 
subspace $Z_x$ tangent to the geodesic flow.
Moreover, $\hE^s_x=\hE^u_x$ if and only if $x\in \Deg$; hence
 $\hE^s_x\oplus Z_x \oplus \hE^u_x=T_xM$ if and only if $x\in\Reg$. 

Using Busemann functions and horospheres \cite[Sect.\ 2.1]{LMM24},
for each $x\in M$ we define $g_t$-invariant $C^1$ curves $\hW^{s/u}_x$
tangent to $\hE^{s/u}_x$, called the \emph{stable/unstable manifolds} for the flow.

There is a natural metric on $M$,
called the \emph{Sasaki metric}, which is the product of horizontal and vertical vectors,
see e.g.\ \cite[Chapter 3, Exercise 2]{doCarmo}. 
For $\delta>0$ small, there is an equivalent version of the Sasaki metric \cite[\S 17.6]{Katok-Hasselblatt-Book},
which we call the \emph{$\delta$-Sasaki metric}.
In our calculations, we will fix a $\delta$-Sasaki
metric for $\delta$ small enough and denote it simply by $\|\cdot\|$.

\subsection{Surfaces of revolution}\label{sec:revolution}

Let $I\subset\R$ be a compact interval, and let $\xi:I\to\R$ be a positive $C^4$ function.
The \emph{surface of revolution}
defined by $\xi$ around the $x$ axis is the surface $\cR$ with global chart
$\Xi:I\times\bbS^1\to \R^3$ given by
$\Xi(s,\theta)=(s,\xi(s)\cos\theta,\xi(s)\sin\theta)$.
We collect some known facts about these surfaces, referring to \cite{doCarmo76} for the details.

By \cite[Example 4, p.\ 161]{doCarmo76},
the curvature at $p=\Xi(s,\theta)$ is equal to
\begin{equation} \label{eq:K}
K(p)=-\frac{\xi''(s)}{\xi(s)[1+(\xi'(s))^2]^2}\cdot
\end{equation}

Geodesics on surfaces of revolution 
satisfy the so-called \emph{Clairaut relation}.
Let $\gamma(t)=\Xi(s(t),\theta(t))$ be a geodesic, and let $\psi(t)\in\bbS^1$ be the angle that
the circle $s=s(t)$, more precisely its image under $\Xi$, makes with $\gamma$ at $\gamma(t)$.
The Clairaut relation states that the value
\begin{equation} \label{eq:Clairaut}
c=\xi(s(t))\cos\psi(t)=\xi(s(t))^2 \theta'(t)
\end{equation}
is constant along $\gamma$.
We call $c$ the \emph{Clairaut constant} of $\gamma$ and of all of its tangent vectors.

Let $\gamma(t)=\Xi(s(t),\theta(t))$ be a geodesic with Clairaut constant $c$.
Then $s(t)$ satisfies
the \emph{equation of geodesics} 
 \cite[Example 5,  p.\ 255]{doCarmo76},
\[
[1+\xi'(s)^2](s')^2+\frac{c^2}{\xi(s)^2}=1.
\]

For a fixed $s_0\in\R$, the curve $\Xi(s_0,\theta)$ is called a \emph{meridian}.
We fix an orientation of $\bbS^1$ and induce it on the meridians.
Observe that $\cR$ is diffeomorphic to $I\times\bbS^1$ and $T^1\cR$ is
diffeomorphic to $\cR\times \bbS^1\cong I\times\bbS^1\times\bbS^1$,
where $(p,\psi)\in \cR\times \bbS^1$ is identified to the unit tangent vector with basepoint 
$p$ that makes
an angle $\psi$ with the oriented meridian passing though $p$.
Writing $p=\Xi(s,\theta)$, we refer to the coordinates 
$(s,\theta,\psi)\in I\times\bbS^1\times \bbS^1$ as
\emph{Clairaut coordinates} on $T^1\cR$.

Using the Clairaut relation \eqref{eq:Clairaut}, we define
the \emph{Clairaut function} $c:T^1\cR\to\R$ by
$c(s,\theta,\psi)=\xi(s)\cos\psi$.

The \emph{Clairaut metric}
on $T^1\cR$ is the Riemannian metric given by the canonical product on
$I\times\bbS^1\times \bbS^1$; it is equivalent to the $\delta$-Sasaki metric $\|\cdot\|$.

\subsection{Surfaces with flat cylinder}\label{sec:our-surface}

We now define the class of surfaces that we are interested in in this paper.
They exhibit two special features:
the only region of zero curvature is a flat cylinder,
and on a neighbourhood of this cylinder the surface is a particular surface of revolution.

We begin by defining the surface of revolution.
Fix $r>4$ and $0<L<\ve_0$.
Consider the profile function
$\xi:[-\ve_0,\ve_0]\to\R$ given by
\[
\xi(s)=
\begin{cases}
1, & |s|\leq L\\
1+(|s|-L)^r, & |s|\geq L.
\end{cases}
\]
This defines a surface of revolution $\cR$ with boundary meridians
$\beta_\pm=\Xi(\pm\ve_0,\theta)$.
We call $\cC=[-L,L]\times\bbS^1$ the \emph{flat cylinder} and
$\cN=\cR\setminus\cC$ the \emph{neck}.

\begin{defn}
A $C^4$ surface of nonpositive curvature $S$
is a \emph{surface with flat cylinder} if $S$ contains a copy of $\cR$ and 
$K|_{S\setminus\cC}<0$.
See Figure~\ref{fig:surface}.
\end{defn}

\begin{figure}[hbt!]
\centering
\def\svgwidth{11cm}
   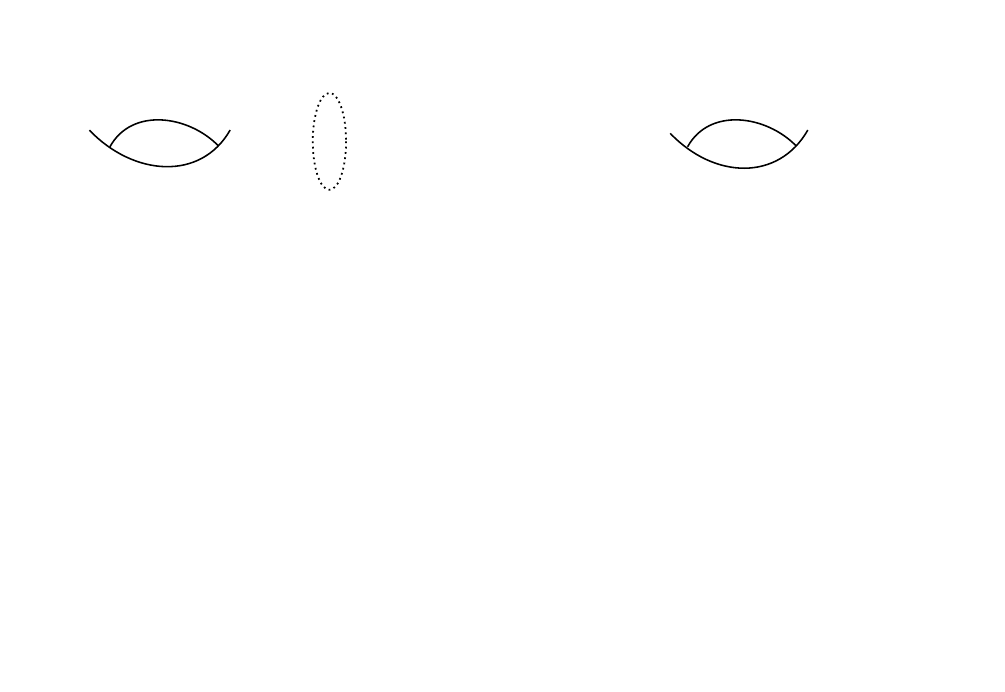
\caption{An example of a surface $S$ with flat cylinder.}\label{fig:surface}
\end{figure}

Such surfaces indeed exist, and can be obtained by interpolating the neck with
a hyperbolic surface ($K\equiv -1$) with one cusp on each side. Since near the cusp a hyperbolic
surface is a surface of revolution, it is enough to interpolate its profile function with the function $\xi$,
in a way that the resulting function is strictly convex for $s>L$. This can be done as in
\cite[Appendix A.2]{Donnay}, using a partition of unity.

In the sequel, we fix a surface $S$ with flat cylinder and $r>4$.
Eventually, we specialise to $r\ge5$ as in the introduction since this is required in Theorem~\ref{thm:GW1} below. All other arguments hold also for $r\in(4,5)$.

In the Clairaut coordinates, $\Deg$ is the union of
$[-L,L]\times \bbS^1\times\{0\}$ and
$[-L,L]\times \bbS^1\times\{\pi\}$.
In particular, $\mu(\Deg)=0$, 
so \cite{Pesin77b} implies that the flow $g_t$ is ergodic. In fact, $g_t$ is Bernoulli;
see \cite{Pesin77a} and
\cite[Thm.\ 12.2.13]{Barreira-Pesin-Non-Uniform-Hyperbolicity-Book} for the classical proofs,
and \cite{Ledrappier-Lima-Sarig} for a proof using symbolic dynamics.

\subsection{Asymptotic, bouncing and crossing geodesics/vectors}
\label{sec:asymptotic}

As in~\cite{LMM24}, we
use the Clairaut function to distinguish some vectors in $T^1\cN$ that will play a key role
in the next sections. 
Let $x=(s,\theta,\psi)\in T^1\cN$ such that 
the geodesic starting at $x$ points towards $\cC$.
The vector $x$, and the corresponding geodesic with initial condition $x$,  is called:
\begin{description}
\item[Asymptotic] if $c(x)=\pm 1$: the geodesic path $g_{[0,\infty)}(x)$ is
asymptotic to $\cC$.
\item[Bouncing] if $|c(x)|>1$: there is $t>0$ such that $\psi(t)=0$ or $\pi$,
i.e.\ the geodesic path $g_{[0,t]}(x)$ spirals towards $\cC$, $g_t(x)$
is tangent to a meridian, and after that the geodesic path spirals away from $\cC$ in the reverse direction.
In such cases, the geodesic does not reach~$\cC$.
\item[Crossing] if $|c(x)|<1$: there is $t>0$ such that $s(t)=0$,
i.e.\ the geodesic path $g_t(x)$ spirals into $\cC$, crosses $\cC$ from one side to the other,
and then spirals away from~$\cC$.
\end{description}
Figure~\ref{fig:geodesics} depicts the three possibilities.

\begin{figure}[hbt!]
\centering
\def\svgwidth{15cm}
\begingroup%
  \makeatletter%
  \providecommand\color[2][]{%
    \errmessage{(Inkscape) Color is used for the text in Inkscape, but the package 'color.sty' is not loaded}%
    \renewcommand\color[2][]{}%
  }%
  \providecommand\transparent[1]{%
    \errmessage{(Inkscape) Transparency is used (non-zero) for the text in Inkscape, but the package 'transparent.sty' is not loaded}%
    \renewcommand\transparent[1]{}%
  }%
  \providecommand\rotatebox[2]{#2}%
  \newcommand*\fsize{\dimexpr\f@size pt\relax}%
  \newcommand*\lineheight[1]{\fontsize{\fsize}{#1\fsize}\selectfont}%
  \ifx\svgwidth\undefined%
    \setlength{\unitlength}{1189.23887691bp}%
    \ifx\svgscale\undefined%
      \relax%
    \else%
      \setlength{\unitlength}{\unitlength * \real{\svgscale}}%
    \fi%
  \else%
    \setlength{\unitlength}{\svgwidth}%
  \fi%
  \global\let\svgwidth\undefined%
  \global\let\svgscale\undefined%
  \makeatother%
  \begin{picture}(1,0.27294678)%
    \lineheight{1}%
    \setlength\tabcolsep{0pt}%
    \put(0,0){\includegraphics[width=\unitlength,page=1]{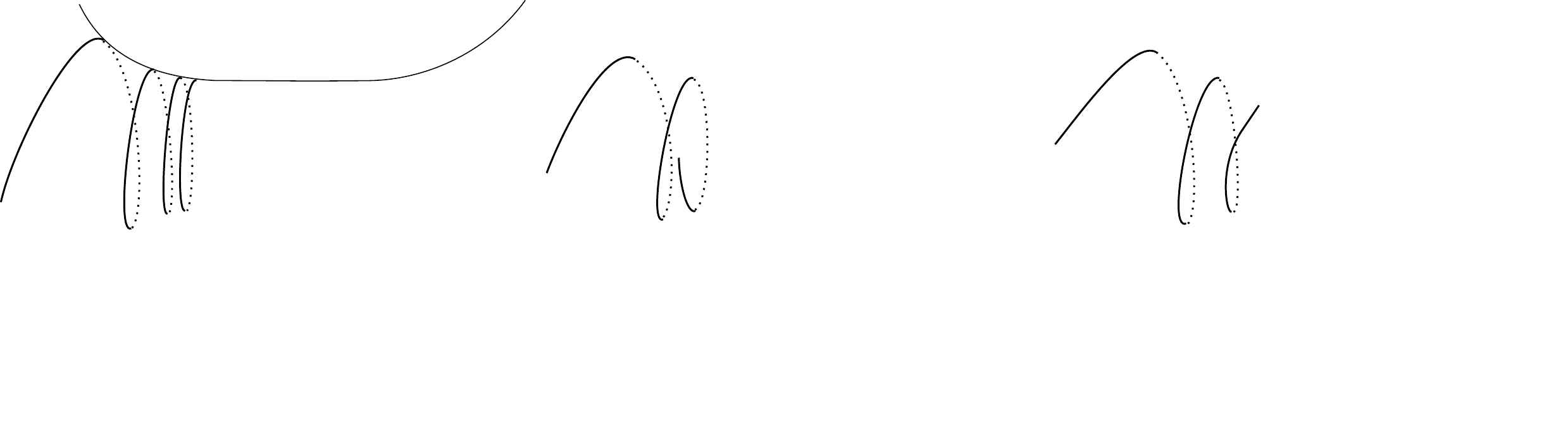}}%
    \put(0.15918733,0.00507479){\color[rgb]{0,0,0}\makebox(0,0)[lt]{\lineheight{1.25}\smash{\begin{tabular}[t]{l}(a)\end{tabular}}}}%
    \put(0.48962026,0.00507479){\color[rgb]{0,0,0}\makebox(0,0)[lt]{\lineheight{1.25}\smash{\begin{tabular}[t]{l}(b)\end{tabular}}}}%
    \put(0.82584358,0.00507479){\color[rgb]{0,0,0}\makebox(0,0)[lt]{\lineheight{1.25}\smash{\begin{tabular}[t]{l}(c)\end{tabular}}}}%
    \put(0,0){\includegraphics[width=\unitlength,page=2]{geodesics.pdf}}%
  \end{picture}%
\endgroup%

\caption{Geodesics that are (a) asymptotic, (b) bouncing, (c) crossing.}\label{fig:geodesics}
\end{figure}

\section{Dynamics of transitions in $\cR$}\label{sec:R-dynamics}

In this section, we initiate the study of the geodesic flow on a surface with flat cylinder as defined in Section~\ref{sec:our-surface}.
In particular, we describe in detail the transitions
in the surface of revolution $\cR$.

In Subsection~\ref{ss:Omega}, we construct a two-dimensional section $\Omega$, the \emph{transition section}, situated at (or near) the ends of $\cR$.
 We also recall an explicit formula from~\cite{LMM24} for 
the associated \emph{transition map} $f_0$.
In Subsection~\ref{ss:times}, we estimate transition times and derivatives of $f_0$.
The calculations in this section use the Clairaut coordinates
$x=(s,\theta,\psi)$ on $T^1\cR\cong[-\ve_0,\ve_0]\times\bbS^1\times\bbS^1$.

\subsection{The transition map $f_0=g_{2\Upsilon_0}:\Omega_{\rm in}\to\Omega_{\rm out}$}
\label{ss:Omega}

The transition section $\Omega$ we construct\footnote{The construction is the same as in~\cite{LMM24}  but the notation is different. The sections $\Omega_\pm$ there are called $\Omega_{\rm in/out}$ here (the $\pm$ is not related to stable/unstable or to $\pm\ve_0$, so was perhaps overused). The section $\Omega_0$ in~\cite{LMM24} is not part of the definition of $f_0$ and hence is no longer part of $\Omega$. It reappears in Section~\ref{sec:g}.

In addition, $\Omega$ is situated inside the neck at $\pm\ve_1$ instead of at $\pm\ve_0$ in~\cite{LMM24}. This ensures that the perturbed cross-sections $\Sigma_{\rm in/out}$ in Section~\ref{sec:f} lie inside $\cR$ so that the Clairaut function $c$ is defined.}
 allows a very simple description of the transitions
of geodesics in $\cR$.
Let $\ve_1=(L+\ve_0)/2$.
As detailed below, we define $\Omega=\Omega_{\rm in}\cup\Omega_{\rm out}$ where:
\begin{enumerate}[$\bullet$]
\item $\Omega_{\rm in}\subset\{\pm\ve_1\}\times\bbS^1\times\bbS^1$ is a neighbourhood of four families of geodesics entering $\cR$ and asymptotic to $\cC$;
\item $\Omega_{\rm out} \subset\{\pm\ve_1\}\times\bbS^1\times\bbS^1$
is a neighbourhood of four families of geodesics exiting $\cR$ and asymptotic to $\cC$ in backwards time;
\item Each of $\Omega_{\rm in}$ and $\Omega_{\rm out}$ is a disjoint union of four annular regions (diffeomorphic to $\bbS^1\times(-1,1)$).
\end{enumerate}

To construct $\Omega_{\rm in/out}$, we use the Clairaut function $c$ from Section~\ref{sec:revolution}.
There is a unique $\psi_0\in(0,\frac{\pi}{2})$ such that $\xi(\pm\ve_1)\cos\psi_0=1$.
Recalling the notation of asymptotic vector introduced in
Section~\ref{sec:asymptotic},
the vectors $(\pm\ve_1,\theta,\pm\psi_0)$, $\theta\in\bbS^1$, constitute four families of asymptotic
vectors with Clairaut constant $c=1$ and asymptotic to $\cC$.
Similarly, $(\pm\ve_1,\theta,\pm(\pi-\psi_0))$, $\theta\in\bbS^1$, constitute four families of asymptotic
vectors with $c=-1$ and asymptotic to $\cC$.
Of these, $(-\ve_1,\theta,\psi_0)$, $(\ve_1,\theta,-\psi_0)$,
$(-\ve_1,\theta,\pi-\psi_0)$, $(\ve_1,\theta,-(\pi-\psi_0))$ correspond to the four families of vectors that enter
$\cR$ and are asymptotic to $\cC$ as $t\to\infty$.
The remaining four families of vectors exit $\cR$ and are asymptotic to $\cC$ as $t\to-\infty$.

Focusing momentarily on $x=(-\ve_1,\theta,\psi_0)$, we define
$$
\Omega_1=\{x=(-\ve_1,\theta,\psi)\in T^1\cR:|c(x)-1|<\chi\}.
$$
Shrinking $\chi>0$, we can ensure that
$\Omega_1= \{-\ve_1\}\times \bbS^1\times I$
where $I$ is an open interval containing $\psi_0$ with $\bar{I}\subset(0,\frac{\pi}{2})$.
Treating the other three families of entering asymptotic vectors similarly, we obtain $\Omega_{\rm in}$ as the union of four sets isomorphic to $\Omega_1$.
Similarly, the set $\Omega_{\rm out}$, isomorphic to $\Omega_{\rm in}$, is obtained by considering the four
families of exiting asymptotic vectors.
It is easy to see that $\Omega$ is transverse to the flow direction.

Denote geodesics in $\cR$ by $\x=\x(t)=(s(t),\theta(t),\psi(t))$ with Clairaut constant $c=c(\x)$.
We parametrise bouncing geodesics~$\x$ taking
$s'(0)=0$ and $\psi(0)=0$ or $\pi$, i.e.\ $\x$ bounces back exactly at time $t=0$. Similarly,
we parametrise crossing geodesics $\x$ taking
$s(0)=0$.

Let $\x=(s(t),\theta(t),\psi(t))$ be a bouncing/crossing geodesic parametrised in this way.
We define the \emph{transition time}
\(
\Upsilon_0(\x)=\min\{t>0: |s(t)|=\ve_1\}.
\)
By symmetry, the transition time of $\x$
from $\Omega_{\rm in}$ to $\Omega_{\rm out}$ is actually equal to $2\Upsilon_0(\x)$ (but it is convenient to call $\Upsilon_0$ the transition time).

Next, we introduce the \emph{transition map} 
\[
f_0:\Omega_{\rm in}\to \Omega_{\rm out}, \quad f_0(x)=g_{2\Upsilon_0(x)}(x)
\]
(where defined).
By \cite[Proposition 4.2]{LMM24},
\[
f_0(-\ve_1,\theta,\psi)=\begin{cases} (-\ve_1,\theta\pm \zeta(\psi),-\psi)
& \text{for bouncing vectors} \\
(\ve_1,\theta\pm \zeta(\psi),\psi)
& \text{for crossing vectors} \end{cases}
\]
where
$\zeta(\psi)  =2\int_{|s(0)|}^{\ve_1} \frac{|c(\x)|}{\xi(s)}\Big[\frac{1+\xi'(s)^2}{\xi(s)^2-c(\x)^2}\Big]^{\frac{1}{2}}ds$.

\subsection{Transition times and derivatives of $f_0$}
\label{ss:times}

As in~\cite{LMM24},
the following partition of $\Omega_{\rm in}$ 
near the set of asymptotic vectors is helpful.

\begin{defn}  \label{defn:Cn}
We define \emph{homogeneity bands} 
$\mfC_n=\mfC_n^{>}\cup\mfC_n^{<}$, $n\ge1$,  
in $\Omega_{\rm in}$ by
\begin{align*}
\mfC_n^{>}&=\big\{x\in \Omega_{\rm in}: 1+\tfrac{1}{(n+1)^2}<|c(x)|< 1+\tfrac{1}{n^2}\big\}\; \text{  (bouncing)};\\
\mfC_n^{<}&=\big\{x\in \Omega_{\rm in}: 1-\tfrac{1}{n^2}< |c(x)|<1-\tfrac{1}{(n+1)^2}\big\}\; \text{ (crossing)}.
\end{align*}
\end{defn}

The next result estimates $\Upsilon_0$ and $\zeta$ in the homogeneity bands. 
Write $\Upsilon_0=\Upsilon_1+\Upsilon_2$ where
$\Upsilon_1$ is the transition time in the portion of the neck between $s=-\ve_1$ and $s=-L$
and $\Upsilon_2$ is the time spent in $\cC$ between $s=-L$ and $s=0$. 

\begin{lemma}\label{lem:Cn}
The following are true.
\begin{enumerate}[i.]
\item[{\rm (1)}] Let $\x$ be a geodesic with entry vector in $\mfC_n$.
Then $\Upsilon_1(\x)\approx n^{\frac{r-2}{r}}$.
If $\x$ is bouncing then $\Upsilon_2(\x)=0$;
if $\x$ is crossing then $\Upsilon_2(\x)\approx n$.
\item[{\rm (2)}] If $\x,\overline{\x}$ are both bouncing or both crossing geodesics, then
$$
|\psi(\Upsilon_0(\x))-\overline{\psi}(\Upsilon_0(\overline{\x}))|\approx |c(\x)-c(\overline{\x})|.
$$
In particular, if the entry vectors of $\x,\overline{\x}$ are both in the same connected component of
$\mfC_n^<$ or of $\mfC_n^>$ then
$$
|\psi(\Upsilon_0(\x))-\overline{\psi}(\Upsilon_0(\overline{\x}))|\leq 2(\ve_1^{2r}+2\ve_1^r)^{-1/2} n^{-3}
+O(n^{-4}).
$$
\item[{\rm (3)}] The following estimates on $\zeta$ hold.
\begin{enumerate}
\item[{\rm (a)}] If $(-\ve_1,\theta,\psi)\in \mfC_n^<$ then
$\zeta'(\psi)\approx -n^3$ and $\zeta''(\psi)\approx n^5$.
\item[{\rm (b)}] If $(-\ve_1,\theta,\psi)\in \mfC_n^>$ then
$\zeta'(\psi)\approx n^{3-\frac{2}{r}}$ and $|\zeta''(\psi)| \ll n^{5-\frac{2}{r}}$.
\end{enumerate}
\end{enumerate}
\end{lemma}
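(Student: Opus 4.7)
The plan is to derive all three parts from the equation of geodesics
\[
\frac{dt}{ds}=\xi(s)\sqrt{\frac{1+\xi'(s)^2}{\xi(s)^2-c^2}},
\]
together with the Clairaut relation $\xi(s)^2\,d\theta/dt=c$ and the explicit profile $\xi(s)=1+(|s|-L)^r$ on the neck, $\xi\equiv 1$ on $\cC$. The relevant small parameter is $|c^2-1|\approx n^{-2}$ on $\mfC_n$, and the central analytical tool is the rescaling $u=(|s|-L)/|c^2-1|^{1/r}$ near $s=L$, which reflects the fact that the neck forces the geodesic to spend most of its transition time in a region of size $\approx n^{-2/r}$. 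For part~(1), write $\Upsilon_1$ as the integral of $dt/ds$ over the neck portion $[s_*,\ve_1]$, with $s_*=L+(c-1)^{1/r}$ (bouncing) or $s_*=L$ (crossing). Expanding $\xi(s)^2-c^2$ near $s=L,\,|c|=1$ and applying the rescaling transforms $\Upsilon_1$ into $|c^2-1|^{(2-r)/(2r)}$ times a universal integral in $u$ which converges at $u\to 1$ and, because $r>2$, at $u\to\infty$; hence $\Upsilon_1\approx n^{(r-2)/r}$. For $\Upsilon_2$ the bouncing case is immediate, while for crossing $\xi\equiv 1$ on $[0,L]$ gives $\Upsilon_2=L/\sqrt{1-c^2}\approx n$ on $\mfC_n^<$.

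For part~(2), the Clairaut relation at the exit reads $c(\x)=\pm\xi(\ve_1)\cos\psi(\Upsilon_0(\x))$, so by the mean value theorem
\[
|\psi(\Upsilon_0(\x))-\overline\psi(\Upsilon_0(\overline\x))|=\frac{|c(\x)-c(\overline\x)|}{\xi(\ve_1)|\sin\psi^*|}
\]
for some $\psi^*$ close to one of $\pm\psi_0,\pm(\pi-\psi_0)$; since $\sin\psi^*$ is bounded away from $0$ and $\infty$ on $\mfC_n$, the equivalence $|\psi-\overline\psi|\approx|c-\overline c|$ follows. The explicit upper bound on a single connected component of $\mfC_n^<$ or $\mfC_n^>$ follows from $|c-\overline c|\le n^{-2}-(n+1)^{-2}=2n^{-3}+O(n^{-4})$ combined with $\xi(\ve_1)|\sin\psi_0|=\sqrt{\xi(\ve_1)^2-1}$.

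Part~(3) is the technical core. Decompose $\zeta=2(A_1+A_2)$, where $A_1=\int_{s_*}^{\ve_1}(|c|/\xi)\sqrt{(1+\xi'^2)/(\xi^2-c^2)}\,ds$ lives on the neck and $A_2=L|c|/\sqrt{1-c^2}$ is present only in the crossing case (since $\xi\equiv 1$ on $[0,L]$). Direct computation gives $\partial_c A_2=L(1-c^2)^{-3/2}\approx n^3$ and $\partial_c^2 A_2=3Lc(1-c^2)^{-5/2}\approx n^5$ on $\mfC_n^<$. For $A_1$, the rescaling from part~(1) produces an expansion $A_1=C_r|c^2-1|^{(2-r)/(2r)}+\text{smoother remainder}$ with $C_r\neq 0$, yielding $\partial_c A_1\approx n^{3-2/r}$ and $|\partial_c^2 A_1|\ll n^{5-2/r}$. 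Since $r>2$ forces $n^{3-2/r}\ll n^3$ and $n^{5-2/r}\ll n^5$, the $A_2$ contributions dominate in the crossing case, producing $\zeta'\approx-n^3$ and $\zeta''\approx n^5$; in the bouncing case only $A_1$ is present, yielding $\zeta'\approx n^{3-2/r}$ and $|\zeta''|\ll n^{5-2/r}$. Finally $|dc/d\psi|=\xi(-\ve_1)|\sin\psi|$ is bounded away from $0$ and $\infty$ on $\mfC_n$, so the chain rule transfers the $c$-estimates to the stated $\psi$-estimates.

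The main obstacle will be the $A_1$ analysis, particularly the justification that the rescaling produces a genuine asymptotic expansion with a nonvanishing leading coefficient $C_r$ (required for $\approx$ in $\zeta'$ in the bouncing case), and the control of the boundary contribution produced by the $c$-dependence of $s_*$, which yields a formally singular term that must be absorbed into a strictly lower-order remainder. Once these are under control, the upper bound on $\zeta''$ for bouncing follows by differentiating the expansion a second time and absorbing all subleading terms into the $n^{5-2/r}$ bound.
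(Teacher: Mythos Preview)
Your plan is correct and lines up with the paper's proof: parts (1) and (2) are handled exactly as you describe (the paper simply cites \cite[Lemmas~4.4--4.5]{LMM24}), and for part~(3) the paper uses the same neck/cylinder decomposition and the same rescaling $u=(|s|-L)/|c-1|^{1/r}$ (packaged there as \cite[Proposition~4.3(1)]{LMM24}) to extract the powers $n^{3-2/r}$ and $n^{5-2/r}$ from the neck integral.

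The one methodological difference worth noting concerns your flagged obstacles. You propose to expand $A_1=C_r|c^2-1|^{(2-r)/(2r)}+\text{remainder}$ first and then differentiate, which forces you to justify that the remainder is smooth enough and that $C_r\neq0$; in the bouncing case you also have to confront the formally singular boundary term coming from $\partial_c s_*$. The paper (via \cite{LMM24}) reverses the order: it differentiates $\zeta$ under the integral sign \emph{first} and only then applies the rescaling to the resulting integral $\int[\xi(s)-c]^{-3/2}\,ds$. In the crossing case this is immediate since the lower limit $s(0)=0$ is $c$-independent and the integrand is integrable; in the bouncing case one removes the apparent singularity at $s_*$ by substituting $u=(s-L)/(c-1)^{1/r}$ (or equivalently $w^2=\xi(s)^2-c^2$) \emph{before} differentiating, after which both the limits and the leading integrand become $c$-independent. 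This order of operations makes both obstacles evaporate: the rescaled integral for $\partial_c A_1$ is manifestly a positive convergent integral (so the $\approx$ is automatic), and no singular boundary term ever appears. Your approach works too, but you will find the bookkeeping considerably lighter if you differentiate first.
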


\begin{proof}
(1) By \cite[Lemma 4.4]{LMM24}, $\Upsilon_1\approx n^{\frac{r-2}{r}}$.
Clearly $\Upsilon_2=0$ for bouncing geodesics.
For crossing geodesics, $c=\cos\psi(-\Upsilon_2)$, so
$$
\Upsilon_2=\frac{L}{|\sin\psi(-\Upsilon_2)|}=\frac{L}{\sqrt{1-c^2}}\sim \frac{L}{\sqrt{2}} n.
$$

\noindent
(2) The proof is the same as for \cite[Lemma 4.5]{LMM24}.

\medskip
\noindent
(3) 
The estimates for bouncing geodesics are unchanged from~\cite{LMM24}.
We consider crossing geodesics with
$c(\x)>0$ (the case $c(\x)<0$ is treated analogously).
Let $a=1+\ve_1^r$.
We have $c(\x)=a\cos\psi$, hence $\cos\psi\sim a^{-1}\approx 1$ and
$\sin\psi\sim (1-a^{-2})^{\frac{1}{2}}\approx 1$. Also, $s(0)=0$, so
\begin{align*}
\zeta(\psi) & =
2a \cos\psi\int_0^{\ve_1}  A(s)\big[\xi(s)^2-a^2\cos^2\psi\big]^{-\frac{1}{2}}ds
\end{align*}
where $A(s)=\frac{[1+\xi'(s)^2]^{\frac{1}{2}}}{\xi(s)}\approx 1$. 
By direct calculation,
\begin{align*}
\zeta'(\psi)&=-2a\sin\psi\int_0^{\ve_1} A(s)\xi(s)^2\big[\xi(s)^2-a^2\cos^2\psi\big]^{-\frac{3}{2}}ds\\
&\approx  -\int_0^{\ve_1} [\xi(s)-c(\x)]^{-\frac{3}{2}}ds= -L[1-c(\x)]^{-\frac{3}{2}}-\int_L^{\ve_1} [\xi(s)-c(\x)]^{-\frac{3}{2}}ds
\end{align*}
and
\begin{align*}
\zeta''(\psi) & =2a\cos\psi\int_0^{\ve_1} A(s)\xi(s)^2\big[3a^2\sin^2\psi+a^2\cos^2\psi-\xi(s)^2\big]
\big[\xi(s)^2-a^2\cos^2\psi\big]^{-\frac{5}{2}}ds\\
 &\approx  \int_0^{\ve_1}[\xi(s)-c(\x)]^{-\frac{5}{2}}ds=L[1-c(\x)]^{-\frac{5}{2}}+\int_L^{\ve_1}[\xi(s)-c(\x)]^{-\frac{5}{2}}ds.
\end{align*}
We note that $1-c(\x)\approx n^{-2}$ and $\xi(s)-c(\x)\approx |s-L|^r+n^{-2}$ for $s\in [L,\ve_1]$.
Applying~\cite[Proposition 4.3(1)]{LMM24} with
$\alpha=\frac32$ and $b=n^{-2}$ gives that
$$
\int_L^{\ve_1}[\xi(s)-c(\x)]^{-\frac{3}{2}}ds\approx (n^{-2})^{-\frac32+\frac{1}{r}}=
n^{3-\frac{2}{r}}
$$
and so $\zeta'(\psi)\approx -(n^{-2})^{-\frac32}-n^{3-\frac{2}{r}}=-n^{3}$.

Similarly, applying~\cite[Proposition 4.3(1)]{LMM24} with
$\alpha=\frac52$ and $b=n^{-2}$ gives that
$$
\int_L^{\ve_1}[\xi(s)-c(\x)]^{-\frac{5}{2}}ds\approx (n^{-2})^{-\frac52+\frac{1}{r}}=n^{5-\frac{2}{r}}
$$
and so $\zeta''(\psi)\approx (n^{-2})^{-\frac52}+n^{5-\frac{2}{r}}\approx n^5$.
\end{proof}

\section{Regularity of the stable and unstable foliations}
\label{sec:GW}

For general geodesic flows in nonpositive curvature, the stable/unstable manifolds $\hW^{s/u}_x$ are
$C^1$ and the bundles $\hE^{s/u}_x$ are continuous.
For $C^5$ surfaces, the stable/unstable manifolds $\hW^{s/u}_x$ are
uniformly $C^{1+\frac12}$, see~\cite[Proposition~III]{GerberWilkinson99}.
However, for the Chernov axioms (A5) and (A7) from Appendix~\ref{app:C}, we require extra regularity.

In some situations, it is possible to increase the regularity to
$C^{1+\Lip}$ and H\"older respectively, see~\cite{GerberWilkinson99,LMM24}. But certain key estimates in these references do not hold for surfaces with flat cylinder. (See the discussion in~\cite[Example at end of Section~3]{GerberWilkinson99}.)

In this section, we show that for surfaces with flat cylinder with $r\ge5$, the
stable/unstable manifolds $\hW^{s/u}_x$ are indeed uniformly $C^{1+\Lip}$ as required for Chernov axiom (A5). We also prove for all $r>4$ that the bundles $\hE^{s/u}_x$ have sufficient regularity (even if not H\"older continuous) to  enable verification of Chernov axiom (A7).

\subsection{Curvature estimates}

For general $x\in M=T^1S$, let $p\in S$ denote its footprint and $k_\pm(x)\ge0$ 
the corresponding geodesic curvatures, equal to the curvatures of the curves $\hW^{s/u}_x$ at $x$.
On $T^1\cR$, we use Clairaut coordinates
$x=(p,\psi)=(s,\theta,\psi)\in [-\ve_0,\ve_0]\times\bbS^1\times\bbS^1$.
For $p\in\cN$, write $p=(\pm(L+a),\theta)$ where $a\in[0,\ve_0-L]$.
Throughout this subsection, we only assume $r>4$.
Also, $C>1$ is a constant depending only on the surface $S$. 

To deal with surfaces with flat cylinder, we formulate
the following key lemma with is a suitably weakened analogue of~\cite[Lemma~3.3]{GerberWilkinson99}.

\begin{lemma}  \label{lem:key}
The following hold:
\begin{enumerate}[{\rm (1)}]
\item $k_\pm(x)\le C |\psi|^{(r-2)/r}$ for all $x\in T^1\cC$;
\item $k_\pm(x)\le C \max\{a^{(r-2)/2},\,|\psi|^{(r-2)/r}\}$ for all $x\in T^1\cN$;
\item $k_\pm(x)\ge C^{-1} |\psi|$ for all $x\in T^1\cR$;
\item $k_\pm(x)\ge C^{-1} a^{(r-1)/2}$ for all $x\in T^1\cN$.  
\end{enumerate}
\end{lemma}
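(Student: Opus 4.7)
The proof rests on two classical tools. First, the Riccati equation: for a positive Jacobi field $y$ along $\gamma_x$, the ratio $u=y'/y$ satisfies $u'+u^2+K=0$, and the stable Green bundle corresponds to the unique forward-bounded solution, with $k_+(x)=-u(0)\ge 0$; the symmetric statement for $k_-$ uses backward-bounded solutions, so by time reversal it is enough to treat $k_+$. Second, the shooting description $k_+(x)=\lim_{T\to\infty}(-y_T'(0))$, where $y_T$ is the positive Jacobi field with $y_T(0)=1$, $y_T(T)=0$; in nonpositive curvature convexity of $y_T$ shows that $-y_T'(0)$ is monotone decreasing in $T$, so for upper bounds it suffices to exhibit one $T$, while for lower bounds one must argue uniformly in $T$. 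Along a geodesic on $\cR$ the curvature vanishes on $\cC$, and on $\cN$ the explicit formula~\eqref{eq:K} gives $K(s)=-r(r-1)a^{r-2}\bigl(1+O(a^2)\bigr)$ with $a=|s|-L$.

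For the upper bounds (1)--(2), I would fix $x$ and follow the forward geodesic across $\cC$ (where any Jacobi field is affine) into $\cN$ (where it is strictly convex). For (1), the forward orbit dwells in $\cC$ for time at least $T_0\approx L/|\psi|$ before entering $\cN$, so the candidate field with $y(0)=1$, $y'(0)=-V$ remains linear with slope $-V$ on $[0,T_0]$ and then evolves according to Jacobi's equation in $\cN$. A direct Riccati comparison using the explicit profile of $K$, combined with the transition-time and angle estimates of Lemma~\ref{lem:Cn}(1),(3), shows that such a field can be made to vanish in finite time precisely when $V\ge C|\psi|^{(r-2)/r}$, yielding the claimed inequality. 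The same scheme produces (2), with the $\cC$-dwell time replaced by the transit past the neck at height $a$; the maximum in the statement records whichever of the two length scales dominates.

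For the lower bounds (3)--(4), one argues in the opposite direction: every test field $y_T$ with $y_T(0)=1$, $y_T(T)=0$ must satisfy $-y_T'(0)$ at least as large as the claimed quantity. Integrating the Riccati equation $\dot k_+=k_+^2+K$ on an interval where $K\le 0$ yields the pull-back inequality $k_+(x)\ge k_+(g_t x)/\bigl(1+t\,k_+(g_t x)\bigr)$, so it suffices to identify a future time $t=O(1)$ at which $k_+(g_t x)$ is bounded below by the target. For (3), this is achieved at the first entry of the geodesic into $\cN$, where the equation of geodesics forces an angular change of order $|\psi|$; for (4), one uses the dwell time $\approx a^{-(r-2)/2}$ at height $a$ together with the magnitude $\approx a^{(r-2)/2}$ of the attracting equilibrium $k_+^2=-K$ of the Riccati equation, producing a lower bound of order $a^{(r-1)/2}$. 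The principal technical obstacle, and the reason that the Gerber--Wilkinson argument of~\cite{GerberWilkinson99} does not carry over, is that on $\cC$ there is no curvature to integrate, so all lower bounds must be extracted from a single passage through $\cN$; matching the exponents $(r-2)/r$, $(r-2)/2$ and $(r-1)/2$ precisely is a delicate book-keeping exercise controlled by the explicit form of $\xi$ and the dynamical estimates from Section~\ref{sec:R-dynamics}, and is what ultimately constrains $r>4$.
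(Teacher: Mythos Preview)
Your Riccati/pull-back strategy for the lower bounds (3)--(4) is the right idea (the upper bounds (1)--(2) are simply cited from~\cite[Theorem~3.1]{GerberNitica99} in the paper), but the execution has genuine gaps. Your claim that a time $t=O(1)$ suffices for (3) is false: for $x\in T^1\cC$ with small $|\psi|$ there is no nearby anchor at which $k_+$ is already known to exceed $|\psi|$, and your ``angular change at first entry into $\cN$'' does not by itself bound $k_+$ from below. The paper instead follows the geodesic \emph{backward until it leaves $\cR$ entirely}, where strictly negative curvature forces $u(-T)\approx 1$; then $T\le 2\Upsilon_0\ll n$ by Lemma~\ref{lem:Cn}(1) and the pull-back $u(0)\ge u(-T)/(1+Tu(-T))\gg 1/T\gg n^{-1}\gg|\psi|$ finishes. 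For (4), your dwell time at height $a$ has the wrong exponent: in the relevant regime the geodesic remains in the band $[L+\tfrac12 a,\,L+2a]\times\bbS^1$ for backward time $T\gg a^{-(r-3)/2}$, not $a^{-(r-2)/2}$; combining this with $|K|\approx a^{r-2}$ via a direct Riccati argument (if $u(0)\le\tfrac12|K|^{1/2}$ then $u'\gg a^{r-2}$ on $[-T,0]$) gives $u(0)\gg T\,a^{r-2}=a^{(r-1)/2}$.

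Most importantly, you omit the dichotomy that organises the whole argument. The exit-from-$\cR$ mechanism requires the crossing regime $a^{r-1}\le\psi(0)^2$ (the paper checks explicitly that $c<1$ there); the dwell-in-$\cN$ mechanism requires $\psi(0)^2\le a^{r-1}$, since otherwise the geodesic traverses the band in time $O(a/|\psi|)$, far too short. Because the case boundary is exactly $|\psi|=a^{(r-1)/2}$, each case delivers both (3) and (4) at once. Handling (3) and (4) independently, as you propose, leaves each argument undefined on the complementary regime---for instance your scheme for (3) says nothing about $x\in T^1\cN$ with $|\psi|$ small, and your scheme for (4) breaks down when $|\psi|$ is not small compared to $a^{(r-1)/2}$.
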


\begin{proof}
Our proof largely follows~\cite[Lemma~4.3]{GerberWilkinson99}.\footnote{We note that $m$ in~\cite[Lemma~4.3]{GerberWilkinson99} and our $r$ are related by $r=m+2$. The boundary case is $|\psi(0)|=a^{(r-1)/2}$ compared with
$|\psi(0)|=a^{r/2}$ in~\cite[Lemma~4.3]{GerberWilkinson99}.}

\vspace{1ex} \noindent
The estimates in (1) and~(2) were proved in~\cite[Theorem~3.1]{GerberNitica99}.

\vspace{1ex} \noindent
Next, we consider (3) and~(4) simultaneously with
the convention that $a=0$ for $x\in T^1\cC$.
It suffices to consider $k_+$ since the situation for $k_-$ is identical.

Let $\tS$ be the universal cover of $S$. Lift $x$ to a point in $\tS$, which we will also denote by $x$, and
let $\gamma$ be the geodesic on $\tS$ with $\gamma'(0)=x$. 
Let $u$ be the unstable Riccati solution along $\gamma$, so that $u\ge0$ with $u'=-u^2-K\circ\gamma$
and $k_+(x)=u(0)$.

\vspace{1ex}
Suppose first that $a^{r-1}\le \psi(0)^2$.
Then $a^r\le a\psi(0)^2\le \frac14\psi(0)^2$ for $a$ small.
Without loss of generality, we can assume that $c$ is close to 1 and $\psi(0)$ is close to
zero.
By~\eqref{eq:Clairaut}, the Clairaut constant $c$ satisfies
\begin{align*}
c & =\xi(L+a)\cos\psi(0)=(1+a^r)\cos\psi(0)
\\ &
\le (1+\tfrac14\psi(0)^2)(1-\tfrac12\psi(0)^2+O(\psi(0)^4))
=1-\tfrac14\psi(0)^2+O(\psi(0)^4)<1.
\end{align*}
This means that we are in the crossing case. 
Supposing that the geodesic $\gamma$ has entry vector in $\mfC_n^<$, 
the definition of homogeneity bands in Definition~\ref{defn:Cn}
gives that $c\sim 1-n^{-2}$, so $|\psi(0)|\ll n^{-1}$.
We choose $T>0$ such that $\gamma(-T)$ is leaving $\cR$, with $u(-T)\approx 1$. 
By Lemma~\ref{lem:Cn}(1), $T\le 2\Upsilon_0\ll n$.
Hence by~\cite[Lemma~3.1(v)]{GerberWilkinson99},
\[
k_+(x)=u(0)\ge \frac{u(-T)}{Tu(-T)+1}\approx \frac{1} {T+1} \gg n^{-1} \gg |\psi(0)|\ge a^{(r-1)/2},
\]
yielding the desired lower bounds.

\vspace{1ex} 
It remains to consider the case $\psi(0)^2\le a^{r-1}$.
Let $\gamma(t),\,t\in[-T,0]$, be a maximal geodesic segment lying in
$\left[L+\frac12 a,L+ 2a\right]\times \bbS^1$ with $\gamma(0)\in\{L+a\}\times \bbS^1$ and
$\gamma(-T)\in\{L+\frac12 a,L+2a\}\times \bbS^1$.
We claim that $T\gg   a^{-(r-3)/2}$.
To see this, note that $-K(\gamma(t)))\approx a^{r-2}$ for $t\in[-T,0]$ 
by~\eqref{eq:K}, so 
by~\cite[Lemma~4.2(ii)]{GerberWilkinson99},  $|\psi'(t)|\ll a^{r-1}$ for $t\in[-T,0]$.
Hence $|\psi(t)|\le |\psi(0)|+Ta^{r-1}\ll a^{(r-1)/2}+Ta^{r-1}$.
If $T\le a^{-(r-3)/2}$ then $|\psi(t)|\ll a^{(r-1)/2}$
and so we have the trigonometric estimate
\[
T\ge \tfrac12 a / \max \sin|\psi(t)|\gg a^{-(r-3)/2}.
\]
Hence $T\gg   a^{-(r-3)/2}$.

On the interval $[-T,0]$, 
$u(t)$ satisfies the Riccati equation $u'=-u^2+\alpha$ where $\alpha=-K\circ\gamma$.
Let 
$\alpha_{\ast}=\min_{[-T,0]}\alpha \approx a^{r-2}$. If $u(0)\leq \frac{1}{2} \alpha_{\ast}^{1/2}$, then
$u'(0)\ge \frac34 \alpha_\ast$
and working backwards in time it follows that on $[-T,0]$ we have $u$ non-decreasing, $u(t)\le\frac12 \alpha_\ast^{1/2}\approx a^{(r-2)/2}$ and $u'(t)\ge \frac34 \alpha_\ast\approx a^{r-2}$. Hence 
\[
u(0)\gg u(-T)+Ta^{r-2}
\ge Ta^{r-2}\gg a^{(r-1)/2}.
\]
Otherwise, 
$u(0)\ge \frac12 \alpha_\ast^{1/2}\approx a^{(r-2)/2}$.
Either way, 
$k_+(x)=u(0)\gg a^{(r-1)/2}\ge |\psi(0)|$
as required.
\end{proof}

\begin{cor}  \label{cor:key}
$k_-(x)^{r/(r-2)}\le Ck_+(x)$ and
$k_+(x)^{r/(r-2)}\le Ck_-(x)$
for all $x\in T^1S$.
\end{cor}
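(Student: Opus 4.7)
The plan is to read off the corollary directly from Lemma~\ref{lem:key} by combining its upper bound on one curvature with its lower bound on the other. Because the bounds in Lemma~\ref{lem:key} are symmetric in the $\pm$ signs (and, equivalently, time reversal swaps the Green bundles $\hE^{s/u}$), it is enough to prove a single inequality, say $k_-(x)^{r/(r-2)}\le Ck_+(x)$; the other then follows by the same argument with the roles of $+$ and $-$ interchanged.

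I would handle the main case $x\in T^1\cR$ first. Parts (1)--(2) of Lemma~\ref{lem:key} give the upper bound
\[
k_-(x)\le C\max\{a^{(r-2)/2},\,|\psi|^{(r-2)/r}\},
\]
with the convention $a=0$ on $T^1\cC$, while parts (3)--(4) give the lower bound
\[
k_+(x)\ge C^{-1}\max\{|\psi|,\,a^{(r-1)/2}\}.
\]
Raising the upper bound to the power $r/(r-2)$ turns the right-hand side into $C'\max\{a^{r/2},\,|\psi|\}$, so the desired inequality reduces to the elementary statement
\[
\max\{a^{r/2},\,|\psi|\}\le C''\max\{|\psi|,\,a^{(r-1)/2}\}.
\]
The $|\psi|$ term is dominated on both sides trivially, and the remaining estimate is
$a^{r/2}=a^{1/2}\cdot a^{(r-1)/2}\le(\ve_0-L)^{1/2}\,a^{(r-1)/2}$, which holds because $a\in[0,\ve_0-L]$ is bounded.

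For $x\in T^1S\setminus T^1\cR$, the footprint lies in a compact set contained in the regular set (since $\Deg\subset T^1\cC\subset T^1\cR$), where the Green bundles and hence $k_\pm$ are continuous and strictly positive. Compactness then yields uniform positive upper and lower bounds on $k_\pm$, and the inequality is trivial there.

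The only step requiring genuine attention is the elementary inequality $a^{r/2}\le C a^{(r-1)/2}$, where the boundedness of $a$ is essential in converting the half-power gap into a constant; otherwise the argument is just careful bookkeeping of Lemma~\ref{lem:key}, and I do not expect any substantive obstacle.
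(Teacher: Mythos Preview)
Your proposal is correct and follows the same approach as the paper: derive the inequality on $T^1\cR$ by combining the upper bounds (1)--(2) with the lower bounds (3)--(4) of Lemma~\ref{lem:key}, and handle $T^1(S\setminus\cR)$ by a compactness/continuity argument using that $k_\pm>0$ off $\Deg$. The paper states the $\cR$ case in one line (``follows from Lemma~\ref{lem:key}'') and cites~\cite{GerberWilkinson99} for the off-$\cR$ case; your version simply unpacks both steps. Two minor points of phrasing: $\Reg$ is a subset of $T^1S$, not of $S$, so it is $x$ (not its footprint) that lies in $\Reg$; and since $T^1(S\setminus\cR)$ is open you should pass to its closure, which is still contained in $T^1(S\setminus\cC)\subset\Reg$, before invoking compactness.
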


\begin{proof}
It suffices to prove the first inequality.
For $p\in\cR$, this follows from Lemma~\ref{lem:key}.
The same argument as in~\cite[Lemma~3.4]{GerberWilkinson99} deals with $p\in S\setminus\cR$.
Indeed, $k_+(x)=0$ only for $x$ tangent to a geodesic along
which $K$ vanishes, so $k_+(x)>0$ for $p\in S\setminus\cC$.
By the continuity of $k_\pm$, 
\[
k_-(x)^{r/(r-2)}\ll k_+(x) \quad\text{for $p\not\in \cR$},
\]
completing the proof.
\end{proof}

\begin{cor}  \label{cor:key2}
The following hold:
\begin{enumerate}[{\rm (1)}]
 \item 
 $|K(p)|\le C  k_+(x)^2$ for $p\in S\setminus \cN$.
 \item
 $|K(p))|\le C  k_+(x)^{2(r-2)/(r-1)}$ for $p\in \cN$.
 \end{enumerate}
\end{cor}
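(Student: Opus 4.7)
The plan is to split according to where the footprint $p$ lies and reduce each case to a combination of the explicit curvature formula~\eqref{eq:K} on $\cR$ with the lower bounds on $k_+$ from Lemma~\ref{lem:key}, using a compactness argument on $\overline{S\setminus\cR}$.

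For part~(2), I parametrise $p=\Xi(\pm(L+a),\theta)\in\cN$ with $a\in(0,\ve_0-L]$ and substitute $\xi(L+a)=1+a^r$, $\xi'(L+a)=\pm r a^{r-1}$, $\xi''(L+a)=r(r-1)a^{r-2}$ into~\eqref{eq:K}. This yields $|K(p)|\approx a^{r-2}$ uniformly in $a$. The lower bound $k_+(x)\ge C^{-1}a^{(r-1)/2}$ from Lemma~\ref{lem:key}(4) then gives
\[
k_+(x)^{2(r-2)/(r-1)}\ge C^{-2(r-2)/(r-1)}a^{r-2}\gg |K(p)|,
\]
the exponent $2(r-2)/(r-1)$ being chosen precisely so that $(r-1)/2$ times $2(r-2)/(r-1)$ equals $r-2$.

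For part~(1), I decompose $S\setminus\cN=\cC\cup(S\setminus\cR)$. On $\cC$ the curvature vanishes by~\eqref{eq:K}, so $|K(p)|=0$ and the inequality is trivial regardless of $k_+(x)$. It remains to bound $|K(p)|/k_+(x)^2$ uniformly for $x\in T^1\overline{S\setminus\cR}$. This set is compact, and since the degenerate set $\Deg=[-L,L]\times\bbS^1\times\{0,\pi\}$ is contained in $T^1\cC$ while $\overline{S\setminus\cR}$ is disjoint from $\cC$ (using $L<\ve_0$), we have $T^1\overline{S\setminus\cR}\cap\Deg=\emptyset$, so $k_+(x)>0$ throughout. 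Continuity of $k_+$ on $M$ and of $K$ on $S$, combined with compactness, then supplies a uniform upper bound on the ratio, as in the closing lines of Corollary~\ref{cor:key}.

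The only moderately delicate step is verifying the uniform positive lower bound for $k_+$ on $T^1\overline{S\setminus\cR}$; this rests on continuity of the Green bundles and the fact that every degenerate vector has its footprint inside $\cC$. Everything else is a direct insertion of Lemma~\ref{lem:key} into the curvature formula~\eqref{eq:K}.
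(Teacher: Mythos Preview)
Your proof is correct and follows essentially the same route as the paper: for part~(2) you combine $|K(p)|\approx a^{r-2}$ with Lemma~\ref{lem:key}(4), and for part~(1) you split into $\cC$ (where $K\equiv0$) and $\overline{S\setminus\cR}$, invoking continuity and the positivity of $k_+$ away from $T^1\cC$ via a compactness argument. The paper's version is terser but the ingredients and structure are the same.
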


\begin{proof}
Part~(1) is trivial for $p\in\cC$ since $K|_\cC=0$.
The same argument as in~\cite[Lemma~3.4]{GerberWilkinson99} deals with $p\in S\setminus\cR$:
again, $k_+(x)>0$ on $S\setminus\cC$, so
part~(1) follows by continuity of $k_+$ and $K$.

In addition, $K(p)\approx -a^{r-2}$ on $\cN$, so part~(2) follows
from Lemma~\ref{lem:key}(4).
\end{proof}

\begin{lemma} \label{lem:KLip}
The following hold:
\begin{enumerate}[{\rm (1)}]
\item $|K(p_0)-K(p_1)|\le C\big(|K(p_0)|^{1/2}d(p_0,p_1)+d(p_0,p_1)^2\big)$ for all $p_0,p_1\in S$. 
\item $|K(p_0)-K(p_1)|\le C\max\{|K(p_0)|,|K(p_1)|\}^{(r-3)/(r-2)}d(p_0,p_1)$
for all $p_0,p_1\in \cN$.
\end{enumerate}
\end{lemma}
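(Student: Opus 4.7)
The plan is to treat the two parts separately: part~(1) via a Glaeser-type inequality applied to $|K|$ globally on~$S$, and part~(2) via a direct mean value computation using the explicit profile~$\xi$ on~$\cN$.

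For part~(1), I first observe that since $S$ is $C^4$, the Gaussian curvature $K$ is of class $C^2$, and by nonpositivity the function $\Phi:=-K=|K|$ is a nonnegative $C^2$ function on the compact surface~$S$ with uniformly bounded Hessian. The classical Glaeser-type inequality for nonnegative $C^2$ functions with bounded Hessian, applied pointwise in normal coordinates, then yields
\[
\|\nabla\Phi(p)\|\le C_0\,\Phi(p)^{1/2}
\]
for some constant $C_0>0$ depending only on $\|\mathrm{Hess}\,\Phi\|_\infty$. A Taylor expansion of $\Phi$ along the minimising geodesic from $p_0$ to $p_1$, valid when $d(p_0,p_1)$ is less than the injectivity radius of~$S$, then gives $|K(p_1)-K(p_0)|\le C_0|K(p_0)|^{1/2}d(p_0,p_1)+C_1 d(p_0,p_1)^2$. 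For $d(p_0,p_1)$ larger than the injectivity radius the bound is trivial since $\|K\|_\infty<\infty$.

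For part~(2), on $\cN$ the curvature depends only on~$s$ through~\eqref{eq:K}. Writing $a=|s|-L$, direct differentiation of $\xi(s)=1+a^r$ gives $|K(s)|\approx a^{r-2}$ and $|K'(s)|\approx a^{r-3}$. The mean value theorem yields $|K(p_0)-K(p_1)|\le\sup_{[s_0,s_1]}|K'(s)|\cdot|s_0-s_1|$; since $r>3$, the function $a\mapsto a^{r-3}$ is monotone, so the supremum is $\max\{a_0,a_1\}^{r-3}$. Rewriting $a^{r-3}=(a^{r-2})^{(r-3)/(r-2)}\approx|K|^{(r-3)/(r-2)}$ and bounding $|s_0-s_1|\le d(p_0,p_1)$ directly from the Riemannian metric on the surface of revolution produces the claimed estimate. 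The case where $p_0$ and $p_1$ lie on opposite sides of~$\cC$ is handled by routing through any point of the flat cylinder, on which $K=0$, and combining the two one-sided estimates.

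The main technical point is establishing the Glaeser inequality intrinsically on the compact surface~$S$; this reduces in normal coordinates to the classical Euclidean statement, but requires some care at the seam between $\cR$ and the rest of~$S$. The sharper estimate in part~(2) on~$\cN$ simply reflects the fact that $|K|\approx a^{r-2}$ vanishes faster than the universal Glaeser bound $|K|^{1/2}\approx a^{(r-2)/2}$ detects, a gain made available by the explicit profile structure of~$\cR$.
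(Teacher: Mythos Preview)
Your proof is correct and follows essentially the same route as the paper. Part~(1) spells out the Glaeser-type argument that the paper simply cites from~\cite[Lemma~3.2]{GerberWilkinson99}, and part~(2) is the same mean-value computation the paper carries out (the paper handles the two-sided case by an implicit ``without loss of generality'' via the symmetry $s\mapsto -s$, whereas you route through~$\cC$; both are fine). Your caution about the seam between~$\cR$ and the rest of~$S$ is unnecessary, since $S$ is globally $C^4$ and hence $K$ is globally $C^2$.
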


\begin{proof}
Part (1) is~\cite[Lemma~3.2]{GerberWilkinson99}.
To prove part~(2), suppose without loss of generality that $p_0,p_1$ lie in the right-hand portion of $\cN$ 
and write $p_i=(L+a_i,\theta_i)$ where
$0\le a_i\le \ve_0-L$, $\theta_i\in\bbS^1$.
By~\eqref{eq:K}, $-K=\xi''\xi^{-1}(1+(\xi')^2)^{-2}$
so $-K(p_i)\approx a_i^{r-2}$ and
$-K'(p_i)\approx a_i^{r-3}$.
By the mean value theorem,
$|K(p_0)-K(p_1)|\ll \max\{|K'(p_0)|,|K'(p_1)|\}|a_0-a_1|$ and the result follows.~
\end{proof}

\begin{cor} \label{cor:KLip}
Set $q=\min\{1,2(r-3)/(r-1)\}$.
Let $x_0,x_1\in T^1S$ with footprints $p_0,p_1\in S$ and corresponding geodesic 
curvatures $k_+(x_0),k_+(x_1)$. Then
\[
|K(p_0)-K(p_1)|\le C\big\{(k_+(x_0)^q+k_+(x_1)^q)d(p_0,p_1)+d(p_0,p_1)^2\big\}.
\]
\end{cor}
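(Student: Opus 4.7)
The plan is to combine Lemma~\ref{lem:KLip} with Corollary~\ref{cor:key2}, splitting cases according to whether the footprints $p_0,p_1$ lie in the neck $\cN$ or outside it. Throughout, I will use without comment the fact that $k_+$ is uniformly bounded on the compact manifold $M=T^1S$, so that for any exponents $a\ge b\ge 0$ one has $k_+(x)^a\le C k_+(x)^b$ with $C$ depending only on $\sup k_+$ and the exponents. This absorption of a larger exponent into a smaller one is what lets me reduce various natural exponents appearing below down to the single target exponent $q$.

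\textbf{Case 1: at least one footprint is in $S\setminus\cN$.} Say $p_0\in S\setminus\cN$; by the symmetry of Lemma~\ref{lem:KLip}(1) under swapping the two arguments this is without loss of generality. Apply Lemma~\ref{lem:KLip}(1) and bound $|K(p_0)|^{1/2}\le C k_+(x_0)$ using Corollary~\ref{cor:key2}(1) to get
\[
|K(p_0)-K(p_1)|\le C\big(k_+(x_0)\,d(p_0,p_1)+d(p_0,p_1)^2\big).
\]
Since $q\le 1$, the boundedness of $k_+$ gives $k_+(x_0)\le C k_+(x_0)^q\le C(k_+(x_0)^q+k_+(x_1)^q)$, which delivers the claim.

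\textbf{Case 2: both $p_0,p_1\in\cN$.} Apply Lemma~\ref{lem:KLip}(2). Since $(r-3)/(r-2)\le 1$, the sub-additivity $\max\{a,b\}^{(r-3)/(r-2)}\le a^{(r-3)/(r-2)}+b^{(r-3)/(r-2)}$ gives
\[
|K(p_0)-K(p_1)|\le C\big(|K(p_0)|^{(r-3)/(r-2)}+|K(p_1)|^{(r-3)/(r-2)}\big)d(p_0,p_1).
\]
Plug in Corollary~\ref{cor:key2}(2): the exponent on each $k_+(x_i)$ becomes
\[
\frac{2(r-2)}{r-1}\cdot\frac{r-3}{r-2}=\frac{2(r-3)}{r-1}\ge q,
\]
so boundedness of $k_+$ lets me lower the exponent to $q$ and finish.

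There is no genuine obstacle here; the statement is essentially a bookkeeping lemma. The only thing worth pausing over is that the definition $q=\min\{1,2(r-3)/(r-1)\}$ is chosen precisely so that the two exponents arising in the two cases, namely $1$ from Case~1 and $2(r-3)/(r-1)$ from Case~2, can simultaneously be reduced to the common exponent $q$; any smaller choice of $q$ would still work but would weaken the estimate unnecessarily.
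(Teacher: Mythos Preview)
Your proof is correct and follows essentially the same approach as the paper: split into the cases ``at least one footprint outside $\cN$'' versus ``both in $\cN$'', apply the corresponding part of Lemma~\ref{lem:KLip}, and then convert the $|K|$ factors into $k_+$ via Corollary~\ref{cor:key2}. The only cosmetic differences are that the paper keeps the $\max$ rather than splitting it into a sum, and leaves the final reduction to the common exponent $q$ implicit.
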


\begin{proof}
If $p_0\not\in\cN$, then it follows from 
Corollary~\ref{cor:key2}(1) and
Lemma~\ref{lem:KLip}(1) that
\[
|K(p_0)-K(p_1)|\ll |K(p_0)|^{1/2}d(p_0,p_1)+d(p_0,p_1)^2 
\ll k_+(x_0)d(p_0,p_1)+d(p_0,p_1)^2,
\]
which implies the required estimate.
The same argument applies if $p_1\not\in\cN$.

If $p_0,p_1\in\cN$, 
then it follows from Corollary~\ref{cor:key2}(2) and Lemma~\ref{lem:KLip}(2) that
\begin{align*}
  |K(p_0) -K(p_1)| & \ll \max\{|K(p_0)|,|K(p_1)|\}^{(r-3)/(r-2)}d(p_0,p_1)
\\ & \ll \max\{k_+(x_0),k_+(x_1)\}^{2(r-3)/(r-1)}d(p_0,p_1)
\end{align*}
which again implies the required estimate.
\end{proof}

\subsection{Two regularity theorems}

\begin{thm} \label{thm:GW1} Assume $r\ge5$.
The curves $\hW_x^{s/u}$ are uniformly $C^{1+\Lip}$.
\end{thm}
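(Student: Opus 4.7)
The plan is to prove the uniform Lipschitz bound
\[
|k_+(x_0) - k_+(x_1)| \le C\,d_u(x_0,x_1)
\]
for all $x_0, x_1$ on a common unstable leaf (with $d_u$ the leaf distance), together with the symmetric estimate for $k_-$ along stable leaves. Once this is established, uniform $C^{1+\Lip}$ regularity of $\hW^{s/u}$ follows by the reconstruction-of-leaves-from-the-Green-bundle argument of~\cite{GerberWilkinson99}, since the tangent direction along a leaf rotates at a rate equal to the geodesic curvature. By time reversal it suffices to treat the unstable case.

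For this I would follow the Riccati comparison of Gerber and Wilkinson. Let $\gamma_i$ be the geodesic with $\gamma_i'(0) = x_i$, and set $u_i(t) = k_+(g_t x_i)$ and $K_i(t) = K(\gamma_i(t))$. Each $u_i$ satisfies the unstable Riccati equation $u_i' = -u_i^2 - K_i$, so the difference $w = u_0 - u_1$ satisfies the linear ODE $w' = -(u_0+u_1)w - (K_0 - K_1)$. Using boundedness of $w$ and divergence of the backward expansion integral, variation of constants yields the representation
\[
k_+(x_0) - k_+(x_1) = -\int_{-\infty}^0 \exp\Big({-}\!\int_t^0 (u_0+u_1)(s)\,ds\Big)\,(K_0 - K_1)(t)\,dt.
\]

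The hypothesis $r\ge 5$ enters at exactly one step, through Corollary~\ref{cor:KLip}: when $r\ge 5$, the exponent $q = \min\{1,\,2(r-3)/(r-1)\}$ equals $1$, producing the clean bound
\[
|(K_0 - K_1)(t)| \le C\big((u_0+u_1)(t)\, d(t) + d(t)^2\big), \qquad d(t) := d(\gamma_0(t),\gamma_1(t)).
\]
This matches the estimate available to~\cite{GerberWilkinson99} in the genuinely negatively curved case, and is exactly what fails when $r<5$. Combining this with the Jacobi-field comparison $d(t) \le C e^{-\int_t^0 u_0\,ds}\,d_u(x_0,x_1)$ along the unstable leaf for $t\le 0$ and the elementary identity $\int_{-\infty}^0 (u_0+u_1)(t) \exp(-\int_t^0 (u_0+u_1)\,ds)\,dt = 1$, the linear piece of the integrand contributes $O(d_u(x_0,x_1))$, while the quadratic piece contributes $O(d_u(x_0,x_1)^2)$, and both are absorbed into the desired Lipschitz bound with constants uniform in the leaf.

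The main obstacle is justifying the above integral estimates in the non-uniformly hyperbolic setting: when $\gamma_0$ spirals near or crosses $\cC$, the Riccati solution $u_0(t)$ is very small for long stretches, so the clean exponential contraction used in~\cite{GerberWilkinson99} is unavailable. I would handle this by decomposing the backward orbit into successive excursions through the neck $\cN$ and the flat cylinder $\cC$ delimited by returns to the transition section $\Omega$, localising to the homogeneity bands $\mfC_n$ via Lemma~\ref{lem:Cn}, and invoking the pointwise Riccati and curvature bounds of Lemma~\ref{lem:key} on each excursion. The contribution of each excursion to the integral is finite, and successive contributions are damped by the expansion accumulated on intervening regular segments, so the total integral converges uniformly. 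This bookkeeping is the technically substantial part of the proof, but is routine once Corollary~\ref{cor:KLip} is available with $q=1$.
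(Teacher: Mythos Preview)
Your framework is right --- the Riccati comparison and Corollary~\ref{cor:KLip} with $q=1$ are exactly the inputs --- but you are making the argument harder than necessary, and the difficulty you flag as ``the technically substantial part'' is entirely avoidable.

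The paper does not integrate to $-\infty$. It truncates at $A=\eps^{-1}$, where $\eps=d(x_0,x_1)$, and keeps the finite-interval estimate
\[
|u_0(0)-u_1(0)|\le \int_{-A}^0 |K_0-K_1|\,\hj_0\hj_1\,dt + |u_0(-A)-u_1(-A)|\,\hj_0(-A)\hj_1(-A),
\]
with $\hj_i(t)=\exp(-\int_t^0 u_i)$. It uses only the convexity bound $d(\gamma_0(t),\gamma_1(t))\le\eps$ for $t\le0$, not your Jacobi-field contraction estimate. The quadratic piece of Corollary~\ref{cor:KLip} then contributes at most $\int_{-A}^0\eps^2\,dt=A\eps^2=\eps$, with no need to control $\int\hj_0\hj_1$ at all; the linear piece gives $\eps\int_{-A}^0(u_0\hj_0+u_1\hj_1)=\eps\int_{-A}^0(\hj_0'+\hj_1')\le 2\eps$ as you already observed. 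The boundary term is handled by the universal Riccati inequality $\hj_0'(-A)\le 1/A$ from~\cite[Lemma~3.1(iii)]{GerberWilkinson99}, valid regardless of hyperbolicity: assuming $u_0(-A)\ge u_1(-A)$ one has $|u_0(-A)-u_1(-A)|\hj_0(-A)\hj_1(-A)\le u_0(-A)\hj_0(-A)=\hj_0'(-A)\le 1/A=\eps$.

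So the slow decay of $u_i$ near $\cC$ never enters: truncating at $A=\eps^{-1}$ balances the $d^2$ term against the boundary term, and neither requires any dynamical control beyond convexity of distance. Your representation with lower limit $-\infty$ introduces precisely the convergence problem (uniform finiteness of $\int_{-\infty}^0\hj_0\hj_1$) that the truncation sidesteps, and your proposed excursion decomposition via Lemmas~\ref{lem:Cn} and~\ref{lem:key} is unnecessary.
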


\begin{proof}
As in~\cite{GerberWilkinson99}, we focus on the unstable leaves $\hW_x^u$.  The stable case is identical.
Let $\gamma_0$, $\gamma_1$ be two geodesics on $\tS$ such that
$\dist_{\tS}(\gamma_0(t),\gamma_1(t))$
is bounded for $t\leq 0$ and
set $\eps=\dist_\tS(\gamma_0(0),\gamma_1(0))$.
We show that the geodesic curvatures 
$k_+(\gamma_i'(0))$ satisfy
$|k_+(\gamma_0'(0))- k_+(\gamma_1'(0))|\ll \eps$.

By convexity of $\dist_\tS$, we have 
\begin{equation} \label{eq:convex}
\dist_\tS(\gamma_0(t),\gamma_1(t))\le
\dist_\tS(\gamma_0(0),\gamma_1(0))= \eps\qquad\text{for $t\le0$}.
\end{equation}

Let $u_i$ be the unstable Riccati solutions along $\gamma_i$,
so that $u_i\ge0$ and $u_i'=-u_i^2-K_i$ where $K_i=K\circ\gamma_i$.
Then $k_+(\gamma_i'(t))=u_i(t)$ and we obtain 
by~\cite[Lemma~3.1(ii)]{GerberWilkinson99} that
\begin{align} \label{eq:diff}
|k_+(\gamma_0'(0))- &  k_+(\gamma_1'(0))|
 =|u_0(0)-u_1(0)|
\\ & \le \int_{-A}^0|K_0(t)-K_1(t)|\,\hj_0(t)\hj_1(t)\,dt
+|u_0(-A)-u_1(-A)|\,\hj_0(-A)\hj_1(-A), \nonumber
\end{align}
where $\hj_i(t)=\exp\big\{-\int_t^0 u_i(t')\,dt'\big\}\in[0,1]$.
Since $r\ge5$, we have $q=1$ in Corollary~\ref{cor:KLip}.
Taking $A=\eps^{-1}$, 
by Corollary~\ref{cor:KLip} and~\eqref{eq:convex}
\begin{align*}
\int_{-A}^0|K_0-K_1|\, \hj_0\hj_1 & 
 \ll A\eps^2+\eps \int_{-A}^0 (k_+(\gamma_0')+k_+(\gamma_1'))\hj_0\hj_1 
  =\eps+ \eps \int_{-A}^0 (u_0+u_1)\hj_0\hj_1 
 \\ & \le \eps+\eps \int_{-A}^0 (u_0\hj_0+u_1\hj_1) 
 =  \eps+\eps \int_{-A}^0 (\hj_0'+\hj_1') 
\\ & =  \eps+\eps (\hj_0(0)+\hj_1(0)-\hj_0(-A)-\hj_0(-A))
\le   3\eps ,
\end{align*}
which deals with the first term on the right-hand side of~\eqref{eq:diff}.
To estimate the remaining term, we can suppose without loss of generality that $u_0(-A)\ge u_1(-A)$.
By~\cite[Lemma~3.1(iii)]{GerberWilkinson99}, $\hj_0'(-A)\le 1/A$. Hence
\[
|u_0(-A)-u_1(-A)|\,\hj_0(-A)\hj_1(-A)
\le u_0(-A)\hj_0(-A)=\hj_0'(-A)\le 1/A=\eps,
\]
completing the proof.
\end{proof}

\begin{rmk} We also obtain a regularity result for $r\in(4,5)$.
In the first term in~\eqref{eq:diff}, we have to estimate expressions like
 $\int_{-A}^0 u_0^{1/p}\hj_0
 \le \int_{-A}^0 (u_0\hj_0)^{1/p}
 = \int_{-A}^0 (\hj_0')^{1/p}$ 
where $p=(r-1)/(2(r-3))>1$.
Setting $q=(r-1)/(5-r)$ and applying H\"older's inequality,
\[
 \int_{-A}^0 (\hj_0')^{1/p}
\le  \left(\int_{-A}^0 \hj_0'\,dt\right)^{1/p}\,A^{1/q} 
\le   A^{1/q}=A^{(5-r)/(r-1)}.
\]
Taking $A=\eps^{-(r-1)/4}$, we obtain
\[
|k_+(\gamma_0'(0))-k_+(\gamma_1'(0))|
\ll A\eps^2+A^{(5-r)/(r-1)}\eps +A^{-1} \le 3 \eps^{(r-1)/4}.
\]
Hence,
the curves $\hW_x^{s/u}$ are uniformly $C^{1+\frac14 (r-1)}$ which improves the estimate $C^{1+\frac12}$ in~\cite[Proposition~III]{GerberWilkinson99} for all $r\in(4,5)$. However, we do not obtain $C^{1+\Lip}$.
\end{rmk}

We will say that a function $g:M_1\to M_2$ between two metric spaces 
$(M_1,d_1)$ and $(M_2,d_2)$ has {\em $\omega$-logarithmic modulus of continuity} ($\omega>0$) if
there is a constant $C>0$ such that $d_2(g(x),g(y))\le C|\log d_1(x,y)|^{-\omega}$ for all
$x,y\in M_1$ with $d_1(x,y)\le C$.

\begin{thm} \label{thm:GW2}
Assume $r>4$. Then $x\mapsto \hE_x^{s/u}$ has $\omega$-logarithmic modulus of continuity
for all $\omega\in(0,r/2)$.
\end{thm}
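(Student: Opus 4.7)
The plan is to adapt the Riccati comparison argument underlying Theorem~\ref{thm:GW1} to pairs $x_0,x_1\in T^1S$ that are only $\eps$-close in the Sasaki metric, rather than lying on a common stable leaf. Since $\hE^u_x$ is determined by the unstable curvature $k_+(x)=u(0)$, with $u$ the unstable Riccati solution along $\gamma_x$, it suffices to bound $|k_+(x_0)-k_+(x_1)|$; the case of $\hE^s$ will be identical. Lifting $x_0,x_1$ to $T^1\tS$ with $\dist_\tS(\gamma_0(0),\gamma_1(0))\le\eps$, the difference $v=u_0-u_1$ satisfies $v'=-(u_0+u_1)v-(K_0-K_1)$, whose integration on $[-A,0]$ yields
\[
|v(0)|\le \hj_0(-A)\hj_1(-A)\,|v(-A)|+\int_{-A}^0\hj_0(t)\hj_1(t)\,|K_0(t)-K_1(t)|\,dt
\]
for any $A>0$, where $\hj_i(t)=\exp\{-\int_t^0 u_i\}$. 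Exactly as in Theorem~\ref{thm:GW1}, the boundary term is $\ll 1/A$ via $\hj_i'(-A)\le 1/A$ together with $|v(-A)|\le u_0(-A)+u_1(-A)$.

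For the integral, Corollary~\ref{cor:KLip} bounds $|K_0-K_1|$ in terms of $d(t)=\dist_\tS(\gamma_0(t),\gamma_1(t))$, which is convex and by standard Jacobi field comparison in nonpositive curvature satisfies $d(t)\le\eps e^{C|t|}$ on $[-A,0]$. The naive combination of these bounds with the trivial weight $\hj_0\hj_1\le 1$ delivers only $\omega=1$. To push to $\omega<r/2$, I will decompose $[-A,0]$ according to whether the geodesic pair lies in the flat cylinder $\cC$ (where $|K_0-K_1|=0$, so no contribution), in the neck $\cN$ (where the sharper Lipschitz estimate $|K_0-K_1|\ll|K|^{(r-3)/(r-2)}d$ from Lemma~\ref{lem:KLip}(2) applies, together with the polynomial lower bound $k_\pm(x)\ge C^{-1}a^{(r-1)/2}$ from Lemma~\ref{lem:key}(4) to control $\hj_0\hj_1$), or outside $\cR$ (where uniform hyperbolicity provides exponential decay of $\hj_0\hj_1$). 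The transition time estimates of Lemma~\ref{lem:Cn}(1) simultaneously limit the duration of each excursion into $\cR$. Together these refinements should yield an integral bound of the form $\ll\eps A^{\beta}$ for an explicit $\beta$ depending on $r$, rather than the wasteful $\eps e^{CA}$; balancing this against the $1/A$ boundary term by taking $A$ to be an appropriate power of $|\log\eps|$ then gives $|v(0)|\ll|\log\eps|^{-\omega}$ for any $\omega<r/2$.

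The hard part will be the case analysis underlying the polynomial-in-$A$ bound for the integral, and in particular the worst case where the backward pair of geodesics remains inside $\cR$ through essentially the entire window $[-A,0]$ in a single long bouncing or crossing excursion in a homogeneity band $\mfC_n$ with $n$ comparable to $A$. In that regime $\hj_0\hj_1$ barely decays, so all of the smallness must come from the size of the curvature in $\cN$ (through Lemma~\ref{lem:KLip}(2)) and the at-most-linear divergence of nearby geodesics through $\cC$. The exponent $r/2$ will emerge from the balance point of the refined integral and boundary contributions, with the strict inequality $\omega<r/2$ reflecting the loss of an additional logarithmic factor at the optimum.
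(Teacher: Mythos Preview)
Your plan has a structural gap. If the integral were really bounded by $\eps A^\beta$ for some fixed $\beta$, you could balance against the $1/A$ boundary term by taking $A=\eps^{-1/(\beta+1)}$ and conclude \emph{H\"older} continuity of $k_+$, which is strictly stronger than the logarithmic modulus the theorem asserts. Your alternative choice $A\approx|\log\eps|^\gamma$ makes the integral negligible for every $\gamma$, so the bound becomes $|\log\eps|^{-\gamma}$ with $\gamma$ unconstrained; nothing in your argument singles out the threshold $r/2$. This already signals that the polynomial-in-$A$ integral estimate cannot hold as stated. Concretely, once the backward geodesic pair leaves $\cR$ into the strictly negatively curved part of $S$, the divergence $d(t)\le \eps e^{C|t|}$ is governed by $\sup\sqrt{-K}$, while the decay rate of $\hj_0\hj_1$ is governed by $\inf u_i$; there is no reason the latter dominates, and the $d(t)^2$ term in Corollary~\ref{cor:KLip} then contributes $\eps^2 e^{2CA}$, which is not polynomial in $A$.

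The paper's proof proceeds quite differently. It first reduces to $x_0,x_1$ with the \emph{same footprint} and angle $\theta$ between them, then takes the specific $T\in(0,\theta^{-1/2}]$ of \cite[Eq.~(3.14)]{GerberWilkinson99} rather than a free horizon $A$. On $[-T,0]$ the integral is bounded by $\theta^{q/2}$ via Corollary~\ref{cor:KLip} and H\"older as in the remark after Theorem~\ref{thm:GW1}. The boundary term is not simply estimated by $1/T$; instead one splits into cases: if $T\ge|\log\theta|^\omega$ the $1/T$ bound suffices, and otherwise one keeps the full exponential factor $\exp\{-\int_{-T}^0 u_0\}$ and invokes Corollary~\ref{cor:key} to get $u_0(-t)\gg u_0(t)^p$ with $p=r/(r-2)$, together with the \cite{GerberWilkinson99} input $\int_1^T u_0\gg|\log\theta|$. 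A final H\"older inequality with conjugate exponent $p'=r/2$ converts this into the bound $\exp\{-\beta'|\log\theta|^s\}$, and this is where the threshold $r/2$ actually enters. Your location-based decomposition (cylinder/neck/exterior) and the transition-time lemma do not substitute for this mechanism.
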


\begin{proof}
As in~\cite{GerberWilkinson99}, we reduce to the case where
$x_0,\,x_1\in T^1S$ have the same footprint.  Let
$\theta\ge0$ be the angle between them; without loss of generality $\theta<1$.
Define $T\in(0,\theta^{-1/2}]$ as in~\cite[Eq.~(3.14)]{GerberWilkinson99}. 
Without loss of generality $T>1$.

Let $u_0(t)$, $u_1(t)$ be the corresponding unstable Riccati solutions.
As in the proof of Theorem~\ref{thm:GW1},
\begin{align*} 
|k_+(x_0)- & k_+(x_1)|  =|u_0(0)-u_1(0)|
\\ & \le \int_{-T}^0 |K_0(t)-K_1(t)|\hj_0(t)\hj_1(t)\,dt+ |u_0(-T)-u_1(-T)|\hj_0(-T)\hj_1(-T).
\end{align*}
By Corollary~\ref{cor:KLip},
\begin{align*}
\int_{-T}^0 |K_0-K_1|\hj_0\hj_1
& \ll T\theta+\theta^{1/2}\int_{-T}^0 (k_+(x_0)^q+k_+(x_1)^q)\hj_0\hj_1
\\ & \ll \theta^{1/2}\left\{1+
\int_{-T}^0 (\hj_0')^q+ \int_{-T}^0 (\hj_1')^q\right\}.
\end{align*}
By H\"older's inequality, 
$\int_{-T}^0 (\hj_i')^q\le \left(\int_{-T}^0j_i'\right)^q T^{1-q}
\ll T^{1-q}\ll \theta^{-(1-q)/2}$.
Hence, 
$\int_{-T}^0 |K_0-K_1|\hj_0\hj_1 \ll \theta^{q/2}$ and
\begin{align} \label{eq:k+}   
|k_+(x_0)- k_+(x_1)| 
 \le \theta^{q/2}+|u_0(-T)-u_1(-T)|\hj_0(-T)\hj_1(-T).
\end{align}

As in the proof of Theorem~\ref{thm:GW1}, we thereby obtain
\(
|k_+(x_0)-k_+(x_1)|\ll \theta^{q/2}+T^{-1}.
\)
If $T\ge |\log\theta|^\omega$, then we are finished.
Hence we can suppose from now on that 
\begin{equation} \label{eq:T}
1<T<|\log\theta|^\omega.
\end{equation}
If $k_+(x_i)\le\theta^{q/4}$ for $i=0,1$, then again there is nothing to prove, so we can suppose without loss of generality that
\begin{equation}\label{eq:k++}
 k_+(x_0)\ge \theta^{q/4}.
 \end{equation}

Continuing from~\eqref{eq:k+}, we obtain
\[
|k_+(x_0)-k_+(x_1)|\ll \theta^{q/2}+
\exp\left\{-\int_{-T}^0 u_0\right\}.
\]
By Corollary~\ref{cor:key}, $u_0(-t)\gg u_0(t)^p$ where $p=r/(r-2)$.
Hence, our analogue of~\cite[Eq.~(3.17)]{GerberWilkinson99} is that
there exists $\beta>0$ such that
\[
|k_+(x_0)-k_+(x_1)|  \ll 
\theta^{q/2}+
\exp\left\{-\beta\int_{0}^T u_0^p\right\}
 \le \theta^{q/2}+
\exp\left\{-\beta\int_{1}^T u_0^p\right\}.
\]
The remainder of the argument in~\cite{GerberWilkinson99} shows that
\[
\exp\left\{-\int_{1}^T u_0\right\}\ll k_+(x_0)^{-1}\theta^{q/2}\ll\theta^{q/4},
\]
where in the second inequality we used \eqref{eq:k++}.
(The arguments in~\cite{GerberWilkinson99} are unchanged except for the derivation of~\cite[Eq.~(3.22)]{GerberWilkinson99}.
Instead of applying~\cite[Lemma~3.5]{GerberWilkinson99}, the same argument used to obtain~\eqref{eq:k+} above shows that, in their notation, $|\tilde y(t)|\ll\theta^{q/2}+|\tilde y(1)|$.)
Hence 
\begin{equation} \label{eq:GW2}
\int_1^T u_0\gg |\log\theta|.
\end{equation}

Let $p'=r/2$.
By H\"older's inequality, 
$\int_1^T u_0\le \|u_0\|_p \, T^{1/p'}$, thus by~\eqref{eq:T} and~\eqref{eq:GW2},
\[
\int_1^T u_0^p \ge \left(\int_1^T u_0\right)^pT^{-p/p'}
\gg |\log\theta|^{p(1-\omega/p')}.
\]
This implies that there is a constant $\beta'>0$ such that
\[
|k_+(x_0)-k_+(x_1)|  
\ll  \theta^{q/2}+ \exp\big\{-\beta'|\log\theta|^{p(1-\omega/p')}\big\}
=  \theta^{q/2}+ \exp\big\{-\beta'|\log\theta|^s\big\}
\]
where $s=p(1-\omega/p')>0$ since $\omega<p'=r/2$.
Using the inequality $e^{-x}\ll x^{-m}$ which holds for any fixed $m>0$, we see (taking $m>\omega/s$) that
\[
\exp\big\{-\beta'|\log\theta|^s\big\} \ll 
|\log\theta|^{-sm}\le
|\log\theta|^{-\omega}
\]
completing the proof.
\end{proof}

\section{The first return map $f=g_\Upsilon:\Sigma_0\to\Sigma_0$}
\label{sec:f}

In this section, we construct a first hit Poincar\'e map $f=g_\Upsilon:\Sigma_0\to\Sigma_0$.
The map $f$ has unbounded excursions through the surface of revolution $\cR$ but is bounded elsewhere. 
We then obtain some hyperbolicity properties of $f$ and prove that it satisfies the Chernov axioms listed in Appendix~\ref{app:C},
thereby showing that $f$ is modelled by a Young tower with exponential tails~\cite{Young98}. 

\subsection{The cross-section $\Sigma_0$ and flux measure $\mu_{\Sigma_0}$}
\label{sec:flux}

Roughly speaking, the section $\Sigma_0$ consists of
$\Omega_{\rm in}$ and 
$\Omega_{\rm out}$ together with various local sections situated in the negatively curved region $T^1(S\setminus \cR)$.
However, for technical reasons explained in~\cite{LMM24}, the cross-sections 
$\Omega_{\rm in/out}$ are not suitable to be part of $\Sigma_0$;
instead nearby cross-sections (called $\Sigma_{\rm in,out}$ below)  contained in $g_{[-\chi,\chi]}\Omega_{\rm in/out}\subset T^1\cR$ 
(for $\chi>0$ fixed sufficiently small) are used in the definition of  $\Sigma_0$.
The cross-section $\Sigma_0$ is constructed in such a way that $\inf_{\Sigma_0}\Upsilon>0$ and
$\Upsilon$ is bounded on $\Sigma_0$ except in a neighbourhood of asymptotic vectors.

The precise construction of the section $\Sigma_0$ works as in \cite{LMM24}, since here we also have that
the only source of unboundedness of excursions in $\cR$ occurs near the subset $\{|c|=1\}\cap\Omega_{\rm in}$.
The main properties are that (i)
$\Sigma_0$ is almost perpendicular to the flow direction, and (ii)
a large number of iterates of the singular set under the first return map $f$ to $\Sigma_0$
do not have triple intersections.

Recall that $\mu$ is the smooth probability measure on $M$ induced by the Riemannian metric on $S$.
Locally, it is the product of the area form on $S$ and Lebesgue measure on $\bbS^1$.
The \emph{flux measure}\footnote{This measure is usually not equal to the measure induced by the restriction of the Riemannian metric to $\Sigma_0$.}
is defined on $\Sigma_0$ by the equality
$A(X)=\lim_{t\to 0}t^{-1}\mu(g_{[0,t]}X)$ for $X\subset \Sigma_0$, 
and thus we obtain a smooth probability measure $\mu_{\Sigma_0}=(A(\Sigma_0))^{-1}A$ on $\Sigma_0$.
By definition, given two sections transverse to the flow, the holonomy map in the flow direction
from the first section to the second sends the flux measure of the first to the flux measure of the second.
Therefore, $\mu_{\Sigma_0}$
is invariant under the first return map to $\Sigma_0$. It is for this measure that we will check
the Chernov axioms\footnote{Although not explicitly stated, it was for $\mu_{\Sigma_0}$ that we verified
the Chernov axioms in \cite{LMM24}.}.

Given a compact interval $J\subset (0,\pi)$,
the section $\{-L\}\times \bbS^1\times J$ in Clairaut coordinates is transverse to the flow. Note that $g_t(-L,\theta,\psi)=(-L+t\sin\psi,\theta+t\cos\psi,\psi)$ for $t\ge0$ sufficiently small. Hence
$$
g_{[0,t]}(B\times C)=\bigcup_{\psi\in C}B_{\psi,t}\times\{\psi\}
\quad\text{ for $B\subset\bbS^1$ and $C\subset J$},
$$
where $B_{\psi,t}$ is a parallelogram of base length $|B|$ and height $t\sin\psi$. It follows by Fubini
that the flux measure on $\{-L\}\times \bbS^1\times J$ is equal to $\sin\psi\, d\theta\, d\psi$.

\subsection{The first return map $f=g_\Upsilon$}\label{ss:induced-def}

The \emph{first return time function} of $\Sigma_0$ is 
\[
\Upsilon:\Sigma_0\to (0,\infty], \qquad
\Upsilon(x)=\inf\{t>0:g_tx\in\Sigma_0\}.
\]
By construction, all flow trajectories intersect $\Sigma_0$ infinitely often except those forward and backward asymptotic to the cylinder $\cC$.
We have $\Upsilon(x)=\infty$ if and only if $x$ is an asymptotic vector, hence $\{\Upsilon=\infty\}$ is a finite
union of compact curves, each contained in $g_{[-\chi,\chi]}(\{|c|=1\}\cap\Omega_{\rm in})$.

The \emph{first return map $f=g_\Upsilon:\Sigma_0\to\Sigma_0$} inherits the regularity of  the flow $g_t$; hence it is $C^2$ on $\Sigma_0\setminus\{\Upsilon=\infty\}$ and it has uniformly bounded
derivatives away from $\{\Upsilon=\infty\}$. Since $\Sigma_0$ is almost perpendicular to the flow direction,
the hyperbolicity properties of $f$ away from $\{\Upsilon=\infty\}$ are almost the same as those of the flow.

To maintain control near $\{\Upsilon=\infty\}$, we partition into homogeneity bands,
similarly to Definition~\ref{defn:Cn}.
For this, fix a sufficiently large integer $n_0$
(how large $n_0$ is will depend on a finite number of conditions,
which include the validity of Lemma~\ref{lem:hyperbolicity}(4) and the verification of Chernov axiom~(A8) in Theorem~\ref{thm:Chernov}).

\begin{defn} \label{defn:Dn}
We define \emph{homogeneity bands on $\Sigma_0$}
given by $\mfD_n=\mfD_n^{>}\cup\mfD_n^{<}$, $n\ge n_0$, where
\begin{align*}
\mfD_n^{>}&=\big\{x\in\Int(\Sigma_0)\cap g_{[-\chi,\chi]}\Omega_{\rm in}: 1+\tfrac{1}{(n+1)^2}<|c(x)|< 1+\tfrac{1}{n^2}\big\};\\
\mfD_n^{<}&=\big\{x\in\Int(\Sigma_0)\cap g_{[-\chi,\chi]}\Omega_{\rm in}: 1-\tfrac{1}{n^2}< |c(x)|<1-\tfrac{1}{(n+1)^2}\big\}.
\end{align*}
\end{defn}

\begin{defn}
The \emph{primary singular sets}
$\mfS_{\rm P}^\pm$ are defined as
\begin{align*}
\mfS_{\rm P}^+ & =\{x\in\Sigma_0:\Upsilon(x)<\infty\text{ and }g_{\Upsilon(x)}(x)\in\partial\Sigma_0\}\cup\{\Upsilon = \infty\};
\\[.75ex]
\mfS_{\rm P}^- & =\{x\in\Sigma_0:\Upsilon_-(x)>-\infty\text{ and }g_{\Upsilon_-(x)}(x)\in\partial\Sigma_0\}\cup\{\Upsilon_- = -\infty\};
\end{align*}
where
\(
\Upsilon_-(x)=\sup\{t<0:g_t(x)\in\Sigma_0\}.
\)
The \emph{secondary singular sets}
$\mfS_{\rm S}^\pm$ are
$$
\mfS_{\rm S}^+=\bigcup_{n\geq n_0}\partial\mfD_n
\qquad\text{and}\qquad 
\mfS_{\rm S}^-=\{g_{\Upsilon(x)}(x):x\in\mfS_{\rm S}^+\}.
$$
\end{defn}
Let $\mfS^+=\mfS_{\rm P}^+\cup\mfS_{\rm S}^+$, and
$\mfS^-=\mfS_{\rm P}^-\cup\mfS_{\rm S}^-$.
Then
$f(\Int(\Sigma_0)\setminus\mfS^+)= \Int(\Sigma_0)\setminus\mfS^-$.

Define
\begin{align*}
&\Sigma_{\rm in}=\big\{x\in\Sigma_0\cap g_{[-\chi,\chi]}\Omega_{\rm in}:
\big||c(x)|-1\big|<\tfrac{1}{n_ 0^2}\text{ and }fx\in g_{[-\chi,\chi]}\Omega_{\rm out}\big\}; \\
&\Sigma_{\rm out}=\big\{x\in\Sigma_0\cap g_{[-\chi,\chi]}\Omega_{\rm out}:
\big||c(x)|-1\big|<\tfrac{1}{n_ 0^2}\text{ and }f^{-1}x\in g_{[-\chi,\chi]}\Omega_{\rm in}\big\}.
\end{align*}
Then $f(\Sigma_{\rm in})=\Sigma_{\rm out}$.

The maps
$\mathfrak p_{\rm in}:\Sigma_{\rm in}\to\Omega_{\rm in}$ and $\mathfrak t_{\rm in}:\Sigma_{\rm in}\to [-\chi,\chi]$ are defined by the
equality $z=g_{\mathfrak t_{\rm in}(z)}[\mathfrak p_{\rm in}(z)]$ for $z\in \Sigma_{\rm in}$.
The maps $\mathfrak p_{\rm out}:\Sigma_{\rm out}\to\Omega_{\rm out}$ and
$\mathfrak t_{\rm out}:\Sigma_{\rm out}\to [-\chi,\chi]$ are defined analogously.
They have the same regularity of $g_t$, hence they are $C^2$.
The map $\mathfrak p_{\rm in}$ is surjective. It is also injective,
because if $x\in \Sigma_{\rm in}$ then $x,fx$ are uniquely characterised as being the starting/ending point
of the transition in $\cR$. Therefore, $\mathfrak p_{\rm in}$ is a bijection. 
Since $\Sigma_0$ and $\Omega$ are uniformly transverse to the flow direction,
$\|d\mathfrak p_{\rm in}^{\pm 1}\|\approx 1$.
By symmetry,
the same properties hold for $\mathfrak p_{\rm out}$. 

Recalling the definition of $\mfC_n$ in Definition \ref{defn:Cn}, we note that
$\mfC_n=\mathfrak p_{\rm in}(\mfD_n)$ for all $n\geq n_0$
and $f|_{\Sigma_{\rm in}}=\mathfrak p_{\rm out}^{-1}\circ f_0\circ\mathfrak p_{\rm in}$.

The subspaces $E^{s/u}$ are defined in $\Sigma_{\rm in/out},\,\Omega_{\rm in/out}$ as the projections of
$\hE^{s/u}$ onto the respective tangent spaces, and the maps $\mathfrak p_{{\rm in/out}}$ preserve
these subspaces. Since $\|d\mathfrak p_{{\rm in/out}}^{\pm 1}\|\approx 1$, we obtain that
$\|df|_{E^{s/u}_x}\|\approx \|df_0|_{E^{s/u}_{\mathfrak p_{\rm in}(x)}}\|$
for $x\in\Sigma_{\rm in}$. 

\subsection{Excursion times}\label{ss:excursions}

The first return time $\Upsilon:\Sigma_0\to\R^+$ is bounded on $\Sigma_0\setminus \Sigma_{\rm in}$
and inherits the smoothness of the underlying flow.
Here, we are interested in the regularity of the time
$\Upsilon|_{\Sigma_{\rm in}}$ taken to pass from $\Sigma_{\rm in}$ to $\Sigma_{\rm out}$.

Recall that $2\Upsilon_0$ is the transition time from
$\Omega_{\rm in}$ to $\Omega_{\rm out}$.  
Let $\x=\x(t)$ be a bouncing/crossing geodesic undergoing an excursion in the surface of revolution $\cR$.
Since $f|_{\Sigma_{\rm in}}=\mathfrak p_{\rm out}^{-1}\circ f_0\circ\mathfrak p_{\rm in}$, we have the relation
$\Upsilon(\x)=-\mathfrak t_{\rm in}(x)+2\Upsilon_0(\x)+\mathfrak t_{\rm out}(fx)$,
where $x$ is the starting point of $\x$ in $\Sigma_{\rm in}$.

\begin{lemma} \label{lem:Ups}
Let $\x, \overline{\x}$ be both bouncing or both crossing geodesics with entry vectors $x,\overline{x}\in\Sigma_{\rm in}$
such that $\overline{x}\in W^{s/u}_x$.
Then
$$
|\Upsilon(\x)-\Upsilon(\overline{\x})|\ll d(x,\overline{x})+d(fx,f\overline{x}).
$$
\end{lemma}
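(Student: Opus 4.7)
The plan is to split the return time using the identity $\Upsilon(\x)=-\mathfrak t_{\rm in}(x)+2\Upsilon_0(\x)+\mathfrak t_{\rm out}(fx)$ stated just before the lemma, and to bound each of the three resulting differences separately. The outer contributions are immediate: since $\mathfrak t_{\rm in}$ and $\mathfrak t_{\rm out}$ inherit the $C^2$ smoothness of the flow and have uniformly bounded derivatives, they are Lipschitz, giving $|\mathfrak t_{\rm in}(x)-\mathfrak t_{\rm in}(\overline x)|\ll d(x,\overline x)$ and $|\mathfrak t_{\rm out}(fx)-\mathfrak t_{\rm out}(f\overline x)|\ll d(fx,f\overline x)$. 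The work lies in the central term.

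For $|\Upsilon_0(\x)-\Upsilon_0(\overline\x)|$, I plan to compare $2\Upsilon_0$ with the angular shift $\zeta$ from Subsection~\ref{ss:Omega}. Both are functions of the Clairaut constant $c$ alone (within a fixed bouncing or crossing regime), given by the parallel integrals
\[
2\Upsilon_0(c)=2\int_{|s(0)|}^{\ve_1}\frac{\xi\sqrt{1+\xi'^2}}{\sqrt{\xi^2-c^2}}\,ds,\qquad
\zeta(c)=2c\int_{|s(0)|}^{\ve_1}\frac{\sqrt{1+\xi'^2}}{\xi\sqrt{\xi^2-c^2}}\,ds,
\]
whose integrands, near $|c|=1$ (where $\xi\approx 1$), differ by only a bounded multiplicative factor. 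Repeating the differentiation-under-the-integral-sign computation from the proof of Lemma~\ref{lem:Cn}(3) then yields $|(2\Upsilon_0)'(c)|\approx|\zeta'(c)|$ in each homogeneity band, with common orders $n^3$ on $\mfC_n^<$ and $n^{3-2/r}$ on $\mfC_n^>$. The hypothesis $\overline x\in W^{s/u}_x$ confines both $x$ and $\overline x$ to a single connected component of a single band $\mfD_n$ (the local stable and unstable manifolds do not cross the secondary singular set $\mfS_{\rm S}^\pm$), so the mean value theorem delivers $|\Upsilon_0(\x)-\Upsilon_0(\overline\x)|\ll|\zeta(c(\x))-\zeta(c(\overline\x))|$.

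To convert the right-hand side into distances on $\Sigma_0$, I will use the formula $f_0(-\ve_1,\theta,\psi)=(\pm\ve_1,\theta+\sigma\zeta(\psi),\pm\psi)$ from Subsection~\ref{ss:Omega} (with $\sigma\in\{\pm1\}$ fixed on $\mfD_n$). Lifting $\theta$ continuously along each geodesic to $\R$, which is legitimate because the winding number is locally constant on $\mfD_n$, one reads off $\sigma\zeta(\psi)=\theta(\mathfrak p_{\rm out}(fx))-\theta(\mathfrak p_{\rm in}(x))$. Subtracting the corresponding identity for $\overline\x$ and invoking $\|d\mathfrak p_{\rm in/out}^{\pm1}\|\approx 1$ produces
\[
|\zeta(c(\x))-\zeta(c(\overline\x))|\le |\theta(\mathfrak p_{\rm out}(fx))-\theta(\mathfrak p_{\rm out}(f\overline x))|+|\theta(\mathfrak p_{\rm in}(x))-\theta(\mathfrak p_{\rm in}(\overline x))|\ll d(x,\overline x)+d(fx,f\overline x),
\]
closing the argument. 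The main technical point is the derivative comparison $|(2\Upsilon_0)'(c)|\approx|\zeta'(c)|$, which is slightly delicate in the bouncing case because the lower limit $s_0(c)$ itself depends on $c$; nevertheless, the dominant contribution to each derivative comes from differentiating the common singular factor $(\xi^2-c^2)^{-1/2}$, with the multipliers $\xi$ and $1/\xi$ remaining bounded near the asymptotic set, so the orders match band by band.
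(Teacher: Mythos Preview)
Your argument is correct and is precisely the one carried out in \cite[Lemma~5.11]{LMM24}, to which the present paper defers: decompose $\Upsilon=-\mathfrak t_{\rm in}+2\Upsilon_0+\mathfrak t_{\rm out}\circ f$, dispose of the outer terms by the $C^2$ regularity of the holonomies, and control $|\Upsilon_0(\x)-\Upsilon_0(\overline\x)|$ by comparing the $c$-derivatives of $2\Upsilon_0$ and $\zeta$ (indeed $(2\Upsilon_0)'(c)=c\,\zeta'(c)$ exactly in the crossing case) before reading off $|\zeta(\psi)-\zeta(\bar\psi)|$ from the $\theta$-component of $f_0$. One small caveat: your assertion that LUMs do not cross $\mfS_{\rm S}^+$ does not follow from the definition (LUMs are only required to avoid $\mfS^-$), but this is harmless since the pointwise bound $|(2\Upsilon_0)'|\ll|\zeta'|$ is uniform and Cauchy's mean value theorem applies regardless of how many bands the curve traverses.
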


\begin{proof}
The proof is the same as for \cite[Lemma 5.11]{LMM24}.
\end{proof}

\subsection{Hyperbolicity properties of $f$ on $\Sigma_{\rm in}$}\label{ss:hyperbolicity-f}

We now establish some hyperbolicity properties of $f|_{\Sigma_{\rm in}}$
with respect to the $\delta$-Sasaki metric $\|\cdot\|$
for a small $\delta>0$. (Recall that this metric is equivalent to the Sasaki metric 
and also to the Clairaut metric.)

A \emph{local unstable manifold (LUM)} is a curve $W\subset \Sigma_0$
such that
$f^{-n}$ is well-defined and smooth on $W$ for all $n\geq 0$, and
$d(f^{-n}x,f^{-n}y)\to 0$ exponentially quickly as $n\to\infty$ for all $x,y\in W$.
We write $W^u_x$ to represent an LUM containing $x$.
Similarly, we define the notion of \emph{local stable manifold (LSM)}
and write $W^s_x$ to represent an LSM containing $x$.

\begin{lemma}\label{lem:hyperbolicity}
Let $r\ge 5$, $\omega\in(1,r/2)$.  The following are true:
\begin{description}
\item[{\rm (1)}] Lebesgue almost every $x\in\Sigma_0$ has an LSM/LUM $W^{s/u}_x$ for $f$.
\item[{\rm (2)}] 
For all $x,\overline{x}\in \Sigma_{\rm in}$,
\begin{enumerate}
\item[{\rm (a)}] There is a continuous function $a:\Sigma_{\rm in}\to\R$ such that
$E^u_x$ is spanned by $\begin{bmatrix}a(x) \\ 1\end{bmatrix}$.
Moreover, 
$|a(x)-a(\bar x)|\ll |\log d(x,\overline{x})|^{-\omega}$. 
\item[{\rm (b)}] There is a $C^{1+{\rm Lip}}$ function $\Theta$ such that
$W^u_x$ is locally the graph $\{(\Theta(\psi),\psi)\}$ of $\Theta$.
\end{enumerate}
\item[{\rm (3)} Growth bounds:] {\color{white} a}
\begin{enumerate}
\item[{\rm (a)}] If $x\in\mfD_n^>$, then $\|df|_{E^u_x}\|\approx n^{3-\frac{2}{r}}$.
\item[{\rm (b)}] If $x\in\mfD_n^<$, then $\|df|_{E^u_x}\|\approx n^3$.
\end{enumerate}
\item[{\rm (4)} Distortion bounds:] If $x,\overline{x}\in\mfD_n^>$ or $x,\overline{x}\in\mfD_n^<$,
with $\overline{x}\in W^u_x$, then
$$
\big|\log \|df|_{E^u_x}\|-\log \|df|_{E^u_{\overline x}}\|\big| \ll
d(fx, f \overline{x})^\frac13.
$$
\item[{\rm (5)} Jacobian of holonomies:]
If $x,\overline{x}\in\mfD_n^>$ or $x,\overline{x}\in\mfD_n^<$,
with $\overline{x}\in W^s_x$, then
\[
\big|\log \|df|_{E^u_x}\|-\log \|df|_{E^u_{\overline{x}}}\|\big|\ll |\log d(x,\overline{x})|^{-\omega}.
\]
\end{description}
\end{lemma}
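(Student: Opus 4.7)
The overall strategy is to transfer everything from the explicit transition map $f_0$ studied in Section~\ref{sec:R-dynamics} to the first return map $f$ via the bijections $\mathfrak p_{\rm in/out}$, and to combine this with the foliation regularity results of Section~\ref{sec:GW}.

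For parts~(1) and~(2), I would argue that local stable/unstable manifolds for $f$ exist at a.e.\ point by projecting the flow manifolds $\hW^{s/u}_x$ onto $\Sigma_0$ along the flow direction. Since $\Sigma_0$ is uniformly transverse to the flow and the curves $\hW^{s/u}_x$ are uniformly $C^{1+\Lip}$ by Theorem~\ref{thm:GW1}, the projected curves $W^{s/u}_x$ inherit $C^{1+\Lip}$ regularity, giving~(1) and~(2)(b). To treat~(2)(a), I would write $T_x\Sigma_{\rm in}$ in the Clairaut coordinates $(\theta,\psi)$ at $s=-\ve_1$. The subspace $E^u_x$ obtained by projecting $\hE^u_x$ is uniformly transverse to the $\theta$-axis (since the unstable direction of the flow does not collapse onto the flow direction on the transverse section), so it is the span of a unique vector $(a(x),1)$. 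The $\omega$-logarithmic modulus of continuity of $\hE^u$ from Theorem~\ref{thm:GW2}, composed with the $C^2$ map $\mathfrak p_{\rm in}^{-1}$, gives the asserted modulus for $a$.

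For parts~(3) and~(4), I would start from the explicit formula for $f_0$ given in Subsection~\ref{ss:Omega}, whose differential in $(\theta,\psi)$-coordinates is upper triangular with entries $1$, $\zeta'(\psi)$ on the top row and $0$, $\pm 1$ on the bottom row. Applied to the unstable vector $(a(\mathfrak p_{\rm in}(x)),1)$ the image is $(a+\zeta'(\psi),\pm1)$; since $|a|$ is bounded while $|\zeta'|$ is large by Lemma~\ref{lem:Cn}(3), the expansion factor is comparable to $|\zeta'(\psi)|$. Invoking Lemma~\ref{lem:Cn}(3)(a),(b) on $\mathfrak p_{\rm in}(\mfD_n^{>/<})=\mfC_n^{>/<}$, together with $\|d\mathfrak p_{\rm in/out}^{\pm 1}\|\approx 1$, delivers part~(3). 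For~(4), I would bound $\bigl|\log|\zeta'(\psi)|-\log|\zeta'(\bar\psi)|\bigr|$ by $\sup|\zeta''/\zeta'|\cdot|\psi(x)-\psi(\bar x)|$; by Lemma~\ref{lem:Cn}(3) this ratio is $\approx n^2$ in both homogeneity bands, and the growth bound~(3) with the formula for $f_0$ yields $|\psi(x)-\psi(\bar x)|\ll |\zeta'|^{-1}d(fx,f\bar x)$. Combining gives distortion $\ll n^{-(1-2/r)}d(fx,f\bar x)$, which is comfortably $\ll d(fx,f\bar x)^{1/3}$ since $r\ge 5$ and $d(fx,f\bar x)$ is bounded.

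For part~(5), the difference $\log\|df|_{E^u_x}\|-\log\|df|_{E^u_{\bar x}}\|$ splits into a contribution from the variation of $|\zeta'|$ along the base (treated as in~(4), where $|\psi(x)-\psi(\bar x)|\ll d(x,\bar x)$ yields a bound polynomially small in $d(x,\bar x)$) and a contribution from the rotation of the line $E^u$. The second is the dominant term: by Theorem~\ref{thm:GW2}, $|a(x)-a(\bar x)|\ll |\log d(x,\bar x)|^{-\omega}$, and a perturbation analysis of $df$ evaluated on the slowly rotating unstable line converts this directly into the same bound on the logarithmic difference of norms. The main obstacle is~(5): the base-point contribution is only polynomially small, whereas the direction contribution is merely logarithmically small, so the log-modulus of Theorem~\ref{thm:GW2} is essential, and one must combine it carefully with the band-by-band growth estimates to verify that no hidden term exceeds $|\log d(x,\bar x)|^{-\omega}$. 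In particular, the requirement $\omega>1$ (satisfiable for any $r>4$, hence for our $r\ge 5$) is what makes the resulting bound useful for the Chernov axiom~(A7).
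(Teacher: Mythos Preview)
Your proposal is correct and follows essentially the same route as the paper: the paper's proof consists of pointers to the corresponding lemmas in \cite{LMM24} (Lemmas~5.6--5.10 there), noting that the only changes are (i) replacing the H\"older regularity of $a$ by the $\omega$-logarithmic modulus from Theorem~\ref{thm:GW2}, and (ii) using the updated $\zeta',\zeta''$ estimates from Lemma~\ref{lem:Cn}(3), which still yield $|\zeta''|/|\zeta'|\ll n^2$. Your outline identifies exactly these ingredients and combines them in the same way; the only point worth making explicit in~(4) and~(5) is that, besides the $\zeta'$-variation you isolate, there are bounded contributions from the $C^2$ maps $\mathfrak p_{\rm in/out}$ and from the variation of $a$ (Lipschitz along $W^u_x$ by~(2)(b), log-modulus across leaves by~(2)(a)), and in~(5) the $\zeta'$-term $n^2|\psi(x)-\psi(\bar x)|$ is controlled by splitting on whether $d(x,\bar x)\lessgtr n^{-3}$ using that the $\psi$-width of $\mfD_n$ is $\approx n^{-3}$.
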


\begin{proof} (1) This is \cite[Lemma 5.6]{LMM24}, which works equally for surfaces with flat cylinder.

\medskip
\noindent
(2) This is \cite[Lemma 5.7]{LMM24}, which is almost unchanged here. The only difference is the regularity of $a$ which follows from Theorem~\ref{thm:GW2} in place of~\cite[Theorem~2.6]{LMM24}.

\medskip
\noindent
(3) The proof is the same of \cite[Lemma 5.8]{LMM24}, noting that the estimates of $\zeta'$ are given by Lemma~\ref{lem:Cn}(3).
Regarding regularity of $a$, only continuity is used for this estimate.

\medskip
\noindent
(4) This is a version of \cite[Lemma 5.9]{LMM24}, which uses Lemmas~\ref{lem:Cn}(2,3)
and part (2) above. While Lemma \ref{lem:Cn}(3) is different here, it still gives
that $\zeta'\to\infty$ as $n\to\infty$ and $|\zeta''|_\infty/|\zeta'|\ll n^2$
on $\mfD_n$,
which are the necessary ingredients to prove the distortion bounds.

\medskip
\noindent
(5) This is similar to \cite[Lemma 5.10]{LMM24} but using the different regularity property of the function $a$ in part (2) instead of H\"older continuity in~\cite[Lemma 5.7(1)]{LMM24}. 
\end{proof}

We now have all the ingredients to prove that $f$ satisfies the Chernov axioms
(A1)--(A8) listed in Appendix~\ref{app:C}.

\begin{thm}\label{thm:Chernov}
The first return map $f$ satisfies the Chernov axioms (A1)--(A8).
\end{thm}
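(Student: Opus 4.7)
The plan is to verify the Chernov axioms in turn, appealing systematically to the results collected in Lemma~\ref{lem:hyperbolicity} and to the description of the singular sets $\mfS^\pm\subset\Sigma_0$. The structure of the argument mirrors that of \cite[Theorem~5.12]{LMM24} for the single flat-geodesic setting, with modifications where the weaker regularity of the Green bundles from Theorem~\ref{thm:GW2} must be used in place of Hölder continuity.

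I would begin with the axioms concerning smoothness and the singular structure of $f$. The map $f=g_\Upsilon$ is $C^2$ on $\Sigma_0\setminus\mfS^+$ since it is a first-hit map of the $C^3$ flow $g_t$. By construction, $\mfS^+$ decomposes as a finite union of smooth curves (the primary singular set, arising from $\{\Upsilon=\infty\}$ and from pre-images of $\partial\Sigma_0$ under $g_\Upsilon$) together with the countable collection $\{\partial\mfD_n\}_{n\geq n_0}$, whose only accumulation locus is contained in $\{\Upsilon=\infty\}$; the description of $\mfS^-$ is analogous. Alignment of $\mfS^\pm$ with the stable/unstable cones, and the cone-invariance conditions, are inherited from the hyperbolic structure of the flow together with Lemma~\ref{lem:hyperbolicity}(2)(a), which describes the slope of $E^u_x$ in Clairaut coordinates and shows it is transverse to the singular curves. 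For the hyperbolicity and local-manifold regularity axioms, existence almost everywhere of LSM/LUM is Lemma~\ref{lem:hyperbolicity}(1); the uniform expansion $\|df|_{E^u}\|\gg 1$ with growth of order $n^{3-2/r}$ on $\mfD_n^>$ and $n^3$ on $\mfD_n^<$ is Lemma~\ref{lem:hyperbolicity}(3); the $C^{1+\Lip}$ regularity of local unstable manifolds comes from Lemma~\ref{lem:hyperbolicity}(2)(b) via Theorem~\ref{thm:GW1}; and the Hölder distortion estimate for $\log\|df|_{E^u}\|$ along unstable curves is Lemma~\ref{lem:hyperbolicity}(4).

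The first point where the new regularity of Section~\ref{sec:GW} plays an essential role is the axiom on absolute continuity of the stable holonomy. The required quantitative Jacobian bound is furnished by Lemma~\ref{lem:hyperbolicity}(5); this estimate has only a logarithmic modulus of continuity, reflecting the logarithmic modulus of the Green bundles in Theorem~\ref{thm:GW2}, but the Chernov formulation in Appendix~\ref{app:C} asks only for a modulus of continuity tending to zero with $d(x,\bar x)$, so the loss of Hölder continuity in the present setting is not an obstruction.

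The main obstacle is the one-step expansion / finite-complexity axiom. One must show that the sum $\sum_\alpha\Lambda_\alpha^{-1}<1$, where $\alpha$ ranges over the connected components of the restriction of $\Sigma_0\setminus\mfS^+$ to a sufficiently short unstable curve and $\Lambda_\alpha$ is the minimum expansion of $f$ on $\alpha$. Away from the homogeneity bands, $f$ is uniformly expanding with a bound independent of $n_0$, contributing a fixed constant. By Lemma~\ref{lem:hyperbolicity}(3), the contribution of the bands is bounded by a constant multiple of $\sum_{n\geq n_0}n^{-(3-2/r)}$, which converges since $r\geq 5$ gives exponent strictly above $1$. Choosing $n_0$ sufficiently large then drives the total sum below $1$, verifying the axiom and completing the plan.
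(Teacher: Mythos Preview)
Your plan follows the same overall route as the paper --- defer to the verification in \cite[Section~6]{LMM24} and isolate where the weakened regularity from Section~\ref{sec:GW} intervenes. The treatment of (A1)--(A6) and (A8) is fine in outline.

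The gap is in (A7). You write that ``the Chernov formulation in Appendix~\ref{app:C} asks only for a modulus of continuity tending to zero with $d(x,\bar x)$,'' but this misstates the axiom: (A7) demands a \emph{uniform bound} on the Jacobian of the stable holonomy $H$, not a modulus. Lemma~\ref{lem:hyperbolicity}(5) is only a single-step estimate on $|\log\|df|_{E^u_x}\|-\log\|df|_{E^u_{\bar x}}\||$; it does not by itself bound $JH$. The verification of (A7) writes
\[
|\log JH(x)|\le\sum_{i=0}^\infty\big|\log\Lambda(f^{-i}x)-\log\Lambda(f^{-i}H(x))\big|
\]
and controls each summand via Lemma~\ref{lem:hyperbolicity}(5). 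In the H\"older setting of \cite{LMM24} the $i$th term decays geometrically; here, with only logarithmic modulus $|\log d|^{-\omega}$ and $d(f^{-i}x,f^{-i}H(x))\le\Lambda^{-i}d(x,H(x))$, the $i$th term is of order $(i\log\Lambda+|\log d(x,H(x))|)^{-\omega}$, and one needs $\omega>1$ for the series to converge. This is exactly the computation the paper isolates as the ``main difference,'' and it is the reason $\omega$ in Theorem~\ref{thm:GW2} is taken in $(1,r/2)$. A modulus merely tending to zero would not suffice.
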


\begin{proof} 
The verifications are essentially identical to those
in \cite[Section~6]{LMM24}.
The main difference is that
the last line of 
the argument for~(A7) becomes
\begin{align*}
|\log JH(x)| & \ll \sum_{i=0}^\infty |\log d(f^{-i}x,f^{-i}H(x))|^{-\omega}
\le \sum_{i=0}^\infty |\log\{\Lambda^{-i}d(x,H(x))\}|^{-\omega}
\\ & 
= \sum_{i=0}^\infty (i\log\Lambda+\log d(x,H(x))^{-1})^{-\omega}
\ll  1+\sum_{i=1}^\infty i^{-\omega}<\infty.
\end{align*}

\vspace{-5ex}
\end{proof}

\section{Nonstandard CLT for exponential Young towers}
\label{sec:CLT}

As shown in~\cite{LMMsub}, the key step in proving the nonstandard weak invariance principle in Theorem~\ref{thm:WIP} is to establish a nonstandard CLT for certain piecewise constant observables (called $JR_0$ below) related to a discrete first return time function.
In this section we recall, and extend slightly, an abstract argument due to~\cite{BalintChernovDolgopyat11} for proving such a nonstandard CLT.

Let $(Z,\mu_Z)$ be a probability space with an at most countable measurable partition $\{Z_j:\,j\ge1\}$ and let
$F:Z\to Z$ be an ergodic measure-preserving map.
Define the separation time $s(z,z')$ to be the least integer $n\ge0$ such that $F^n z$ and $F^nz'$ lie in distinct partition elements.
We assume that $s(z,z')=\infty$ if and only if $z=z'$; then $d_\theta(z,z')=\theta^{s(z,z')}$ is a metric for $\theta\in(0,1)$.

We say that
$F:Z\to Z$ is a \emph{(full-branch) Gibbs-Markov map} if
(i) $F:Z_j\to Z$ is a measure-theoretic bijection for each $j\ge1$;
(ii) there exists $\theta\in(0,1)$ such that
$\log\xi$ is $d_\theta$-Lipschitz, where
$\xi = d\mu_Z/(d\mu_Z\circ F)$.

Let $\tau:Z\to\Z^+$ be piecewise constant (constant on each $Z_j$) with $\mu_Z(\tau>n)=O(e^{-cn})$ for some $c>0$.
We define the 
\emph{one-sided exponential Young tower}
$\Delta=\{(z,\ell):z\in Z,\,0\le \ell<\tau(z)\}$ and tower map
\[
f_\Delta:\Delta\to \Delta, \qquad
f_\Delta(z,\ell)=\begin{cases} (z,\ell+1), & 0\le \ell<\tau(z)-1 \\
(Fz,0), & \ell=\tau(z)-1.
\end{cases}
\]
An ergodic $f_\Delta$-invariant probability measure is given by
$\mu_\Delta=(\mu_Z\times{\rm counting})/\bar\tau$
where $\bar\tau=\int_Z\tau\,d\mu_Z$.

An observable $V:\Delta\to\R$ is called \emph{piecewise constant} if $V$ is constant on each partition element $Z_j\times\{\ell\}$ where $Z_j$ is a partition element for the Gibbs-Markov map $F:Z\to Z$ and $0\le\ell<\tau|_{Z_j}$.

It is assumed that there is a distinguished integrable piecewise constant observable 
$R_0:\Delta\to\Z^+$. We are interested in piecewise constant observables $JR_0$ where
$J:\Delta{\to\R}$ is bounded and piecewise constant.

\begin{thm} \label{thm:JR}
Let $\{\alpha_i:i\in I\}\subset\R$ be a list of the values attained by $J$.
Let $\sigma_i\ge0$ be constants, $i\in I$, such that 
$\sum_{i\in I}\sigma_i^2\in(0,\infty)$.
Assume that there are constants $C>0$, $\eps\in(0,1)$ such that
\begin{equation} \label{eq:JR1}
 \sum_{i\in I}|\mu_\Delta(R_0=n,\,J=\alpha_i)-2\sigma_i^2 n^{-3}|\le Cn^{-(3+\eps)}
\quad\text{for all $n\ge1$},
\end{equation}
and
\begin{equation} \label{eq:JR2}
\mu_\Delta(R_0=k,\,R_0\circ f_\Delta^n=\ell)\le Ck^{-(2+\eps)}\ell^{-(2+\eps)}
\quad\text{for all $k,\ell,n\ge1$}.
\end{equation}
Then $JR_0$ satisfies a nonstandard CLT with variance 
$\sigma_J^2=\sum_{i\in I}\alpha_i^2\sigma_i^2\ge0$. That is,
$(n\log n)^{-1/2}\sum_{j=0}^{n-1}(JR_0)\circ f_\Delta^j\to_d N(0,\sigma_J^2)$.
\end{thm}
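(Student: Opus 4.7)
The strategy is to follow the characteristic-function approach of~\cite{BalintChernovDolgopyat11}, using the moment asymptotics supplied by~\eqref{eq:JR1} to identify the limiting variance. The origin of the logarithmic correction in the normalisation is transparent from~\eqref{eq:JR1}: summing $k^2\mu_\Delta(R_0=k,J=\alpha_i)=2\sigma_i^2 k^{-1}+O(k^{-1-\eps})$ over $1\le k\le N$ and $i\in I$ gives
\[
E\bigl[(JR_0)^2\mathbb{1}_{R_0\le N}\bigr]=2\sigma_J^2\log N+O(1),
\]
so $JR_0$ is just barely non-square-integrable and one expects the slowly varying normalisation $\sqrt{n\log n}$.

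The natural plan is a truncation argument. Fix a truncation level $N=N(n)=n^{1/2}$ and split $JR_0=V+\widetilde V$ with $V=V_N:=JR_0\mathbb{1}_{R_0\le N}$. The observable $V_N$ is bounded (by $N\|J\|_\infty$) and piecewise constant on $\Delta$, so the Gibbs-Markov structure of $F$ combined with the exponential tails of $\tau$ puts all the standard Young-tower spectral and martingale-approximation tools at our disposal. The usual CLT for $\sum_{j<n} V_N\circ f_\Delta^j$ then has normalising constant $\sqrt{n\sigma_N^2}$, where $\sigma_N^2$ is the Green--Kubo asymptotic variance; using~\eqref{eq:JR2} to bound pair correlations between distant large values of $R_0$, one checks that $\sigma_N^2=E[V_N^2]+o(E[V_N^2])\sim\sigma_J^2\log n$, so
\[
(n\log n)^{-1/2}\textstyle\sum_{j=0}^{n-1}V_N\circ f_\Delta^j\to_d N(0,\sigma_J^2).
\]

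For the tail $\widetilde V_N=JR_0\mathbb{1}_{R_0>N}$, an $L^2$ bound is not available since $(JR_0)^2\notin L^1$. Instead, I would exploit~\eqref{eq:JR2} directly: it gives $\mu_\Delta(R_0\circ f_\Delta^j>N,\,R_0\circ f_\Delta^{j'}>N)\ll N^{-2(2+\eps)}$ for $j\ne j'$, so by union bound over $j,j'<n$ the probability that two distinct indices both experience a large excursion is $O(n^2 N^{-2(2+\eps)})=o(1)$ for $N=n^{1/2}$. On the complementary event of probability $1-o(1)$ there is at most one $j<n$ with $R_0\circ f_\Delta^j>N$, and the maximum size of such a term is controlled by a further truncation at $N'=N'(n)$ chosen so that $\mu_\Delta(R_0>N')=o(1/n)$ and $N'=o(\sqrt{n\log n})$. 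A Slutsky argument then combines the truncated CLT with the negligibility of $\widetilde V_N$ to yield the claim.

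The main technical obstacle is uniformity in the resulting triangular array: since $V_N$ varies with $n$, the decorrelation estimates driving the CLT for $V_N$ must hold with constants independent of $N$. A naive use of exponential decay of correlations for $F$ produces a rate $\rho^k$ with prefactor $\|V_N\|_\infty^2\sim N^2$, which is far too crude. The decorrelation must be quantified more carefully by splitting $V_N=V^{\rm sm}+V^{\rm lg}$, where $V^{\rm sm}=JR_0\mathbb{1}_{R_0\le M}$ for some fixed $M$: the small--small correlations decay exponentially by the spectral gap of $F$ on $d_\theta$-Lipschitz functions, the large--large correlations are summable by~\eqref{eq:JR2}, and the mixed terms are controlled by Cauchy--Schwarz combined with the tail bound on $R_0$. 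Implementing this uniform control — equivalently, obtaining a uniform expansion of the leading eigenvalue of the twisted transfer operator of the form $\lambda(t)=1-\sigma_J^2 t^2\log(1/|t|)(1+o(1))$ as $t\to0$ — is the core step that upgrades the formal moment calculation to a bona fide nonstandard CLT.
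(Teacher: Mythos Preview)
Your outline has the right shape and correctly isolates the moment asymptotic (this is exactly the paper's Proposition~\ref{prop:JR}), but the two places where you wave your hands are precisely the places where the argument in~\cite{BalintChernovDolgopyat11} and the paper do real work, and your proposed shortcuts both fail.

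\textbf{The tail argument.} Your bound $\mu_\Delta(R_0\circ f_\Delta^j>N,\,R_0\circ f_\Delta^{j'}>N)\ll N^{-2(2+\eps)}$ is off by one in the exponent: summing~\eqref{eq:JR2} over $k,\ell>N$ gives $N^{-2(1+\eps)}$, not $N^{-2(2+\eps)}$. With the correct exponent and $N=n^{1/2}$, the union bound over pairs is $\approx n^2\cdot n^{-(1+\eps)}=n^{1-\eps}\to\infty$, so your ``at most one large excursion'' step collapses. The paper avoids this entirely by truncating at the slightly larger level $q=n^{1/2}\log\log n$ and using only the \emph{first}-moment bound $n\cdot\mu_\Delta(R_0>q)\approx (\log\log n)^{-2}\to0$, which does not need~\eqref{eq:JR2} at all.

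\textbf{The uniform decorrelation.} Your proposal to split $V_N=V^{\rm sm}+V^{\rm lg}$ at a \emph{fixed} level $M$ cannot produce summable correlations: the large--large term bounded via~\eqref{eq:JR2} gives $\sum_{k,\ell>M}k\ell\cdot k^{-(2+\eps)}\ell^{-(2+\eps)}\ll M^{-2\eps}$, a constant independent of the time-gap $m$, so summing over $m$ diverges and the Green--Kubo series is uncontrolled. The paper's Lemma~\ref{lem:A} obtains \emph{uniform exponential} decay $|\int A_{d,e}\,(A_{d',e'}\circ f_\Delta^m)\,d\mu_\Delta|\le C\gamma^m$ by truncating at a level $N=N(m)\approx e^{cm/(2+\eps)}$ that \emph{grows with the gap}: the bounded part then contributes $e^{-cm}N^2$ (exponential decorrelation for piecewise constant observables) and the tail contributes $N^{-\eps}$ via~\eqref{eq:JR2}, both of which decay geometrically in $m$. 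This lemma is the engine that lets the paper introduce a \emph{second} truncation level $p=n^{1/2}(\log n)^{-11}$, show that the centred medium-range piece $A_{p,q}$ contributes negligibly, and reduce to $\hat A=JR_01_{\{R_0\le q\}}-A_{p,q}$, which is essentially bounded by $p=o(n^{1/2})$ so that the Bernstein/moment method of~\cite[Section~4]{BalintChernovDolgopyat11} (driven by exponential multiple decorrelations on the tower) goes through. Your single truncation at $n^{1/2}$ leaves an observable that is too large for that final step.
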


In the remainder of this section, we prove Theorem~\ref{thm:JR}
following~\cite{BalintChernovDolgopyat11}.
Let $\sigma_{R_0}^2=\sum_{i\in I}\sigma_i^2\in(0,\infty)$.

\begin{prop} \label{prop:R0}
$\mu_\Delta(R_0=n) =2\sigma_{R_0}^2 n^{-3} + O(n^{-(3+\eps)})$.
\end{prop}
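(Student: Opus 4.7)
The proof proposal is very short, because Proposition \ref{prop:R0} is essentially a direct consequence of the hypothesis \eqref{eq:JR1} after summing over $i \in I$.

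First, I would observe that the events $\{J = \alpha_i\}$, $i \in I$, form a measurable partition of $\Delta$ (up to a null set), since by assumption $\{\alpha_i : i \in I\}$ enumerates the values attained by the piecewise constant function $J$. Consequently
\[
\mu_\Delta(R_0 = n) = \sum_{i \in I} \mu_\Delta(R_0 = n,\, J = \alpha_i),
\]
the series converging absolutely because all terms are nonnegative with total sum at most $1$. In the same way, since $\sum_{i \in I} \sigma_i^2 = \sigma_{R_0}^2 \in (0, \infty)$ by the standing assumption of Theorem~\ref{thm:JR}, the series $\sum_{i \in I} 2 \sigma_i^2 n^{-3} = 2 \sigma_{R_0}^2 n^{-3}$ is also absolutely convergent.

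Next, I would apply the triangle inequality together with the hypothesis \eqref{eq:JR1}:
\[
\bigl| \mu_\Delta(R_0 = n) - 2 \sigma_{R_0}^2 n^{-3} \bigr|
= \Bigl| \sum_{i \in I} \bigl(\mu_\Delta(R_0 = n,\, J = \alpha_i) - 2 \sigma_i^2 n^{-3}\bigr) \Bigr|
\le \sum_{i \in I} \bigl| \mu_\Delta(R_0 = n,\, J = \alpha_i) - 2 \sigma_i^2 n^{-3} \bigr|
\le C n^{-(3+\eps)},
\]
which is precisely the claim. No step is delicate here: the only conceptual points are (i) that the partition $\{J = \alpha_i\}$ exhausts $\Delta$, so that summing the joint probabilities recovers the marginal $\mu_\Delta(R_0 = n)$, and (ii) that the hypothesis is stated in the strong $\ell^1$ form $\sum_i |\cdot| \le C n^{-(3+\eps)}$ rather than just for each $i$ separately, which is exactly what is needed to pass the error bound through the summation. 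There is no real obstacle; the proposition is just the $\ell^1 \to \ell^\infty$ projection of \eqref{eq:JR1}.
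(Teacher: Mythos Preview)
Your proof is correct and takes exactly the same approach as the paper, which simply says the result is immediate from condition~\eqref{eq:JR1}. You have just written out the details of that immediacy: partition by the values of $J$, sum, and apply the $\ell^1$ bound via the triangle inequality.
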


\begin{proof}
This is immediate from condition~\eqref{eq:JR1}.
\end{proof}

\begin{prop} \label{prop:JR}
$\int_{R_0\le p}(JR_0)^2\,d\mu_\Delta=2\sigma_J^2\log p +O(1)$ for $p\ge1$.
\end{prop}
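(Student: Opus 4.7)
The plan is to reduce the integral to a single-variable sum over $n$ weighted by the partitioned measures $\mu_\Delta(R_0=n,\,J=\alpha_i)$, then to apply the asymptotic~\eqref{eq:JR1} term-by-term. Since $J$ is piecewise constant taking the countable (or finite) set of values $\{\alpha_i:i\in I\}$ and $R_0$ is piecewise constant with integer values, I would first write
\[
\int_{R_0\le p}(JR_0)^2\,d\mu_\Delta
=\sum_{i\in I}\alpha_i^2\sum_{n=1}^{p}n^2\,\mu_\Delta(R_0=n,\,J=\alpha_i).
\]
For each $i$ and $n$, set $r_{i,n}=\mu_\Delta(R_0=n,\,J=\alpha_i)-2\sigma_i^2 n^{-3}$, so by~\eqref{eq:JR1} we have $\sum_{i\in I}|r_{i,n}|\le Cn^{-(3+\eps)}$.

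Plugging this in, the main term becomes
\[
\sum_{i\in I}\alpha_i^2\sum_{n=1}^{p}2\sigma_i^2 n^{-1}
=2\sigma_J^2\sum_{n=1}^{p}n^{-1}
=2\sigma_J^2\log p+O(1),
\]
using $\sum_{n=1}^p n^{-1}=\log p+O(1)$ and the definition $\sigma_J^2=\sum_{i\in I}\alpha_i^2\sigma_i^2$ (which is finite since $|\alpha_i|\le\|J\|_\infty$ and $\sum_i\sigma_i^2<\infty$, so in particular $\sigma_J^2\le\|J\|_\infty^2\sigma_{R_0}^2$). For the remainder term, boundedness of $J$ lets me exchange the order of summation and estimate
\[
\Bigl|\sum_{i\in I}\alpha_i^2\sum_{n=1}^{p}n^2 r_{i,n}\Bigr|
\le\|J\|_\infty^2\sum_{n=1}^{p}n^2\sum_{i\in I}|r_{i,n}|
\le C\|J\|_\infty^2\sum_{n=1}^{\infty}n^{-(1+\eps)}<\infty,
\]
which is $O(1)$ uniformly in $p$. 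Combining the two contributions yields the claim.

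There is no real obstacle: the statement is essentially a bookkeeping consequence of~\eqref{eq:JR1} combined with the harmonic-sum asymptotic. The only subtlety is to justify interchanging the sum over $i$ with the sum over $n$ on the error side, which is handled by the uniform bound on $\sum_i|r_{i,n}|$ provided by~\eqref{eq:JR1}, and to note that the sum $\sum_i\alpha_i^2\sigma_i^2$ defining $\sigma_J^2$ is finite thanks to boundedness of $J$. Note that~\eqref{eq:JR2} is not needed in this proposition; it enters only later when estimating cross-correlations for the CLT itself.
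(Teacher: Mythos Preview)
Your proof is correct and follows essentially the same approach as the paper: decompose the integral as a double sum over $n$ and $i$, apply~\eqref{eq:JR1} to split off the main term $2\sigma_J^2\sum_{n\le p}n^{-1}$, and bound the remainder using $\sum_i|r_{i,n}|\le Cn^{-(3+\eps)}$ and boundedness of $J$. Your version is simply more explicit about the error control and the justification for interchanging sums, and you are right that~\eqref{eq:JR2} plays no role here.
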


\begin{proof}
By condition~\eqref{eq:JR1},
\begin{align*}
\int_{R_0\le p}(JR_0)^2\,d\mu_\Delta
& =\sum_{m=1}^p\sum_{i\in I} \alpha_i^2m^2\mu_\Delta(R_0=m,\,J=\alpha_i)
\\ & =\sum_{m=1}^p m^2\left(\sum_{i\in I}2\alpha_i^2\sigma_i^2m^{-3}+O(m^{-(3+\eps)})\right)
=2\sigma_J^2\log p+O(1)
\end{align*}
as required.
\end{proof}

For $1\le d\le e\le\infty$, define
\[
A_{d,e}=\left(JR_0-\frac{1}{\mu_\Delta(d\le R_0<e)}\int_{d\le R_0<e}JR_0\,d\mu_\Delta\right)1_{\{d\le R_0<e\}}.
\]

\begin{lemma} \label{lem:A}
There are constants $C>0$ and $\gamma\in(0,1)$,
independent of $n$, $d$, $e$, such that
\[
\left|\int_{\Delta} A_{d,e}\,(A_{d',e'}\circ f_\Delta^n)\,d\mu_\Delta\right|\le C\gamma^n
\]
for all $n\ge1$, $1\le d\le e\le \infty$, 
$1\le d'\le e'\le \infty$.
\end{lemma}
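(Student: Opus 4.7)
The plan is to exploit that $A_{d,e}$ and $A_{d',e'}$ are piecewise constant on the Young tower partition and mean-zero with respect to $\mu_\Delta$. The natural first attempt—invoking the standard exponential decay of correlations for Young towers (Young~\cite{Young98}) applied directly to these piecewise constant observables—only yields bounds proportional to $\|A_{d,e}\|_\infty \|A_{d',e'}\|_\infty$, and these $L^\infty$ norms can be as large as $\|J\|_\infty e$. So the main obstacle is obtaining uniformity in $d, e, d', e'$ (including the case $e=\infty$); we handle this by truncating and combining the Young tower spectral gap with the heavy-tail input~\eqref{eq:JR2}.

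First, I would fix a truncation level $K\ge1$ and split $A_{d,e} = A^{(K)}_{d,e} + B^{(K)}_{d,e}$, where $A^{(K)}_{d,e}$ is the mean-zero piecewise constant observable obtained by restricting the support of $A_{d,e}$ to $\{d\le R_0 < \min(e,K)\}$ and recentring; the remainder $B^{(K)}_{d,e}$ is also mean-zero and absorbs both the tail $\{R_0\ge K\}$ and the shift between the two centring constants. Then $\|A^{(K)}_{d,e}\|_\infty \le C_0 K$ while, using Proposition~\ref{prop:R0}, $\|B^{(K)}_{d,e}\|_{L^1}\ll K^{-1}$. Expanding
\[
\int_\Delta A_{d,e}\,(A_{d',e'}\circ f_\Delta^n)\,d\mu_\Delta
=\int A^{(K)}_{d,e}(A^{(K)}_{d',e'}\circ f_\Delta^n)
+\text{two cross terms}
+\int B^{(K)}_{d,e}(B^{(K)}_{d',e'}\circ f_\Delta^n),
\]
I apply the standard Young tower exponential decay of correlations to the first integral (piecewise constant observables are trivially Lipschitz in $d_\theta$), obtaining $\gamma_1\in(0,1)$ and $C_1>0$, independent of $d,e,d',e'$, with
\[
\Bigl|\int A^{(K)}_{d,e}(A^{(K)}_{d',e'}\circ f_\Delta^n)\,d\mu_\Delta\Bigr| \le C_1K^2\gamma_1^n.
\]

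For the remaining three terms, I would use the joint-tail hypothesis~\eqref{eq:JR2}. Using $|B^{(K)}_{d,e}|\ll R_0\mathbf{1}_{\{R_0\ge K\}} + K^{-1}\mathbf{1}_{\{d\le R_0<K\}}$ and the analogous bound for $B^{(K)}_{d',e'}\circ f_\Delta^n$, the tail-tail term is controlled by
\[
\sum_{k,\ell\ge K}k\ell\,\mu_\Delta(R_0=k,\,R_0\circ f_\Delta^n=\ell)
\ll\Bigl(\sum_{k\ge K}k^{-(1+\eps)}\Bigr)^{\!2}\ll K^{-2\eps},
\]
plus manifestly smaller contributions from the $K^{-1}$ pieces. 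The cross terms mix a bounded factor ($\le C_0K$) with a heavy-tail factor and are bounded similarly via~\eqref{eq:JR2} by $\ll K^{-\eps}$. Finally I optimise, choosing $K=\gamma_1^{-n/(2+\eps)}$ so that $K^2\gamma_1^n$ and $K^{-\eps}$ are both of order $\gamma^n$ with $\gamma:=\gamma_1^{\eps/(2+\eps)}\in(0,1)$.

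The hardest part is the bookkeeping in Step 1: the centring constant $c_{d,e}$ is not uniformly bounded in $d,e$ (it can grow like $d$), so some care is needed when defining the truncation so that $B^{(K)}_{d,e}$ inherits the required $L^1$ control rather than a bound involving $c_{d,e}$. The clean way is to let the truncated observable $A^{(K)}_{d,e}$ have its own centring constant $c^{(K)}_{d,e}$ (which is bounded by $\|J\|_\infty K$) and push the difference $c^{(K)}_{d,e}-c_{d,e}$ into $B^{(K)}_{d,e}$; a short calculation shows this difference is $O(K^{-1})$ uniformly in $d,e$, so the bounds above go through with constants depending only on $\|J\|_\infty$, $\sigma_{R_0}$, and the parameter $\eps$ in~\eqref{eq:JR1}--\eqref{eq:JR2}.
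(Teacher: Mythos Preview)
Your overall strategy—truncate at level $K$, apply the Young-tower exponential decay to the bounded truncation, control the remaining pieces via~\eqref{eq:JR2}, then optimise $K\approx\gamma_1^{-n/(2+\eps)}$—is exactly the paper's. The gap lies in what you identify as the ``hardest part'', namely the treatment of the centring constants.

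The assertion that $c^{(K)}_{d,e}-c_{d,e}=O(K^{-1})$ uniformly in $d,e$ is false. Take $J\equiv1$, $d=\lfloor K/2\rfloor$, $e=\infty$. Using $\mu_\Delta(R_0=m)\sim 2\sigma_{R_0}^2 m^{-3}$ from Proposition~\ref{prop:R0}, one computes $c_{d,e}\sim K$ while $c^{(K)}_{d,e}=c_{d,K}\sim \tfrac{2}{3}K$, so the difference is of order $K$, not $K^{-1}$. Consequently your pointwise bound $|B^{(K)}_{d,e}|\ll R_0\,1_{\{R_0\ge K\}}+K^{-1}1_{\{d\le R_0<K\}}$ fails on $\{d\le R_0<K\}$. (The $L^1$ bound $\|B^{(K)}_{d,e}\|_{L^1}\ll K^{-1}$ does survive, but that alone only gives $O(1)$ for the cross terms.)

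There is a second issue: even granting your bounds, the cross-term estimate does not give $K^{-\eps}$ if you only use $|A^{(K)}_{d,e}|\le C_0K$. Indeed, via~\eqref{eq:JR2},
\[
\int |A^{(K)}_{d,e}|\,\big(R_0\,1_{\{R_0\ge K\}}\big)\circ f_\Delta^n
\;\le\;C_0K\sum_{k<K}\sum_{\ell\ge K}\ell\,\mu_\Delta(R_0=k,R_0\circ f_\Delta^n=\ell)
\;\ll\;K\cdot K^{-\eps}=K^{1-\eps},
\]
which grows with $K$ since $\eps\in(0,1)$; no choice of $K$ then balances this against $K^2\gamma_1^n$.

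What is actually needed, and what the paper proves as its key estimate~\eqref{eq:Abound}, is the pointwise-in-$k$ bound $|A_{d,e}1_{\{R_0=k\}}|\ll k$ (equivalently $c_{d,e}\ll d$ uniformly). This is \emph{not} a short calculation: it requires a case split using Proposition~\ref{prop:R0} (the cases $e\le 2d$, $e>2d$ with $d$ large, $d$ small, etc.). Once you have it, note that $A^{(K)}_{d,e}=A_{d,\min(e,K)}$, so both the truncated and the tail pieces satisfy $|\,\cdot\,1_{\{R_0=k\}}|\ll k$, and the cross and tail--tail terms are then bounded by $\sum_{k}k^{-(1+\eps)}\sum_{\ell\ge K}\ell^{-(1+\eps)}\ll K^{-\eps}$, after which your optimisation goes through. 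So the fix is to replace the ``$O(K^{-1})$'' centring-difference claim by the estimate $c_{d,e}\ll d$.
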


\begin{proof}
We claim that 
\begin{equation} \label{eq:Abound}
|A_{d,e}1_{\{R_0=k\}}|\ll k.
\end{equation}
We begin by verifying this claim.

For the first term in $A_{d,e}1_{\{R_0=k\}}$, we have
$|JR_0 1_{\{R_0=k\}}|\le |J|_\infty\,k$.
The remaining term is bounded by $|J|_\infty B$, where 
\[
B=1_{\{R_0=k,\,d\le R_0<e\}}\,\mu_\Delta(d\le R_0<e)^{-1}\int_{d\le R_0<e}R_0\,d\mu_\Delta. 
\]
Note that if $B\neq0$, then $d\le k<e$, so we can suppose throughout that $d\le k<e$.
Also, $B\in[d,e]$ so for $e\le 2d$ we obtain $B\le e\le 2d\le 2k$.
Hence we can suppose that $e>2d$.

By Proposition~\ref{prop:R0}, 
\[
\int_{d\le R_0<e}R_0\,d\mu_\Delta\le  \sum_{m\ge d}m\mu_\Delta(R_0=m)\ll d^{-1}.
\]
Also, there exists $m_0\ge1$ such that $\mu_\Delta(R_0=m)>\sigma_{R_0}^2 m^{-3}$ for all $m\ge m_0$.
Hence, for $d\ge m_0$, we have
$\mu_\Delta(d\le R_0<e)\gg d^{-2}-e^{-2}\ge \frac{3}{4}d^{-2}$, and we 
obtain $B\ll d\le k$ as required.

On the other hand, if $d\le m_0<e$, then $\mu_\Delta(d\le R_0<e)\ge \mu_\Delta(R_0= m_0)>0$ and we obtain $B\ll d^{-1}\le 1\le  k$.
This leaves the case where $e\le m_0$ for which we have the trivial bound $B\le m_0\le m_0k$.  This completes the proof of estimate~\eqref{eq:Abound}.

Now set $B_1=A_{d,e}$, $B_2=A_{d',e'}$.
By~\eqref{eq:Abound},
\begin{equation} \label{eq:Bbound}
|B_i 1_{\{R_0\le N\}}|\ll N.
\end{equation}

Since $\int B_i\,d\mu_\Delta=0$,
\[
\int_{\Delta}B_1 1_{\{R_0\le N\}}\, (B_2 1_{\{R_0\le N\}})\circ f_\Delta^n\,d\mu_\Delta
=I_1+I_2
\]
where
\begin{align*}
I_1 & =\int_{\Delta}\big(B_1 1_{\{R_0\le N\}}-{\SMALL\int} B_1 1_{\{R_0\le N\}}\,d\mu_\Delta\big)\, \big(B_2 1_{\{R_0\le N\}}-{\SMALL\int} B_2 1_{\{R_0\le N\}}\,d\mu_\Delta\big)\circ f_\Delta^n\,d\mu_\Delta,
\\ I_2 & =\int_{\Delta}B_1 1_{\{R_0> N\}}\,d\mu_\Delta
\int_{\Delta}B_2 1_{\{R_0> N\}}\,d\mu_\Delta.
\end{align*}
The functions $B_i 1_{\{R_0\le N\}}$ are piecewise constant.
Hence it follows from~\eqref{eq:Bbound} together with exponential decay of correlations for dynamically H\"older observables that there exists $c>0$ such that
\[
|I_1|\ll e^{-cn}|B_1 1_{\{R_0\le N\}}|_\infty
|B_2 1_{\{R_0\le N\}}|_\infty \ll e^{-cn}N^2.
\]
Also, by~\eqref{eq:Bbound} and Proposition~\ref{prop:R0},
\[
\left|\int B_i 1_{\{R_0>N\}}\,d\mu_\Delta\right|
\le \sum_{k>N}\int |B_i| 1_{\{R_0=k\}}\,d\mu_\Delta
\ll \sum_{k>N} k\mu_\Delta(R_0=k)\ll N^{-1}.
\]
Hence, $|I_2|\ll N^{-2}$ and
\begin{equation} \label{eq:B1}
\left|\int_{\Delta}B_1 1_{\{R_0\le N\}}\, (B_2 1_{\{R_0\le N\}})\circ f_\Delta^n\,d\mu_\Delta\right|\ll e^{-cn}N^2 + N^{-2}.
\end{equation} 

Next, by condition~\eqref{eq:JR2} and estimate~\eqref{eq:Bbound},
\begin{align*}
\left|\int_{\Delta}B_1 1_{\{R_0\le N\}}\, (B_2 1_{\{R_0>N\}}) \circ f_\Delta^n\,d\mu_\Delta\right| & \le
\sum_{k=1}^N \sum_{\ell=N+1}^\infty 
\left|\int_{\Delta}B_1 1_{\{R_0=k\}}\, (B_2 1_{\{R_0=\ell\}}) \circ f_\Delta^n\,d\mu_\Delta\right| 
\\ & \ll
\sum_{k=1}^N \sum_{\ell=N+1}^\infty k\ell \mu_\Delta(R_0=k,\,R_0\circ f_\Delta^n=\ell)
\\ & \ll 
\sum_{k=1}^N \sum_{\ell=N+1}^\infty k^{-(1+\eps)}\ell^{-(1+\eps)} 
\ll N^{-\eps}.
\end{align*}
Similarly,
\begin{align*}
\left|\int_{\Delta}B_1 1_{\{R_0>N\}}\, B_2 \circ f_\Delta^n\,d\mu_\Delta\right| & \ll
\sum_{k=N+1}^\infty\sum_{\ell=1}^\infty k\ell \mu_\Delta(R_0=k,\,R_0\circ f_\Delta^n=\ell)
\ll N^{-\eps}.
\end{align*}
Combining these with~\eqref{eq:B1} and taking
$N=[e^{cn/(2+\eps)}]$, we obtain that
\[
\left|\int_{\Delta}B_1 \, (B_2 \circ f_\Delta^n)\,d\mu_\Delta\right|
\ll e^{-cn}N^2+N^{-\eps} \ll e^{-\eps cn/(2+\eps)},
\]
as required.
\end{proof}

\begin{pfof}{Theorem~\ref{thm:JR}}
Following~\cite[Section~3]{BalintChernovDolgopyat11}, we take $p=n^{1/2}(\log n)^{-11}$ and
$q=n^{1/2}\log\log n$.
Using Proposition~\ref{prop:R0}, the nonstandard CLT for $JR_0$ 
reduces to the nonstandard CLT for $JR_01_{\{R_0\le q\}}$.
Then, using Lemma~\ref{lem:A}, it is shown in~\cite[Section~3]{BalintChernovDolgopyat11} how to reduce further from $JR_01_{\{R_0\le q\}}$ to $\hat A=JR_01_{\{R_0\le q\}}-A_{p,q}$.

Finally, the nonstandard CLT for $\hat A$ follows by the argument in~\cite[Section~4]{BalintChernovDolgopyat11}.
The ingredients are:
\begin{enumerate}[$\bullet$]
\item An exponential multiple decorrelations bound for dynamically H\"older observables (\cite[Lemma~4.1]{BalintChernovDolgopyat11}) which is automatically satisfied for systems modelled by Young towers with exponential tails~\cite[Theorem~7.4.1]{ChernovMarkarian};
\item A locally H\"older estimate on $JR_0$ (\cite[Lemma~4.2]{BalintChernovDolgopyat11}) which is trivially satisfied here (with $d=1$) since $JR_0$ is piecewise constant;
\item Proposition~\ref{prop:JR} (which corresponds to~\cite[Lemma~4.3]{BalintChernovDolgopyat11}).
\end{enumerate}

\vspace{-4ex}
\end{pfof}

If we already know that $(f_\Delta,\mu_\Delta)$ satisfies the Chernov axioms, then 
checking condition~\eqref{eq:JR2} can be done following closely the argument in the proof of~\cite[Lemma~3.2]{BalintChernovDolgopyat11}:

\begin{lemma} \label{lem:abstract-pairs}
Assume that $(f_\Delta,\mu_\Delta)$ satisfies the Chernov axioms and that there are constants $a,b,c\geq 0$ with $a\ge c$ such that:
\begin{enumerate}[i.]
\item[{\rm (i)}] $\{R_0=k\}$ is foliated by unstable curves of size $\approx k^{-a}$
and by stable curves of size $O(k^{-b})$.
\item[{\rm (ii)}] (Growth-type lemma) $f_\Delta$ expands unstable curves inside $\{R_0=k\}$ by a factor $\approx k^c$.
\end{enumerate}
Then 
\(
\mu_\Delta(R_0=k,\,R_0\circ f_\Delta^n =\ell) \ll k^{-(b+c)}\ell^{-a}
\)
for all $k,\ell,n\ge1$.
\end{lemma}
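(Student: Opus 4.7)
The plan is to mimic the standard-pair argument used in the proof of~\cite[Lemma~3.2]{BalintChernovDolgopyat11}, adapted to our setting in which $(f_\Delta,\mu_\Delta)$ is already known to satisfy the Chernov axioms, and in particular the growth lemma (A8). By $f_\Delta$-invariance of $\mu_\Delta$, the quantity to bound equals $\mu_\Delta\big(\{R_0=k\}\cap f_\Delta^{-n}\{R_0=\ell\}\big)$. I would attack this by disintegrating $\mu_\Delta$ restricted to $\{R_0=k\}$ along the unstable foliation given by hypothesis~(i), pushing unstable leaves forward one step, and then applying the growth lemma to estimate how much of the image can still land in $\{R_0=\ell\}$ after $n-1$ more iterates.

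First, write $\mu_\Delta|_{\{R_0=k\}}=\int \nu^u_\alpha\,dm(\alpha)$, where $\nu^u_\alpha$ is the conditional measure (comparable to arc length by bounded distortion) on an unstable leaf $W_\alpha$ of length $\approx k^{-a}$, and $m$ is the transverse factor measure, supported on a stable curve of length $O(k^{-b})$ and hence of total mass $O(k^{-b})$. Applying $f_\Delta$ once, hypothesis~(ii) gives that $f_\Delta W_\alpha$ is an unstable curve of length $\approx k^{c-a}$ (which is $O(1)$ since $a\ge c$), and bounded distortion gives $(f_\Delta)_*\nu^u_\alpha\approx k^{-c}\,\Leb_{f_\Delta W_\alpha}$. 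Combining these with the total mass $O(k^{-b})$ of $m$, the claimed bound reduces to showing, uniformly in $W_\alpha$,
\[
\Leb\big(f_\Delta W_\alpha\cap f_\Delta^{-(n-1)}\{R_0=\ell\}\big)\ll \ell^{-a}.
\]

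To establish this, I would push $f_\Delta W_\alpha$ forward by $f_\Delta^{n-1}$ and decompose the image into H-components (maximal smooth monotone pieces). For each H-component $\gamma$, hypothesis~(i) applied to $\{R_0=\ell\}$ says that this set is foliated by unstable curves of length $\approx \ell^{-a}$; since $\gamma$ is itself a single unstable curve, each connected component of $\gamma\cap\{R_0=\ell\}$ has length at most $O(\ell^{-a})$. Summed over H-components and pulled back to $f_\Delta W_\alpha$, the Chernov growth lemma~(A8) then controls the total Lebesgue contribution by $O(\ell^{-a})$.

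The main technical obstacle is precisely this last summation over H-components: one must balance the contribution of long H-components, which may cross several cells of $\{R_0=\ell\}$, against the bound on the total Lebesgue mass of such long H-components supplied by the growth lemma, using the expansion rate $\approx k^c$ to absorb the pullback Jacobian. This is carried out in detail in~\cite[Lemma~3.2]{BalintChernovDolgopyat11} via the standard-pair formalism; since the Chernov axioms are already verified for $(f_\Delta,\mu_\Delta)$ in Theorem~\ref{thm:Chernov}, the same argument goes through essentially verbatim in our setting once the three structural inputs above---disintegration on $\{R_0=k\}$, expansion $\approx k^c$, and the H-component geometry of $\{R_0=\ell\}$---are plugged in.
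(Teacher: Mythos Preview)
Your proposal is correct and follows the same route as the paper, which likewise points to~\cite[Lemma~3.2]{BalintChernovDolgopyat11}: disintegrate $\mu_\Delta$ on $\{R_0=k\}$, push forward once to obtain unstable curves of length $\approx k^{c-a}$, and then use the growth-lemma machinery to bound the fraction that lands in $\{R_0=\ell\}$ after $n-1$ further iterates. The only cosmetic difference is that the paper packages your H-component step in the standard-family/$Z$-function language of~\cite[Sect.~7.4]{ChernovMarkarian}, recording $Z(\mathfrak{G}_k)=O(k^{a-c})$ and invoking the Lasota--Yorke inequality $Z(f_\Delta^n\mathfrak{G}_k)\le c_1\vartheta^n Z(\mathfrak{G}_k)+c_2$ directly rather than unwinding the H-component decomposition by hand.
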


\begin{proof}
As mentioned, we follow~\cite[Lemma 3.2]{BalintChernovDolgopyat11}, noting that
our set $\{R_0=k\}$ is equivalent to their set $\mathcal M_k$. 
By (ii),
$f_\Delta$ expands $\{R_0=k\}$ in the unstable direction by a factor of the order of $k^c$,
 hence by~(i), $f_\Delta\{R_0=k\}$ is a set foliated by unstable
curves of size $\approx k^{c-a}$. Therefore the restriction of
$\mu_{\Delta}$ to $f_\Delta\{R_0=k\}$ can be represented by a standard family 
$\mfG_k$ with $Z$-function $Z(\mfG_k)=O(k^{a-c})$; 
see \cite[Sect.~7.4]{ChernovMarkarian} 
for the definitions and basic properties
of standard families and $Z$-functions. In particular, the $Z$-functions of the 
images $f_\Delta^n\mfG_k$ are also standard families and satisfy a Lasota-Yorke type inequality
$$
Z(f_\Delta^n\mfG_k)\leq c_1 \vartheta^n Z(\mfG_k)+c_2
$$ 
for some constants $c_1,c_2>0$ and $\vartheta\in(0,1)$. Hence $Z(f_\Delta^n\mfG_k)=O(k^{a-c})$
uniformly in $k,n$. 

Denote the $u$-size of $\{R_0=\ell\}$ by $u_\ell\approx \ell^{-a}$.
By \cite[Sect.~7.4]{ChernovMarkarian},
\[
\mu_\Delta(R_0=k,R_0\circ f_\Delta^n=\ell)
\ll \mu_\Delta(R_0=k)\cdot Z(f_\Delta^n\mfG_k)\cdot  u_\ell 
=O(k^{-(b+c)}\ell^{-a})
\]
as required.
\end{proof}

\section{Nonstandard WIP for surfaces with flat cylinder}
\label{sec:WIP}

In this section, we prove Theorems~\ref{thm:CLT} and~\ref{thm:WIP}. In particular, we prove a nonstandard WIP for the geodesic flow
$g_t:M\to M$ on a surface $S$ with flat cylinder for all $r\ge5$.

In Subsection~\ref{sec:g}, we construct a new first return map $g=g_h:\Sigma\to\Sigma$ where the first return time function $h:\Sigma\to\R^+$ is bounded above and below and H\"older.
To distinguish it from $f$, we call $g$ the \emph{global Poincar\'e map}.
In Subsection~\ref{sec:R}, we obtain asymptotics for the tail of the discrete first return time function $R:\Sigma_0\to\Z^+$ to $\Sigma_0$ under $g$.
In Subsection~\ref{sec:pfWIP}, we complete the proof of Theorems~\ref{thm:CLT} and~\ref{thm:WIP}.

\subsection{Global Poincar\'e map $g=g_h:\Sigma\to\Sigma$ with bounded $h$}
\label{sec:g}

In Section~\ref{sec:f}, we defined a uniformly hyperbolic first return map 
$f=g_\Upsilon:\Sigma_0\to\Sigma_0$.
The return time $\Upsilon:\Sigma_0\to(0,\infty)\cup\{\infty\}$ satisfies 
$\inf \Upsilon>0$ but is unbounded.

As in~\cite{LMM24}, we adjoin a section $\Omega_0\subset T^1\cR$ 
defined in a neighbourhood of the curve $\Xi([-L,L]\times\{0\})$ given by a disjoint union of two open disks
$$
\Omega_0=(-L-\chi,L+\chi)\times\{0\}\times\big\{(-\chi,\chi)\cup (\pi-\chi,\pi+\chi)\big\}.
$$
Define $\Sigma=\Sigma_0\cup\Omega_0$.
In this way, we obtain a global Poincar\'e map 
\[
g=g_h:\Sigma\to\Sigma,  \qquad  g(x)=g_{h(x)}(x),
\]
which is no longer uniformly hyperbolic but for which the 
return time $h:\Sigma\to(0,\infty)$ is bounded above and H\"older with $\inf h>0$.
We note that $h$ coincides with $\Upsilon$ on $\Sigma_0\setminus\Sigma_{\rm in}$.
As in Section~\ref{sec:g}, we define the normalised flux measure $\mu_\Sigma$ on $\Sigma$, 
giving a $g$-invariant probability measure.

\subsection{Discrete first return time function $R:\Sigma_0\to\Z^+$}
\label{sec:R}

Write $f=g^R$ where
\[
R:\Sigma_0\to\Z^+, \qquad R(x)=\inf\{n\ge1: g^nx\in\Sigma_0\}.
\]
Then $R$ is bounded on $\Sigma_0\setminus\Sigma_{\rm in}$ but unbounded on $\Sigma_{\rm in}$; indeed $R=\Upsilon+O(1)$ since $h$ is bounded above and below.
Moreover,
\[
R=R_\cC+R_\cN+O(1) =R_\cC+O(R_\cN)
\]
where $R_\cC$ is the transition time in the cylinder $\cC$ and
$R_\cN$ is the time spent in the neck $\cN$.

Recall from Section~\ref{sec:flux}
the notion of flux measure $A$ 
which defines the $f$-invariant probability measure $\mu_{\Sigma_0}=\frac{1}{A(\Sigma_0)}A$ on $\Sigma_0$.
The dynamical definition implies that if we are given 
two sections, then the holonomy map between sections sends the flux measure on the first
section to the flux measure on the second section.
Set
\[
\sigma_R^2= 
\frac{4L^2}{A(\Sigma_0)\pi}.
\]

\begin{lemma} \label{lem:RC}
\begin{enumerate}[{\rm (1)}]
\item $\mu_{\Sigma_0}(R_\cC=n)= 2\sigma_R^2 n^{-3}+O(n^{-4})$.
\item There exists $p>2$ such that $R-R_\cC\in L^p$.
\end{enumerate}
\end{lemma}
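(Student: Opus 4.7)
The plan is to prove~(1) by an explicit flux-measure computation in the flat cylinder $\cC$, exploiting that geodesics there are straight lines, and then to deduce~(2) from~(1) via a tail bound on the angular change accumulated in the slivers $\Omega_0\setminus T^1\cC$.

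For~(1), observe that $\{R_\cC=n\}$ is concentrated in $\Sigma_{\rm in}$ near the four families of asymptotic vectors. By flow-invariance of the flux measure, I would push $\{R_\cC=n\}\cap\Sigma_{\rm in}$ by flow holonomy to the section $\{s=\mp L\}$ of unit vectors entering $\cC$; there Section~\ref{sec:flux} gives the flux density $\sin\psi\,d\theta\,d\psi=|dc|\,d\theta$ in Clairaut coordinates. Inside $\cC$ the geodesic is the straight line $(s(t),\theta(t))=(\mp L+t\sqrt{1-c^2},\,\theta_0+tc)$ with total angular change $\Delta\theta(c)=2Lc/\sqrt{1-c^2}$, and $R_\cC$ counts the times $\theta(t)\in 2\pi\Z$. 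A short counting argument gives, for fixed $c$,
\[
\int_0^{2\pi}\mathbf{1}_{\{R_\cC(\theta_0,c)=n\}}\,d\theta_0=\max\bigl\{0,\,2\pi-|\Delta\theta(c)-2\pi n|\bigr\}.
\]
I then change variables to $u=\Delta\theta(c)-2\pi n$ with $dc/du=(1-c^2)^{3/2}/(2L)$, and let $c_n$ solve $\Delta\theta(c_n)=2\pi n$, so that $1-c_n^2=L^2/(\pi^2 n^2)+O(n^{-4})$. Since the triangular kernel $(2\pi-|u|)$ is symmetric, the odd-order Taylor corrections of $dc/du$ around $c_n$ vanish under integration, yielding
\[
A\bigl(\{R_\cC=n\}\cap(\text{one family})\bigr)=\frac{4\pi^2(1-c_n^2)^{3/2}}{2L}+O(n^{-5})=\frac{2L^2}{\pi n^3}+O(n^{-5}).
\]
Summing the four symmetric contributions (near $c=\pm 1$ at $s=\pm L$) and dividing by $A(\Sigma_0)$ gives
\[
\mu_{\Sigma_0}(R_\cC=n)=\frac{8L^2}{\pi A(\Sigma_0)\,n^3}+O(n^{-4})=\frac{2\sigma_R^2}{n^3}+O(n^{-4}).
\]

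For~(2), $R-R_\cC$ counts the discrete returns of $g$ outside $\cC$ during an excursion: the terminal hit at $\Sigma_{\rm out}$ (trivially bounded) plus the hits in the slivers $\Omega_0\cap T^1\cN$ at $|s|\in[L,L+\chi]$. The sliver angular change for a crossing geodesic is controlled by
\[
\int_0^\chi\frac{da}{\sqrt{\xi(L+a)^2-c^2}}\ \ll\ \int_0^\chi\frac{da}{\sqrt{a^r+(1-|c|)}},
\]
which by~\cite[Proposition~4.3(1)]{LMM24} is $O(n^{(r-2)/r})$ on $\mfD_n^<$; an analogous integral near the turning point gives the same bound on $\mfD_n^>$. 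Hence $R-R_\cC=O(n^{(r-2)/r})$ on each $\mfD_n$, and combining with the tail $\mu_{\Sigma_0}(\mfD_n)\ll n^{-3}$ from~(1) yields $\mu_{\Sigma_0}(R-R_\cC>t)\ll t^{-2r/(r-2)}$. Since $r>4$ implies $2r/(r-2)>2$, any $p\in(2,\,2r/(r-2))$ suffices.

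The main obstacle to watch for is the bookkeeping in~(1): enumerating the four symmetric families responsible for the coefficient $2\sigma_R^2$, verifying that the symmetry of the triangular kernel kills the first-order Taylor correction of $dc/du$ around $c_n$, and confirming that the residual higher-order corrections stay within the claimed $O(n^{-4})$ error. Part~(2) is then essentially immediate from~(1) and the sliver bound.
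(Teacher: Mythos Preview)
Your proposal is correct and follows essentially the same route as the paper for both parts, though with some differences in execution worth noting.

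For part~(1), the paper simplifies by treating $R_\cC$ as a function of $\psi$ alone: it declares $\{R_\cC=n\}=\{\tan\psi\in\pm I\}$ with $I=\big(\tfrac{L}{(n+1)\pi},\tfrac{L}{n\pi}\big]$ and then computes the one-line integral $\tfrac{4\cdot 2\pi}{A(\Sigma_0)}\int_{\tan\psi\in I}\sin\psi\,d\psi$. This is legitimate because any $O(1)$ ambiguity in the definition of $R_\cC$ is absorbed harmlessly into part~(2). Your triangular-kernel computation is the more honest version of the same calculation: you keep track of the $\theta_0$-dependence of the crossing count, integrate it out, and exploit the kernel symmetry to kill the first-order Taylor correction. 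Both land on $\tfrac{8L^2}{\pi A(\Sigma_0)}n^{-3}$; your route is longer but more rigorous, and your claim of an $O(n^{-5})$ per-family remainder is indeed correct (the first nonvanishing correction comes from $d^3c/du^3\sim(1-c_n^2)^{5/2}$).

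For part~(2), the paper simply identifies $R-R_\cC$ with the neck time $R_\cN=\Upsilon_\cN+O(1)$ and invokes \cite[Lemma~5.4]{LMM24} for the tail $\mu_{\Sigma_0}(\Upsilon_\cN>n)\approx n^{-2r/(r-2)}$. Your sliver estimate $\int_0^\chi da/\sqrt{a^r+(1-|c|)}\ll n^{(r-2)/r}$ on $\mfD_n$ is exactly the content of that lemma (combined with Lemma~\ref{lem:Cn}(1)), so you are rederiving what the paper cites. The conclusion $p\in(2,2r/(r-2))$ is identical.
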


\begin{proof}
(1)
Clearly, $R_\cC=0$ in the bouncing case. 
In the crossing case, there are four possibilities: trajectories may traverse the cylinder from left to right or vice versa and may spiral clockwise or counterclockwise.
Since geodesics are spiralling around a cylinder of length $2L$ and circumference $2\pi$, we see that $R_\cC=n$ if and only if 
$\tan\psi\in \pm I$ where $I=\Big(\frac{L}{(n+1)\pi},\frac{L}{n\pi}\Big]$.
Recalling that the flux measure on $\{-L,L\}\times \bbS^1\times (0,\pi)$ is $\sin\psi\, d\theta\, d\psi$,
\[
\mu_{\Sigma_0}(R_\cC=n)=\frac{A(R_\cC=n)}{A(\Sigma_0)}
=\frac{1}{A(\Sigma_0)}\cdot 4\cdot 2\pi \cdot \int_{\tan\psi\in I}\sin\psi\,d\psi
=\frac{8L^2}{A(\Sigma_0)\pi}\,\frac{1}{n^3} + O(n^{-4}).
\]

\vspace{1ex} \noindent
(2) 
Let $\Upsilon_\cN$ be the time spent by a geodesic in the neck $\cN$ during an excursion in the surface of revolution $\cR$,
	(so $\Upsilon_\cN=2\Upsilon_1$ in Lemma~\ref{lem:Cn}).
By~\cite[Lemma~5.4]{LMM24}, 
$\mu_{\Sigma_0}(\Upsilon_\cN>n)\approx n^{-2r/(r-2)}$.
Hence $R_\cN=\Upsilon_\cN+O(1)\in L^p$ for all $p<2r/(r-2)$.
\end{proof}

\begin{lemma} \label{lem:pairs}
$\mu_{\Sigma_0}(R_\cC=k,\,R_\cC\circ f^n =\ell) \ll  k^{-3}\ell^{-3}$
for all $k,\ell,n\ge1$.
\end{lemma}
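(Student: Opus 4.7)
The plan is to apply Lemma~\ref{lem:abstract-pairs} to the piecewise constant observable $R_\cC$ on the Chernov system $(f,\mu_{\Sigma_0})$ from Section~\ref{sec:f}, with exponents $a=c=3$ and $b=0$. The hypothesis $a\ge c$ holds with equality, and the condition that stable curves in $\{R_\cC=k\}$ have size $O(k^{-b})=O(1)$ is trivially true. Two substantive facts remain to check: (i) unstable curves in $\{R_\cC=k\}$ have length $\approx k^{-3}$, and (ii) $f$ expands these unstable curves by a factor $\approx k^3$. With these choices the abstract lemma yields $k^{-(b+c)}\ell^{-a}=k^{-3}\ell^{-3}$, exactly the claim.

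Fact (ii) is immediate: a point with $R_\cC=k$ sits in the crossing homogeneity band $\mfD_n^<$ with $n\approx k$, so Lemma~\ref{lem:hyperbolicity}(3b) gives $\|df|_{E^u}\|\approx n^3\approx k^3$. For (i), I would first compute the $\psi$-width of $\{R_\cC=k\}$ on $\Sigma_{\rm in}$ by pulling back from the cylinder boundary $s=\pm L$. The proof of Lemma~\ref{lem:RC}(1) shows that at $s=\pm L$ the condition $R_\cC=k$ corresponds to $\tan\psi'\in(L/((k+1)\pi),L/(k\pi)]$, a $\psi'$-interval of length $\approx k^{-2}$. Differentiating the Clairaut relation $\xi(\pm\ve_1)\cos\psi=\cos\psi'$ with $\sin\psi'\approx \psi'\approx 1/k$ yields a $\psi$-interval at $s=\pm\ve_1$ of length $\approx k^{-3}$. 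Since Lemma~\ref{lem:hyperbolicity}(2a) states that $E^u$ has bounded slope in $(\theta,\psi)$-coordinates, unstable curves within this thin annulus have length $\approx k^{-3}$ as claimed.

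The main obstacle is the width computation just sketched. The set $\{R_\cC=k\}$ is most naturally described on the cylinder boundary, where its $\psi'$-width is $k^{-2}$; however, the Chernov-axiom machinery and the expansion estimates of Lemma~\ref{lem:hyperbolicity} apply on $\Sigma_{\rm in}$ at the neck cross-section $s=\pm\ve_1$, where the Clairaut transformation shrinks the $\psi$-width to $k^{-3}$. One must consistently work on $\Sigma_{\rm in}$ so that the standard-family machinery from \cite[Section~7.4]{ChernovMarkarian} used in the proof of Lemma~\ref{lem:abstract-pairs} applies. Once this geometric transfer is in place, the remainder is a direct application of Lemma~\ref{lem:abstract-pairs}; the implicit measure estimate $\mu(R_0=k)\approx k^{-(a+b)}=k^{-3}$ used in its proof matches the actual value from Lemma~\ref{lem:RC}(1), so no further adjustment is needed.
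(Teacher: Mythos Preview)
Your proposal is correct and follows essentially the same route as the paper: apply Lemma~\ref{lem:abstract-pairs} with $a=c=3$, $b=0$, using that $\{R_\cC=k\}$ lies in crossing bands $\mfD_n^<$ with $n\approx k$, so that the expansion factor is $\approx k^3$ and the unstable width is $\approx k^{-3}$. The only cosmetic difference is that the paper reads off the $\approx n^{-3}$ unstable width directly from the definition of $\mfC_n$ (since $c$-width $\approx n^{-3}$ and $\partial c/\partial\psi\approx 1$ at $s=\pm\ve_1$), whereas you obtain the same $k^{-3}$ by transporting the $k^{-2}$ width at $s=\pm L$ through the Clairaut relation; both computations are equivalent.
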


\begin{proof}
By Theorem~\ref{thm:Chernov},
$f:\Sigma_0\to\Sigma_0$ satisfies the Chernov axioms. 
Hence we are able to apply Lemma~\ref{lem:abstract-pairs}.
We take $b=0$. It remains to show that we can take $a=c=3$ in Lemma~\ref{lem:abstract-pairs}.

For geodesics starting in $\mfC_n^<$,
we have $\cos\psi=1-n^{-2}+O(n^{-3})$, so $\tan\psi=\sqrt{2}\,n^{-1}+O(n^{-2})$.
Since geodesics are spiralling around a cylinder of length $2L$ and circumference $2\pi$, it holds that
$|R_\cC-\frac{L}{\pi \tan\psi}|\le1$.
Hence $R_\cC|_{\mfC_n^<} =\frac{L}{\sqrt 2\,\pi}n+O(1)$.
It follows that $\{R_\cC=k\}$ is contained in a union of homogeneity bands $\mfC_n^<$
with
$n\approx k$.

By Lemma~\ref{lem:Cn}(3), 
$|\zeta'(\psi)|\approx n^3\approx k^3$ on $\mfC_n^<$.
This verifies that $c=3$ in Lemma~\ref{lem:abstract-pairs}.
By Lemma~\ref{lem:Cn}(3), unstable curves are aligned parallel to the $\psi$-axis. Hence, by definition of $\mfC_n$, 
unstable curves have size $\approx n^{-3}\approx k^{-3}$ in $\mfC_n$.
This verifies that $a=3$ in Lemma~\ref{lem:abstract-pairs}.
\end{proof}

By Theorem~\ref{thm:Chernov},
the uniformly hyperbolic first return map $f:\Sigma_0\to\Sigma_0$ satisfies the Chernov axioms and hence, by~\cite[Theorem~3.1]{LMM24}, 
is modelled by a Young tower with exponential tails~\cite{Young98}.
The discrete return time $R$ lifts to the tower and is piecewise constant, so is well-defined and piecewise constant on the one-sided Young tower 
$(\Delta,f_\Delta,\mu_\Delta)$ as in Section~\ref{sec:CLT}.

\begin{cor} \label{cor:RCLT}
$R:\Delta\to\Z^+$ satisfies a nonstandard CLT 
on $(\Delta,f_\Delta,\mu_\Delta)$. That is, 
$(n\log n)^{-1/2}(\sum_{j=0}^{n-1}R\circ f_\Delta^j-n\bar R)\to_d N(0,\sigma_R^2)$ where
$\bar R=\int_\Delta R\,d\mu_\Delta$.
\end{cor}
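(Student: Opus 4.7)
The strategy is to decompose $R = R_\cC + \Psi$ with $\Psi := R - R_\cC$, establish the nonstandard CLT for the piecewise constant ``main'' piece $R_\cC$ via Theorem~\ref{thm:JR}, and absorb the remainder $\Psi$ using its $L^p$-integrability for some $p>2$ supplied by Lemma~\ref{lem:RC}(2). Slutsky's lemma then combines the two contributions.

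For the main step, apply Theorem~\ref{thm:JR} on $(\Delta,f_\Delta,\mu_\Delta)$ with $R_0 := R_\cC$ and $J\equiv 1$, so that $|I|=1$, $\alpha_1=1$, and $\sigma_J^2 = \sigma_1^2 = \sigma_R^2$. One first checks that $R_\cC$ lifts to a piecewise constant observable on $\Delta$ in the sense of Section~\ref{sec:CLT}: the level sets $\{R_\cC=k\}$ are parallel strips in $\Omega_{\rm in}$ refining the homogeneity bands $\mfD_n$, which by the construction of $f$ in Section~\ref{sec:f} are unions of Gibbs--Markov cells of the Young tower. With this observation, condition~\eqref{eq:JR1} is precisely Lemma~\ref{lem:RC}(1) (with $\eps = 1$), while condition~\eqref{eq:JR2} is Lemma~\ref{lem:pairs} (which even supplies the stronger bound $\ll k^{-3}\ell^{-3}$). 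Theorem~\ref{thm:JR} therefore yields
\[
(n\log n)^{-1/2}\sum_{j=0}^{n-1}(R_\cC - \bar R_\cC)\circ f_\Delta^j \to_d N(0,\sigma_R^2).
\]

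For the remainder step, $\Psi = R_\cN + O(1)$ is, up to a bounded additive error, a H\"older function of the Clairaut angle on each homogeneity band, hence becomes a dynamically H\"older observable on $\Delta$ after a suitable refinement of the Markov partition. Combining Lemma~\ref{lem:RC}(2) with exponential decay of correlations for the Young tower~\cite{Young98} and a truncation $\Psi = \Psi_N + (\Psi - \Psi_N)$ with $\Psi_N = \Psi\,\mathbf{1}_{\{|\Psi|\le N\}}$ yields the variance bound
\[
\operatorname{Var}_{\mu_\Delta}\Big(\sum_{j=0}^{n-1}(\Psi - \bar\Psi)\circ f_\Delta^j\Big) = O(n).
\]
By Chebyshev's inequality, $(n\log n)^{-1/2}\sum_{j=0}^{n-1}(\Psi - \bar\Psi)\circ f_\Delta^j \to 0$ in probability, and Slutsky's lemma combines this with the previous display (using $\bar R = \bar R_\cC + \bar\Psi$) to finish.

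The main technical point is the variance estimate for $\Psi$: one must balance the H\"older seminorm of $\Psi_N$, which grows with the truncation level $N$, against the $L^2$-tail $\|\Psi-\Psi_N\|_{L^2}$, which tends to zero polynomially as $N\to\infty$ since $p>2$. A suitable choice $N = N(n)$ produces the desired $O(n)$ variance bound. This step is standard for systems modelled by Young towers with exponential tails, but care is needed so that the $N$-dependence of the H\"older constants of the truncations does not destroy the gain coming from exponential decay of correlations.
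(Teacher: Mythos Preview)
Your main step---applying Theorem~\ref{thm:JR} with $R_0=R_\cC$ and $J\equiv1$ to obtain the nonstandard CLT for $R_\cC$---is correct and identical to the paper's argument.

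The gap is in your treatment of the remainder $\Psi=R-R_\cC$. The truncation scheme you outline does \emph{not} give $\operatorname{Var}\big(\sum_{j=0}^{n-1}\Psi\circ f_\Delta^j\big)=O(n)$, nor even $o(n\log n)$. On the tower $\Delta$, exponential decay of correlations for piecewise constant observables carries an $\|\cdot\|_\infty$ factor, so $\operatorname{Var}\big(S_n(\Psi_N)\big)\ll nN^2$, while the crude $L^2$ bound on the tail gives $\operatorname{Var}\big(S_n(\Psi-\Psi_N)\big)\ll n^2N^{2-p}$. Balancing at $N=n^{1/p}$ yields $\operatorname{Var}(S_n(\Psi))\ll n^{1+2/p}$, and since $n^{2/p}/\log n\to\infty$ for every finite $p$, Chebyshev does not give convergence to zero after division by $(n\log n)^{1/2}$. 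The issue is precisely the one you flag in your last paragraph, but it is not resolvable by a smarter choice of $N(n)$: the two constraints $N^2=O(\log n)$ and $N^{p-2}\gg n/\log n$ are incompatible.

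The paper handles $\Psi$ (called $H$ there) differently: it induces to the Gibbs--Markov base $Z$, forming $H_\tau=\sum_{\ell=0}^{\tau-1}H\circ f_\Delta^\ell$, which is still piecewise constant and lies in $L^q$ for $q\in(2,p)$ since $\tau$ has exponential tails. On a full-branch Gibbs--Markov map, piecewise constant $L^q$ observables satisfy a \emph{standard} CLT (this is \cite[Corollary~2.5]{MN05}; morally, after one application of the transfer operator such observables become uniformly Lipschitz with norm controlled by $\|H_\tau\|_{L^1}$ rather than $\|H_\tau\|_\infty$). This standard CLT is then lifted from $(Z,F)$ back to $(\Delta,f_\Delta)$ via~\cite{MT04}, giving $(n\log n)^{-1/2}S_n(H)\to_d 0$. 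The missing idea in your argument is this passage to the base: the $\|\cdot\|_\infty$ dependence that spoils the truncation on $\Delta$ disappears there.
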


\begin{proof} 
By Lemma~\ref{lem:RC}(1), condition~\eqref{eq:JR1} holds (with $J\equiv1$, $\eps=1$ and $R_0=R_\cC$).
Similarly, condition~\eqref{eq:JR2} holds by Lemma~\ref{lem:pairs}.
It thereby follows from Theorem~\ref{thm:JR} that 
$R_\cC$ satisfies a nonstandard CLT with variance $\sigma_R^2$.

The following standard argument shows that this limit theorem for $R_\cC$ is inherited by $R$.
By Lemma~\ref{lem:RC}(2), $R=R_\cC+H$ where $H\in L^p$ for some $p>2$.
Let $H_\tau=\sum_{\ell=0}^{\tau-1}H\circ f_\Delta^\ell:Z\to\R$.
Since $\tau$ has exponential tails, $R_\tau\in L^q$ for $q\in(2,p)$.
Also, $R_\tau$ is piecewise constant so it is well-known (see for example~\cite[Corollary~2.5]{MN05}) that 
$R_\tau$ satisfies a standard CLT on $(F,Z,\mu_Z)$ (with normalisation $n^{1/2}$).
By~\cite{MT04}, $H$ satisfies a standard CLT on $(f_\Delta,\Delta,\mu_\Delta)$.
Hence 
$(n\log n)^{-1/2}(\sum_{j=0}^{n-1}H\circ f_\Delta^j-n\bar H)\to_d 0$ where
$\bar H=\int_\Delta H\,d\mu_\Delta$.
The result follows.
\end{proof}

\subsection{Proof of (non)standard CLT and WIP for the flow}
\label{sec:pfWIP}

In this subsection, we complete the proof of 
Theorems~\ref{thm:CLT} and~\ref{thm:WIP}.
Let $v:M\to\R$ be a H\"older observable with $\int_M v\,d\mu=0$.
Define induced observables
\[
v_h:\Sigma\to\R, \qquad V:\Sigma_0\to\R,
\]
\[
v_h=\int_0^h v\circ g_t\,dt,
\qquad
V=\sum_{j=0}^{R-1} v_h\circ g^j.
\]
Define
\begin{equation} \label{eq:alpha}
\alpha_0=\frac{1}{2L}\int_{\bbS^1}\int_{-L}^L v(s,\theta,0)\,ds\,d\theta, \qquad
\alpha_\pi=\frac{1}{2L}\int_{\bbS^1}\int_{-L}^L v(s,\theta,\pi)\,ds\,d\theta.
\end{equation}

\begin{lemma}  \label{lem:V}
There exists $p>2$ such that
\[
V-J R_\cC \in L^p
\]
where $J$ takes the values $\alpha_0$ and $\alpha_\pi$
each with probability $\frac12$ on $(\Sigma_0,\mu_{\Sigma_0})$.
\end{lemma}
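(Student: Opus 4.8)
The plan is to decompose the induced observable $V = \sum_{j=0}^{R-1} v_h \circ g^j$ into a main contribution coming from the long excursions through the flat cylinder $\cC$ and an $L^p$-bounded remainder. The key geometric fact is that when a crossing geodesic spends time $\approx R_\cC$ inside $\cC$, its $\psi$-coordinate is within $O(n^{-1})$ of either $0$ or $\pi$ (corresponding to whether it spirals clockwise or counterclockwise), so the portion of the orbit inside $\cC$ closely shadows one of the two families of periodic orbits foliating $\cC$. The Birkhoff sum of $v$ over that portion is therefore well-approximated by $R_\cC$ times the spatial average of $v$ along the relevant periodic family, which is exactly $\alpha_0$ or $\alpha_\pi$ by the formulas~\eqref{eq:alpha}.

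\textbf{Key steps.} First, I would write $V = V_\cC + V_\cN + V_{\mathrm{out}}$, where $V_\cC$ is the integral of $v$ along the time spent in $\cC$ during the single long excursion (when $x \in \Sigma_{\rm in}$, otherwise $V_\cC=0$), $V_\cN$ is the integral over the time spent in the neck $\cN$ during that excursion, and $V_{\mathrm{out}}$ collects all remaining contributions (the bounded part of the orbit in $T^1(S\setminus\cR)$, plus boundary terms from $\mathfrak{t}_{\rm in}, \mathfrak{t}_{\rm out}$). Since $v$ is bounded, $|V_{\mathrm{out}}| \ll R - R_\cC - R_\cN + O(1)$ and $|V_\cN| \ll R_\cN$, so both are controlled: by Lemma~\ref{lem:RC}(2), $R - R_\cC \in L^p$ for some $p > 2$, hence $V_\cN + V_{\mathrm{out}} \in L^p$. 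Second, I would estimate $V_\cC - J R_\cC$. On the set $\{R_\cC = n\}$ inside $\mfC_n^<$ (say the counterclockwise-through-$\psi\approx 0$ component), the geodesic enters $\cC$ at $s = \pm L$ with $\psi$ within $O(n^{-1})$ of $0$, and Clairaut's relation forces $|\psi(t)| \ll n^{-1}$ throughout the crossing. Parametrising the crossing by $s$, one has $ds/dt = \sin\psi = \sqrt{1 - c^2}(1 + O(n^{-2})) $, constant up to $O(n^{-2})$, so the trajectory is, to leading order, $\theta(t) = \theta_0 + t\cos\psi$, i.e. it winds uniformly around $\cC$. Using that $v$ is Hölder, $v(s(t),\theta(t),\psi(t)) = v(s(t),\theta(t),0) + O(n^{-\eta})$ for the Hölder exponent $\eta$, and then $\int_0^{R_\cC} v(s(t),\theta(t),0)\,dt$ equals $R_\cC \cdot \frac{1}{2L}\int_{\bbS^1}\int_{-L}^L v(s,\theta,0)\,ds\,d\theta + O(1)$ by equidistribution of the line $\{(s(t),\theta(t))\}$ on the torus $[-L,L]\times\bbS^1$ (the error $O(1)$ because the winding number is $\approx n$ and the discrepancy per turn is $O(1)$ uniformly, using $v$ Hölder). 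This gives $V_\cC = \alpha_0 R_\cC + O(R_\cC^{1-\delta})$ on that component for some $\delta > 0$, and symmetrically $\alpha_\pi$ on the other. Since $R_\cC^{1-\delta}$ has all moments (its tail is $\approx n^{-3}$), the error lies in $L^p$. Third, I would define $J: \Sigma_0 \to \{\alpha_0, \alpha_\pi\}$ to be $\alpha_0$ on the two components of $\bigcup_n \mfD_n^<$ that spiral near $\psi = 0$ and $\alpha_\pi$ on the two that spiral near $\psi = \pi$, and $J$ arbitrary (say $\alpha_0$) off $\Sigma_{\rm in}$ where $R_\cC = 0$ anyway. By the flux-measure computation in Lemma~\ref{lem:RC}(1)—which splits $\mu_{\Sigma_0}(R_\cC = n)$ into four equal pieces—and the fact that the two $\psi\approx0$ families together carry half the mass of $\{R_\cC = n\}$ for every $n$, $J$ takes each value with probability $\frac12$ conditioned on any $\{R_\cC = n\}$, hence with probability $\frac12$ overall on $(\Sigma_0,\mu_{\Sigma_0})$.

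\textbf{Main obstacle.} The delicate point is the equidistribution estimate giving $\int_0^{R_\cC} v(s(t),\theta(t),0)\,dt = \alpha_0 R_\cC + O(1)$ with an error uniformly $O(1)$ (or even $O(R_\cC^{1-\delta})$) rather than something growing like $R_\cC$. This requires that the linear flow $(s,\theta) \mapsto (s + t\sin\psi, \theta + t\cos\psi)$ on the flat torus $[-L,L]\times\bbS^1$, reflected at $s=\pm L$—actually it does not reflect, the geodesic passes straight through the cylinder in $s$—has slope $\tan\psi = \sqrt{2}/n + O(n^{-2})$ close to but not exactly rational, so that over the $\approx n$ windings the sampled values of $v$ average out with only bounded discrepancy. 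Here one uses that $v$ is Hölder together with the fact that the total variation of the ``vertical'' discrepancy is controlled; alternatively, one can observe that over one full pass through $\cC$ the trajectory is within $O(1)$ (in the $\theta$ direction, measured in the quotient) of a closed orbit of the linear flow, and then a straightforward telescoping over the $O(1)$-many ``resonance corrections'' gives the bound. This is the same type of estimate underlying the classical analysis of the infinite-horizon Lorentz gas and is carried out in~\cite{LMMsub}; I expect to cite or adapt that argument. The remaining steps (bounding $V_\cN$, $V_{\mathrm{out}}$ via Lemma~\ref{lem:RC}(2), and the probabilistic identification of $J$ via Lemma~\ref{lem:RC}(1)) are routine.
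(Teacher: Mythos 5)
Your proposal is essentially correct and follows the same overall strategy as the paper: split off the cylinder contribution, control the neck and exterior contributions in $L^p$ via Lemma~\ref{lem:RC}(2), approximate the cylinder contribution by $JR_\cC$ using the spiralling geometry, and identify the distribution of $J$ by symmetry. The one implementation difference worth noting is that the paper avoids the equidistribution concern you flag by working with the \emph{discrete} iterates of the global Poincar\'e map $g$ through the section $\Omega_0\subset\{\theta=0\}$: at $\psi=0$ the induced observable satisfies $v_h(s,0,0)=\int_{\bbS^1}v(s,\theta,0)\,d\theta$, so the $\theta$-average appears automatically, and what remains is a one-dimensional Riemann sum in $s$ with mesh $2\pi\tan\psi_0\approx R_\cC^{-1}$. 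Your continuous-time two-variable formulation reaches the same conclusion, but the natural way to close it is exactly this two-step structure (average over each $\theta$-winding to accuracy $O(R_\cC^{-\eta})$, then sum over the $\approx R_\cC$ monotone $s$-slices as a Riemann sum), rather than a genuine discrepancy estimate for an irrational linear flow. In particular your claimed error $O(1)$ is stronger than achievable from these estimates and unnecessary: the paper gets $O(R_\cC^{1-\eta})$, which lies in $L^p$ for some $p>2$ because $R_\cC\in L^q$ for all $q<2$ by Lemma~\ref{lem:RC}(1). Your aside that ``the discrepancy per turn is $O(1)$'' is also misphrased — $O(1)$ per turn over $\approx R_\cC$ turns would give $O(R_\cC)$, not $O(1)$; what you actually want (and what Hölder continuity gives) is error $O(R_\cC^{-\eta})$ per turn. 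Finally, your treatment of $J$ off $\Sigma_{\rm in}$ is fine since $R_\cC=0$ there, but then the literal marginal statement ``$J$ takes each value with probability $\tfrac12$'' is to be read conditionally on $\{R_\cC=n\}$, which is what feeds condition~\eqref{eq:JR1}; this matches the paper's usage.
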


\begin{proof}
Since $v$ and $h$ are H\"older and $g_t$ is smooth, there exists $\eta>0$ such that $v_h$ is $C^\eta$.
We claim that $V=JR_\cC+O(R_\cC^{1-\eta})+O(R_\cN)$.
By Lemma~\ref{lem:RC}(2), 
$R_\cN\in L^p$ for some $p>2$.
By Lemma~\ref{lem:RC}(1), $R_\cC\in L^q$ for all $q<2$ so $R_\cC^{1-\eta}\in L^p$ for some (possibly smaller) $p>2$, proving the result.

It remains to prove the claim.
Recall that the section $\Omega_0\subset \cR$ is contained in $\{\theta=0\}$ and is a neighbourhood therein of $\{s\in[-L,L],\,\psi=0\}\cup\{s\in[-L,L],\,\psi=\pi\}$. Accordingly, we write
$\Omega_0\cap\cC=\Omega_{0,0}\cup \Omega_{0,\pi}$.

For $\chi>0$ chosen small enough in the definition of $\Omega_0$, we can ensure that geodesics crossing $\cR$ can intersect at most one of  $\Omega_{0,0}$ and $\Omega_{0,\pi}$.
For $p\in\Omega_{0,0}$, let $p_j$, $j=0,\dots,R_\cC$, be the iterates of $p$ under $g$ that lie in $\Omega_{0,0}$.
Then $p_j=(s_j,0,\psi_0)$ where 
\[
\tan\psi_0=L(\pi R_\cC)^{-1}+O(R_\cC^{-2}),
\]
 and 
$R_\cC=[L/(\pi\tan\psi_0)]$.
Moreover, $s_j=s_0+2\pi j \tan\psi_0$ for each $j=1,\dots,R_\cC$.

Since $v_h$ is bounded, 
\[
V(p) =\sum_{j=1}^{R_\cC} v_h(p_j) + O(R_\cN).
\]
Since $v_h$ is $C^\eta$,
\[
|v_h(p_j)-v_h(s_j,0,0)|=  |v_h(s_j,0,\psi_0)-v_h(s_j,0,0)|\ll
|\psi_0|^\eta \ll R_\cC^{-\eta}.
\]
It follows that
\[
V(p) =\sum_{j=1}^{R_\cC} v_h(s_j,0,0) + O(R_\cC^{1-\eta}) + O(R_\cN) 
\]
so it remains to estimate 
$\sum_{j=1}^{R_\cC} v_h(s_j,0,0)$.

Let $\overline{S}$ and $\underline{S}$ be upper and lower Riemann sums for 
$\int_{-L}^L v_h(s,0,0)\,ds$
with respect to the subdivision $-L\le s_0<\dots<s_{R_\cC}\le L$.
Since $s_j-s_{j-1}\ll R_\cC^{-1}$ and
$v_h$ is $C^\eta$,
$\overline{S}-\underline{S}\ll R_\cC^{-\eta}$.
Moreover,
\[
\underline{S}\le \int_{-L}^L v_h(s,0,0)\,ds \le \overline{S},
\qquad  \underline{S} \le \sum_{j=1}^{R_\cC} v_h(s_j,0,0)(s_j-s_{j-1}) \le \overline{S},
\]
so
\[
\left|\int_{-L}^L v_h(s,0,0)\,ds - \sum_{j=1}^{R_\cC} v_h(s_j,0,0)(s_j-s_{j-1})\right|
\le \overline{S}-\underline{S}
\ll R_\cC^{-\eta}.
\]
We deduce that 
\begin{align*}
\sum_{j=1}^{R_\cC} v_h(s_j,0,0)
 & =(2\pi\tan\psi_0)^{-1}\sum_{j=1}^{R_\cC} v_h(s_j,0,0)(s_j-s_{j-1}).
\\
 & =(2\pi\tan\psi_0)^{-1}\left[\int_{-L}^L v_h(s,0,0)\,ds+O(R_\cC^{-\eta})\right]
\\ &
=R_\cC\frac{1}{2L}\int_{-L}^L v_h(s,0,0)\,ds+O(R_\cC^{1-\eta}) .
\end{align*}
Moreover,
\[
v_h(s,0,0)=\int_0^{h(s,0,0)}(v\circ g_t)(s,0,0)\,dt=\int_{\bbS^1}v(s,\theta,0)\,d\theta,
\]
completing the estimate for geodesics intersecting $\Omega_{0,0}$.
By time-reversibility, the calculation for 
geodesics intersecting $\Omega_{0,\pi}$ is identical and the two cases are equally likely.
\end{proof}

As in the introduction, we set $I_v=\alpha_0^2+\alpha_\pi^2$.
Define 
\[
\sigma_J^2=\tfrac12\sigma_R^2I_v,
\qquad
\sigma_v^2=\bar h^{-1}\bar R^{-1}\sigma_J^2.
\]
where $\bar R=\int_{\Sigma_0}R\,d\mu_{\Sigma_0}$ and
$\bar h=\int_\Sigma h\,d\mu_\Sigma$.

\vspace{1ex}
\begin{pfof}{Theorems~\ref{thm:CLT} and~\ref{thm:WIP}}
Since $f$ is modelled by a
Young tower with exponential tails,
we can apply~\cite[Theorem~5.1]{LMMsub}
(with $M$, $\Sigma$ and $\Sigma_0$ playing the roles of $\Gamma^h$, $\Gamma$ and $\Delta$ respectively).

By Lemma~\ref{lem:RC},
\[
\mu_{\Sigma_0}(R_\cC=n,J=\alpha_0)= \mu_{\Sigma_0}(R_\cC=n,J=\alpha_\pi)=\tfrac12 \mu_{\Sigma_0}(R_\cC=n)=
\sigma_R^2 n^{-3}+O(n^{-4}).
\]
Hence, condition~\eqref{eq:JR1} holds (with $\eps=1$ and $R_0=R_\cC$).
Similarly, condition~\eqref{eq:JR2} holds by Lemma~\ref{lem:pairs}.
It thereby follows from Theorem~\ref{thm:JR} that 
$JR_\cC$ satisfies a nonstandard CLT with variance 
$\sigma_J^2\ge0$.
Also, $R$ satisfies a nonstandard CLT with variance $\sigma_R^2$ by Corollary~\ref{cor:RCLT}.
By Lemma~\ref{lem:V}, the hypotheses of~\cite[Theorem~5.1]{LMMsub} are
satisfied.

Notice that $I_v=0$ if and only if $J\equiv0$, equivalently $\sigma_J^2=0$. 
If $I_v\neq0$, then we are in the situation of~\cite[Theorem~5.1(a)]{LMMsub},
completing the proof of Theorem~\ref{thm:WIP}. This implies the nonstandard CLT in the first statement of Theorem~\ref{thm:CLT}.

If $I_v=0$, then we are in the situation of~\cite[Theorem~5.1(b)]{LMMsub}, proving the standard CLT in the second statement of Theorem~\ref{thm:CLT}.
\end{pfof}

\section{Decay of correlations for surfaces with flat cylinder}
\label{sec:decay}

In this section, we consider decay of correlations for the geodesic flow $g_t:M\to M$ and the global Poincar\'e map $g:\Sigma\to\Sigma$.
The general setup is the same as in~\cite[Section~7]{LMM24}.
We recall the basics here.\footnote{We caution that for example $\tau$ here was called $\sigma$ in~\cite{LMM24}.}
\begin{enumerate}[$\bullet$]
\item The flow $g_t$ and map $g$ are related by the identity
$g=g_h$ where $h:\Sigma\to\R^+$ is a H\"older roof function with $0<\inf h<\sup h<\infty$.
\item 
The first return maps $g:\Sigma\to\Sigma$ and $f:\Sigma_0\to\Sigma_0$ are related by $f=g^R$ where $R:\Sigma_0\to\Z^+$ is a discrete first return time.
\item 
The uniformly hyperbolic first return map $f:\Sigma_0\to\Sigma_0$ is modelled by a Young tower with exponential tails~\cite{Young98}: there is a subset $Y\subset\Sigma_0$ with product structure and bounded distortion, a probability measure $\mu_Y$ on $Y$, and a ``good'' return time $\tau:Y\to\Z^+$ constant along stable leaves with $\mu_Y(\tau>n)=O(e^{-cn})$ for some $c>0$, such that $f^\tau:Y\to Y$ is measure-preserving. Moreover, quotienting along stable leaves yields a full-branch Gibbs-Markov map $F:Z\to Z$ as in Section~\ref{sec:CLT}, and $\tau$ is constant on the associated partition elements.
\item 
The global Poincar\'e map $g:\Sigma\to\Sigma$ is modelled by a subexponential Young tower over $g^{\varphi_*}=(g^R)^\tau=f^\tau:Y\to Y$ where the return time $\varphi_*:Y\to\Z^+$ is given by $\varphi_*=\sum_{\ell=0}^{\tau-1}R\circ f^\ell$.
\item 
We can regard $f^\tau:Y\to Y$ as a (non-first return) Poincar\'e map for the flow $g_t$ with roof function $\varphi=h_{\varphi_*}=\sum_{\ell=0}^{\varphi_*-1}h\circ g^\ell$.
\end{enumerate}

By Lemma~\ref{lem:RC}(1), we have precise asymptotics for the tails of 
$R_\cC:\Sigma_0\to\Z^+$. In particular,
$\mu_{\Sigma_0}(R_\cC>n)\sim \sigma_R^2 n^{-2}$.
Usually, it is difficult to leverage this into precise asymptotics for $\varphi_*:Y\to\Z^+$.
However, here we can exploit the fact that $R$ satisfies a nonstandard limit law by following ideas in~\cite{Gouezel10,MV20}.

\begin{prop} \label{prop:tail}
$\mu_Y(\varphi_*>n)\sim\bar\tau\sigma_R^2 n^{-2}$.
\end{prop}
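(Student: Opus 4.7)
The plan is to establish a ``one big jump'' (Nagaev-type) principle for $\varphi_*=\sum_{\ell=0}^{\tau-1}R\circ f^\ell$: the dominant contribution to $\{\varphi_*>n\}$ comes from a single anomalously large return $R\circ f^\ell$ within the excursion of length $\tau$.

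First I derive the asymptotic tail of $R$. By Lemma~\ref{lem:RC}(1), $\mu_{\Sigma_0}(R_\cC>n)\sim\sigma_R^2 n^{-2}$, while Lemma~\ref{lem:RC}(2) gives $R-R_\cC=O(R_\cN)$ with $R_\cN\in L^p$ for some $p>2$, so $\mu_{\Sigma_0}(|R-R_\cC|>\eps n)=o(n^{-2})$ for every $\eps>0$ and hence $\mu_{\Sigma_0}(R>n)\sim\sigma_R^2 n^{-2}$. The Young tower construction supplies the Kac-type identity
\[
\int_Y\sum_{\ell=0}^{\tau-1}\phi\circ f^\ell\,d\mu_Y=\bar\tau\int_{\Sigma_0}\phi\,d\mu_{\Sigma_0},
\]
which applied to $\phi=1_{\{R>m\}}$ yields $\int_Y N_m\,d\mu_Y=\bar\tau\mu_{\Sigma_0}(R>m)\sim\bar\tau\sigma_R^2 m^{-2}$, where $N_m=\#\{0\le\ell<\tau:R\circ f^\ell>m\}$.

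For the lower bound, $\{\varphi_*>n\}\supset\{N_n\ge1\}$ and Bonferroni gives $\mu_Y(N_n\ge1)\ge\int N_n\,d\mu_Y-\tfrac12\int N_n(N_n-1)\,d\mu_Y$. The pair term is controlled using $\mu_Y|_Y=\bar\tau\mu_{\Sigma_0}|_Y$ and $f$-invariance of $\mu_{\Sigma_0}$ to reduce to $\sum_{p\ge1}\mu_{\Sigma_0}(R>n,\,R\circ f^p>n)$; each joint tail is $O(n^{-4})$ by summing Lemma~\ref{lem:pairs}, and exponential tails of $\tau$ effectively cut the sum off at $p\approx\log n$, yielding $O((\log n)^2 n^{-4})=o(n^{-2})$. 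For the upper bound I split
\[
\mu_Y(\varphi_*>n)\le\mu_Y(N_{(1-\eps)n}\ge1)+\mu_Y(\varphi_*>n,\text{ all returns}\le (1-\eps)n),
\]
the first term being $\le(1-\eps)^{-2}\bar\tau\sigma_R^2 n^{-2}(1+o(1))$ by Markov and Kac.

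The main obstacle is controlling the second term above. On this event, at least two returns must be comparable to $n$. I would decompose according to the largest return: if one return lies in $(n/2,(1-\eps)n]$ then a second must exceed $\eps n$, and joint-tail estimates analogous to those in the lower bound give an $O(n^{-4}\log n)=o(n^{-2})$ contribution; if no return exceeds $n/2$ but the sum exceeds $n$, a multi-scale $L^p$ truncation argument with $p>2$, combined with the borderline $\int_{R\le p} R^2\,d\mu\approx\log p$ estimate underlying Proposition~\ref{prop:JR}, should yield a contribution $c(\eps)n^{-2}$ with $c(\eps)\to0$ as $\eps\to0$. Letting $\eps\to0$ matches the bounds. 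The delicate point is that we are in the critical regime $\alpha=2$ where $R$ has just barely infinite variance, so naive moment bounds give only polylogarithmic losses; the necessary sharpness comes from exploiting the precise asymptotic on the truncated second moment of $R$, rather than just an upper bound.
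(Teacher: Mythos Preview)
Your approach is entirely different from the paper's and substantially harder. The paper's proof is two lines: by Corollary~\ref{cor:RCLT}, $R$ already satisfies a nonstandard CLT on the tower with variance $\sigma_R^2>0$, and then \cite[Lemma~4.1]{LMMsub} is invoked as a black box that converts a nonstandard CLT for $R$ into the precise tail asymptotic $\mu_Y(\varphi_*>n)\sim\bar\tau\sigma_R^2 n^{-2}$. In other words, the paper deliberately routes through the limit theorem to avoid the direct large-deviation analysis you attempt.

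Your one-big-jump outline is plausible in spirit, but the part you yourself flag as ``delicate'' is a genuine gap. In the critical regime $\alpha=2$, controlling $\mu_Y(\varphi_*>n,\ \max_\ell R\circ f^\ell\le n/2)$ to $o(n^{-2})$ is not a matter of a routine $L^p$ truncation: the truncated variance grows like $\log n$, so Chebyshev/Bernstein give only $O(n^{-2}\log n)$, and even Fuk--Nagaev does not close the gap without additional structure. In the i.i.d.\ case this is a known but nontrivial theorem; here you also have dependence, and Lemma~\ref{lem:pairs} only controls \emph{pairs} of large values of $R_{\cC}$, not the higher correlations needed for a full multiscale argument. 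You have not indicated how you would handle this, and the paper's route via the nonstandard CLT is precisely what circumvents it.

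Two smaller points: (i) the identity ``$\mu_Y|_Y=\bar\tau\mu_{\Sigma_0}|_Y$'' is false in general since $\tau$ is not a first return; only the Kac-type identity you wrote earlier is valid, and you should use that throughout. (ii) Lemma~\ref{lem:pairs} is stated for $R_{\cC}$, not $R$; passing to $R$ via $R-R_{\cC}\in L^p$ requires an extra step that you do not supply.
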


\begin{proof} 
By Corollary~\ref{cor:RCLT}, $R$ 
satisfies a nonstandard CLT
with variance $\sigma_R^2>0$.
Now apply~\cite[Lemma~4.1]{LMMsub}.
\end{proof}

We can now state and prove our result on upper and lower bounds on decay of correlations for the global Poincar\'e map $g$ as advertised in Remark~\ref{rmk:lower}.

\begin{thm} \label{thm:decaymap}
For all H\"older observables $v,w:\Sigma\to\R$, there is a constant
$C>0$ such that 
$$
\left|\int_{\Sigma} v \cdot (w\circ g^n)\,d\mu_{\Sigma}-\int_{\Sigma}v\,d\mu_{\Sigma}\int_{\Sigma}w\,d\mu_{\Sigma}\right|
\leq C n^{-1}
\quad\text{for all $n\ge1$}.
$$
Moreover, for all H\"older observables $v,w:\Sigma\to\R$  supported in $\Sigma_0$,
\[
\int_{\Sigma} v \cdot (w\circ g^n)\,d\mu_{\Sigma}-\int_{\Sigma}v\,d\mu_{\Sigma}\int_{\Sigma}w\,d\mu_{\Sigma}\sim \bar\tau\sigma_R^2
n^{-1} \int_\Sigma v\,d\mu_\Sigma\, \int_\Sigma w\,d\mu_\Sigma \quad\text{for all $n\ge1$}.
\]
\end{thm}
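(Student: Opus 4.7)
\textbf{Proof plan for Theorem~\ref{thm:decaymap}.}
The plan is to exploit the representation of the global Poincar\'e map $g:\Sigma\to\Sigma$ as a subexponential Young tower over the uniformly hyperbolic base $f^\tau:Y\to Y$ with return time $\varphi_*:Y\to\Z^+$, together with the sharp tail asymptotic $\mu_Y(\varphi_*>n)\sim\bar\tau\sigma_R^2\,n^{-2}$ furnished by Proposition~\ref{prop:tail}. Since $f^\tau$ descends along stable leaves to a full-branch Gibbs-Markov map $F:Z\to Z$ with exponentially-tailed $\tau$, the operator renewal framework of \cite{Gouezel10,MV20} applies. A routine approximation argument will first reduce a H\"older pair $(v,w)$ on $\Sigma$ to a pair that is constant on stable leaves within each tower column, at the cost of an exponentially small remainder.

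For the upper bound on all of $\Sigma$, standard polynomial decay-of-correlations estimates for Young towers with summable tails of the return time give
\[
\left|\int_\Sigma v\cdot(w\circ g^n)\,d\mu_\Sigma-\int_\Sigma v\,d\mu_\Sigma\int_\Sigma w\,d\mu_\Sigma\right|
\ll \|v\|\|w\|\sum_{k>n}\mu_Y(\varphi_*>k)\ll n^{-1},
\]
using the $O(n^{-2})$ tail of $\varphi_*$. This yields the first assertion without any support hypothesis.

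For the sharp asymptotic under the support condition, I would invoke the second-order operator renewal identity specialised to the critical exponent $\beta=2$. In this regime the leading correction is of order $\sum_{k>n}\mu_Y(\varphi_*>k)$, and a careful accounting of the Kac-type normalisations in passing from $\mu_Y$ to $\mu_\Sigma$ produces an expansion of the form
\[
\int_\Sigma v\cdot(w\circ g^n)\,d\mu_\Sigma-\int_\Sigma v\,d\mu_\Sigma\int_\Sigma w\,d\mu_\Sigma
= C_n\int_\Sigma v\,d\mu_\Sigma\int_\Sigma w\,d\mu_\Sigma + o(n^{-1}),
\]
with $C_n\sim \bar\tau\sigma_R^2\,n^{-1}$ by Proposition~\ref{prop:tail}. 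The hypothesis that $v,w$ are supported in $\Sigma_0$ is essential to control the induced observables at the top of each tower column; without it, observables concentrated at unboundedly high levels can contribute additional terms of order $n^{-1}$ that alter the leading coefficient.

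The main obstacle is the verification of the aperiodicity hypothesis required by the renewal theorem, namely that $\varphi_*$ is not cohomologous (modulo a constant) to a $\Z$-valued observable with arithmetic range. This should follow from the continuity of the transition time $\Upsilon$ as a function of the Clairaut parameter within each homogeneity band $\mfD_n$, which forces $\varphi_*$ to take a continuum of incommensurable values; the nondegeneracy $\sigma_R^2>0$ implicit in Corollary~\ref{cor:RCLT} provides the quantitative non-arithmetic input. Additional, more routine technical steps include the passage from H\"older regularity of $v,w$ on $\Sigma$ to suitable regularity of their induced counterparts on the Banach space supporting the renewal transfer operator, which is standard for Young towers with exponentially-tailed $\tau$.
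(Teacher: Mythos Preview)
Your high-level strategy is the same as the paper's: model $g:\Sigma\to\Sigma$ by a Young tower over $f^\tau:Y\to Y$ with return time $\varphi_*$, feed in the tail asymptotic $\mu_Y(\varphi_*>n)\sim\bar\tau\sigma_R^2 n^{-2}$ from Proposition~\ref{prop:tail}, and read off polynomial decay. The paper's proof is, however, much terser: for the upper bound it simply invokes \cite{Young99} (with complete versions in \cite[Theorem~2.10]{KKM19} or \cite[Appendix~B]{MT14}), and for the sharp lower bound it invokes \cite[Theorem~7.4]{BMT21}, rather than redeveloping the operator-renewal machinery you sketch. Your reduction to observables constant along stable leaves and the $\sum_{k>n}\mu_Y(\varphi_*>k)$ bound are exactly what those references encode.

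Where your proposal goes astray is the aperiodicity paragraph. The return time $\varphi_*:Y\to\Z^+$ is integer-valued by construction, so it cannot ``take a continuum of incommensurable values'', and the continuity of the flow-time $\Upsilon$ in the Clairaut parameter is irrelevant here; you appear to be conflating $\varphi_*$ with the real-valued roof $\varphi=h_{\varphi_*}$ used for the flow in Theorem~\ref{thm:decay}. For the discrete map, the relevant condition is simply mixing of the tower map $f_\Delta$, i.e.\ $\gcd\{\varphi_*\}=1$, which is part of the standing Young-tower hypotheses (and is automatic once one knows $g$ is mixing, as follows from the Bernoulli property of the flow). Likewise, the nondegeneracy $\sigma_R^2>0$ has nothing to do with aperiodicity; it enters only through the tail constant in Proposition~\ref{prop:tail}. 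Once you correct this, your route through renewal theory is a legitimate alternative to citing \cite{BMT21}, but it is more work and gives no more.
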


\begin{proof} 
Recall that the global Poincar\'e map $g:\Sigma\to\Sigma$ is modelled by a (subexponential) two-sided Young tower $\Gamma$ with return time $\varphi_*:Y\to\Z^+$.
By Proposition~\ref{prop:tail}, $\mu_Y(\varphi_*>n)\sim \bar\tau\sigma_R^2n^{-2}$. 
Hence the upper bound follows from~\cite{Young99} (a complete argument can be found in~\cite[Theorem~2.10]{KKM19} or~\cite[Appendix~B]{MT14}) and the lower bound from~\cite[Theorem~7.4]{BMT21}.
\end{proof}

\begin{lemma} \label{lem:varphi}
There is a constant $C>0$ such that
$|\varphi(y)-\varphi(y')|\le C d(y,y')$ for $y,y'\in Y$ with $y'\in W^s_y$, and
$|\varphi(y)-\varphi(y')|\le C d(f^\tau y,f^\tau y')$ for $y,y'\in Y$ with $y'\in W^u_y$.
\end{lemma}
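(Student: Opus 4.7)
The plan is to rewrite $\varphi$ as an ergodic sum of the first-return time $\Upsilon$ under the uniformly hyperbolic map $f=g_\Upsilon$, and then apply Lemma~\ref{lem:Ups} termwise. Since $\Upsilon=h_R=\sum_{j=0}^{R-1}h\circ g^j$ is the flow time of one $f$-iterate and $\varphi_*=\sum_{\ell=0}^{\tau-1}R\circ f^\ell$, the two expressions for the total flow time to the first return to $Y$ agree, yielding
$$
\varphi=\sum_{\ell=0}^{\tau-1}\Upsilon\circ f^\ell.
$$
By construction of the Young tower over $f$, the return time $\tau$ is constant on the partition elements of $Y$, each of which is a union of local stable manifolds. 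Hence $\tau(y)=\tau(y')$ in both the stable and unstable cases. Moreover, this construction ensures that for $0\le \ell<\tau$, the pair $(f^\ell y,f^\ell y')$ lies in a single element of the natural partition of $\Sigma_0$ generated by $\mfS^+\cup\mfS^-$; in particular, both points fall into the same $\mfD_n^\gtrless$ when in $\Sigma_{\rm in}$ (and the associated geodesics are of matching bouncing/crossing type), or both fall into $\Sigma_0\setminus\Sigma_{\rm in}$.

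With these matching conditions in force, Lemma~\ref{lem:Ups} applies termwise on $\Sigma_{\rm in}$, giving
$$
|\Upsilon(f^\ell y)-\Upsilon(f^\ell y')|\ll d(f^\ell y,f^\ell y')+d(f^{\ell+1}y,f^{\ell+1}y').
$$
On $\Sigma_0\setminus\Sigma_{\rm in}$, the function $\Upsilon$ is bounded and smooth (inheriting the regularity of the flow), so the same bound holds (with the second summand redundant). Summing over $0\le \ell<\tau$ telescopes to
$$
|\varphi(y)-\varphi(y')|\ll\sum_{\ell=0}^{\tau}d(f^\ell y,f^\ell y').
$$
Uniform hyperbolicity of $f$ from the Chernov axioms (Theorem~\ref{thm:Chernov}) delivers the geometric estimate $d(f^\ell y,f^\ell y')\le K\lambda^\ell d(y,y')$ for $y'\in W^s_y$ and some $\lambda\in(0,1)$, making the sum $\ll d(y,y')$. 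For $y'\in W^u_y$, reversing time gives $d(f^\ell y,f^\ell y')\le K\lambda^{\tau-\ell}d(f^\tau y,f^\tau y')$, so the sum is $\ll d(f^\tau y,f^\tau y')$.

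The main obstacle is really the second step: verifying that the iterates $f^\ell y$, $f^\ell y'$ remain in a single partition element (in particular, the same homogeneity band with matched excursion type) for all $0\le \ell<\tau$, which is what licenses Lemma~\ref{lem:Ups} with a uniform constant rather than a constant that degenerates near the asymptotic set. This is, however, a built-in feature of the Young tower over a Chernov system: the ``core'' $Y$ is designed precisely so that the first $\tau$ $f$-iterates of points on a common $s$- or $u$-leaf avoid crossing the primary/secondary singular sets $\mfS^\pm$, cf.\ \cite{Chernov-1999,Young98,LMM24}. Given this, the three steps above assemble into the required Lipschitz bounds and complete the lemma.
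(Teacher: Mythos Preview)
Your proof is correct and follows essentially the same approach as the paper, which simply cites Lemma~\ref{lem:Ups} and refers to the analogous argument in \cite[Lemma~7.1]{LMM24}; you have faithfully unpacked that argument by writing $\varphi=\sum_{\ell=0}^{\tau-1}\Upsilon\circ f^\ell$, applying Lemma~\ref{lem:Ups} termwise, and summing the resulting geometric series along stable/unstable leaves.
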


\begin{proof} This follows from Lemma~\ref{lem:Ups} in the same way 
that~\cite[Lemma~7.1]{LMM24} follows from~\cite[Lemma~5.11]{LMM24}.
\end{proof}

\begin{pfof}{Theorem~\ref{thm:decay}}
In the terminology of~\cite[Section~6]{BBM19}, we have shown that $g_t$ is a ``Gibbs-Markov flow''. Their condition~(H) follows from Lemma~\ref{lem:varphi} by~\cite[Lemma~8.3]{BBM19}.
As explained in~\cite[Section~7]{LMM24}, the ``absence of approximate eigenfunctions'' condition in~\cite{BBM19} is automatic due to the fact that the geodesic flow $g_t$ has a contact structure.
Since $h$ is bounded above and below, it follows from Proposition~\ref{prop:tail}  that $\mu_Y(\varphi>t)\approx t^{-2}$.
Hence we can apply~\cite[Theorem~6.4]{BBM19}.
\end{pfof}

\appendix

\section{Chernov axioms}
\label{app:C}

For convenience, we list the Chernov axioms as stated in~\cite[Section 3]{LMM24}. A more in-depth discussion of the axioms, and where they differ from treatments in~\cite{Balint-Toth,Chernov-1999,Chernov-Zhang}, is given in~\cite{LMM24} and not repeated here.

\medskip
\noindent
{\bf (A1)} Let $(\Sigma_0,d)$ be a compact $C^2$ Riemannian surface.
We let $\mfS^+,\mfS^-$
be closed subsets of $\Sigma_0$, and let
$f:\Int \Sigma_0\setminus \mfS^+\to \Int\Sigma_0\setminus\mfS^-$ be a $C^2$ diffeomorphism.

\medskip
For $n\ge 1$, define
$\mfS_n=\mfS^+\cup f^{-1}\mfS^+\cup\cdots\cup f^{-n+1}\mfS^+$.

\medskip
\noindent
{\bf (A2)} There are two families of cones $\{C^u_x\},\{C^s_x\}$ in
the tangent planes $T_x\Sigma_0$, $x\in \Sigma_0$, called unstable and stable cones,
and a constant $\Lambda>1$ such that:
\begin{enumerate}[$\bullet$]

\parskip=-2pt
\item $\{C^u_x\},\{C^s_x\}$ are continuous on $\Sigma_0$.
\item The axes of $\{C^u_x\},\{C^s_x\}$ are one-dimensional.
\item $\min\limits_{x\in \Sigma_0}\angle (C^u_x,C^s_x)>0$.
\item $Df(C^u_x)\subset C^u_{fx}$ and $Df(C^s_x)\supset C^s_{fx}$
whenever $Df$ exists.
\item 
$\|Df(v^u)\|\geq\Lambda\|v^u\|$ for all $v^u\in C^u_x$, and 
$\|Df^{-1}(v^s)\|\geq\Lambda\|v^s\|$ for all $v^s\in C^s_x$.
\end{enumerate}

Given a curve $W\subset \Sigma_0$, let $d_W$ be the Riemannian metric on $W$ induced by $d$.
Similarly, let $m_W$ be Lebesgue measure on $W$ induced by Lebesgue measure $m$ on ${\Sigma_0}$.

Recall the definition of LUM/LSM in Subsection~\ref{ss:hyperbolicity-f}.
Let $W_1,W_2$ be sufficiently
small and close enough LUMs such that small LSMs intersect each of $W_1,W_2$ at most once.
Let $W'_1=\{x\in W_1:W^s_x\cap W_2\neq\emptyset\}$ and $H:W'_1\to W_2$ be the \emph{holonomy map},
 i.e.\ $H(x)$ is the unique intersection between $W^s_x$ and $W_2$.
Also, let $\Lambda(x)=|\det (Df|_{T_xW^u_x})|$.

\medskip
\noindent
{\bf (A3)} 
The angles between $\mfS^+$ and LUMs, 
and between $\mfS^-$ and LSMs, are bounded
away from zero.

\medskip
\noindent
{\bf (A4)} 
The map $f$ preserves an ergodic volume measure $\mu_{\Sigma_0}$
such that a.e.\ $x\in \Sigma_0$ has an LUM $W^u_x$ and the conditional measure on $W^u_x$
induced by $\mu_{\Sigma_0}$ is absolutely continuous with respect to $m_{W^u_x}$. Furthermore,
$f^n$ is ergodic for all $n\geq 1$.

\medskip
\noindent
{\bf (A5)} The leaves $W^{u/s}_x$ are uniformly $C^{1+{\rm Lip}}$.

\medskip
\noindent
{\bf (A6)} 
There is a function $\psi:[0,\infty)\to[0,\infty)$ with $\lim_{x\to0}\psi(x)=0$
for which the following holds: if $W$ is an LUM, then for all $x,y$ belonging to the same connected component $V$
of $W\cap \mfS_{n-1}$,
$$
\log\left[\prod_{i=0}^{n-1}\dfrac{\Lambda(f^i x)}{\Lambda(f^i y)}\right]\leq \psi\big(d_{f^nV}(f^n x,f^n y)\big).
$$

\noindent
{\bf (A7)} There is a constant $C>0$ with the following property:
if $W_1,W_2$ are two sufficiently small and close enough LUMs,
then the holonomy map $H:W_1'\to W_2$ is absolutely continuous with respect to $m_{W_1},m_{W_2}$
and
$$
C^{-1}\leq \frac{m_{W_2}(H[W_1'])}{m_{W_1}(W_1')} \leq C.
$$

\noindent
{\bf (A8)}
Write $\mfS_1=\mfS^+$ as a disjoint union $\mfS_1=\mfS^+_{\rm P}\cup \mfS^+_{\rm S}$,
and let 
\[
\mfS^+_{{\rm P},n}=\mfS^+_{\rm P}\cup f^{-1}\mfS^+_{\rm P}\cup\cdots\cup f^{-n+1}\mfS^+_{\rm P}.
\]
We assume:
\begin{enumerate}[$\bullet$]

\parskip=-2pt
\item 
For each LUM $W$, the intersection $W\cap \mfS_1$ is at most countable and
$W\cap\mfS_1$ has at most one accumulation point $x_\infty$.
Moreover, there are constants $C,d>0$
such that if $x_n$ is the monotone sequence in $W\cap\mfS_1$ converging to $x_\infty$ then
$d(x_n,x_\infty)\leq Cn^{-d}$ for all $n\geq 1$.
\item 
Let $\Lambda_n=\min\{\Lambda(x):x\in W_n\}$ where
$\{W_n\}$ are the connected components of $W\setminus \mfS^+_{\rm S}$; then
$$
\theta_0=\liminf_{\delta\to 0}\sup_{|W|<\delta}\sum_n \Lambda_n^{-1}<1,
$$
where the supremum is taken over LUMs $W$ with $W\cap\mfS^+_{\rm P}=\emptyset$.
\item Let
$
K_n=\lim_{\delta\to 0}\sup_{|W|<\delta}K_n(W)
$
where the supremum is taken over LUMs $W$ and $K_n(W)$ is the number
of connected components of $W\setminus \mfS^+_{{\rm P},n}$; then
$K_n<\min\{\theta_0^{-1},\Lambda\}^n$ for some $n\geq 1$.
\end{enumerate}

\paragraph{Acknowledgements}
 YL was supported by CNPq/MCTI/FNDCT project 406750/2021-1,
FUNCAP grant UNI-0210-00288.01.00/23, Instituto Serrapilheira grant
``Jangada Din\^{a}mica: Impulsionando Sistemas Din\^{a}micos na Regi\~{a}o Nordeste'', and
FAPESP grant number 2025/11400-7.
The research of IM was supported in part by FAPESP grant number 2024/22093-5.
The authors are grateful for the hospitality of Universidade Federal do Ceará (UFC), where
much of this work was carried out.

\end{document}